\documentclass[11pt]{amsart}
\usepackage {amsmath, amssymb, amscd, mathrsfs, amsthm, stmaryrd,  bbm, diagbox, enumerate, slashed, graphicx, color, subfig, transparent, comment, float}
\usepackage[dvipsnames]{xcolor}
\usepackage{url, hyperref}
\usepackage{footmisc}
\usepackage{tikz-cd}
\usepackage[margin=1.2in]{geometry}

\newtheorem{theorem}{Theorem}[section]
\newtheorem{proposition}[theorem]{Proposition}
\newtheorem{corollary}[theorem]{Corollary}
\newtheorem{lemma}[theorem]{Lemma}

\newtheorem{definition}[theorem]{Definition}
\theoremstyle{remark}
\newtheorem{example}[theorem]{Example}
\theoremstyle{definition}

\newtheorem {remark}[theorem]{Remark}

\newcommand{\CPbone}{{\overline{\mathbb{CP}^2}}}
\newcommand{\CPar}[1]{{\#^{#1}{\mathbb{CP}^2}}}
\newcommand{\CPbar}[1]{{\#^{#1}\overline{\mathbb{CP}^2}}}
\newcommand{\punc}{\setminus{\rm int}(B^4)}
\newcommand{\pint}[1]{\setminus{\rm int}(#1)}
\newcommand{\Sq}{{\rm Sq}}
\newcommand{\odd}{\rm odd}
\newcommand{\even}{\rm even}

\begin{document}
\title{slicing degree of knots}
\author{Qianhe Qin}
\address {Department of Mathematics, Stanford University\\
Stanford, California, 94305, United States of America}
\email {\href{mailto:qqhe@stanford.edu}{qqhe@stanford.edu}}

\begin{abstract}
The slicing degree of a knot $K$ is defined as the smallest integer $k$ such that $K$ is $k$-slice in {$\CPbar{n}$} for some $n$. In this paper, we establish bounds for the slicing degrees of knots using Rasmussen's $s$-invariant, knot Floer homology and singular instanton homology. We compute the slicing degree for many small knots (with crossing numbers up to $9$) and for some families of torus knots.
\end{abstract}
\maketitle

%%%%%%%%%%%%%%%%%%%%%%%%%%%%%%%%%%%%%%%%%%%%%%%%%%%%%%%%%%%%%%%%%%%%%%%%%%%%%%%%%%%%%%%%%%%%%%%%%%%%%%%%%%%%%%%%%%%%%%%%%%%%
\section{Introduction}

RBG links, developed in \cite{MP21} (see also \cite{akbulut_1977},\cite{akbulut_1993}), are a tool for generating knot pairs with the same $0$-surgery, with the goal of finding an exotic {\small $ \CPar{n}$} for some $n$. Suppose there exists a knot pair $(K,K')$ associated to some RBG link, such that $K$ is $H$-slice in {\small $ \CPar{n}$} and $K'$ is not $H$-slice in {\small $ \CPar{n}$}. Then, we can build a $4$-manifold $X'$ by cutting-and-pasting the zero traces of the knot pair $(K,K')$ using their $0$-surgery homeomorphism. It is not hard to check that $X'$ is homeomorphic to {\small $ \CPar{n}$} by the classification of simply-connected $4$-manifolds \cite{Freedman} and $X'$ is not diffeomorphic to {\small $ \CPar{n}$} by the trace embedding lemma \cite{19trace}.  

The RBG technique is generalized to $k$-RBG links in \cite{Qin}, and this can be used to generate knot pairs that share the same $k$-surgeries. A knot $K$ is called $k$-slice if it bounds a smoothly embedded disk with self-intersection number $-k$ in {\small $ \CPbar{n}$} for some $n$. We can construct an exotic {\small $ \CPbar{n}$} if there exist knot pairs associated to a special kind of $k$-RBG link, such that $K$ is $k$-slice in {\small $ \CPbar{n}$} and $K'$ is not $k$-slice in {\small $ \CPbar{n}$}. For instance, if the knot $K_B(3)$ in Figure \ref{KBmKG} is $3$-slice, then an exotic {\small $ \CPbar{n}$} exists for some $n$ (see \cite[Theorem 1.5]{Qin}). Thus, we need more information on either obstructing or establishing the $k$-sliceness of knots. 

In this paper, we will provide methods to compute a knot concordance invariant of knots called {\em slicing degree}, which was originally defined by Nakamura in \cite{Kai}. (In \cite{Kai}, this was called the {\em positive projective slice framing}, denoted $\mathbb{PF}_+(K)$.)
\begin{definition}[Definition 2.9 \cite{Kai}]
Let $K$ be a knot in $S^3$. The {\em (positive) slicing degree} $sd_+(K)$ of $K$ is 
$$
min\left\{k\in\mathbb{Z}_{\ge 0}\left\vert\; \text{$K$ is $k$-slice in {\small $ \CPbar{n}$} for some $n$}\right\}\right. .
$$
\end{definition}
 
One can obtain upper bounds on the slicing degree of a knot $K$ from an unknotting sequence of negative generalized crossing changes (i.e. adding negative twists on parallel stands) on a knot diagram of $K$ (see \cite[Lemma 2.8]{Kai}). Moreover, since the slicing degree is a concordance invariant, we can strengthen the upper bound by combining unknotting operations and knot concordances. 
\begin{proposition} \label{prop:cs+}
Let $c_s^+(K)$ be the positive clasp number of $K$. Then,
$$sd_+(K)\le 4 \cdot c_s^+(K).$$
\end{proposition}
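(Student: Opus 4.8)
The plan is to start from a disk realizing the positive clasp number and to resolve its double points by blow-ups, keeping careful track of the normal Euler number. By definition of $c_s^+(K)$, there is a generically immersed disk $D\hookrightarrow B^4$ with $\partial D=K$ all of whose double points are positive, exactly $p:=c_s^+(K)$ of them (after possibly replacing $K$ by a concordant knot and invoking the concordance invariance of $sd_+$, which is where concordances enter, as in the remark preceding the statement). My first step is to resolve each positive double point by a complex blow-up: near such a point $D$ looks like two sheets meeting positively, so performing the oriented blow-up of $B^4$ there adds a $\CPbar{1}$ summand whose exceptional sphere $E_i$ satisfies $E_i^2=-1$, and the proper transform separates the two sheets. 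Carrying this out at all $p$ double points yields an embedded disk $\widetilde D$ with $\partial\widetilde D=K$ inside $\CPbar{p}\punc$, and the boundary framing (the Seifert framing of $K$) is untouched because every modification is supported in the interior.

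The second step is the Euler-number bookkeeping, which is where the factor $4$ appears. Each double point has local multiplicity $2$, so $\widetilde D$ meets each $E_i$ transversally in two positive points; capping $K$ off by a Seifert surface pushed into the reglued ball (a cap of vanishing relative normal Euler number) produces a closed surface whose homology class in $H_2(\CPbar{p})=\bigoplus_i\mathbb{Z}\langle h_i\rangle$ equals $\sum_i(-2)h_i$, using $h_i^2=-1$ together with $\widetilde D\cdot E_i=+2$. Hence the self-intersection number of $\widetilde D$ is $\sum_{i=1}^{p}4\,h_i^2=-4p$, so $K$ is $4p$-slice in $\CPbar{p}$ and $sd_+(K)\le 4p=4\,c_s^+(K)$. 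Equivalently, one may phrase each resolution as undoing a clasp by a negative generalized crossing change on its two strands, which are coherently oriented and therefore have linking number $2$ with the twisting circle; Lemma 2.8 of \cite{Kai} then contributes $2^2=4$ per double point, giving the same bound.

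The main thing to get right, and the only real subtlety, is this Euler-number computation: I must verify that resolving a positive double point of multiplicity $2$ drops the self-intersection by exactly $2^2=4$ rather than by $1$, and that the proper transform is an honest embedded disk (the genus is not raised and the two sheets genuinely separate). A secondary point is that positivity of every double point is precisely what guarantees that only $\CPbar{1}$ summands are needed, so that the ambient manifold is $\CPbar{p}$ and the resulting self-intersection is $\le 0$, as required for a genuine slicing degree; a negative double point would instead force a $\mathbb{CP}^2$ summand and raise the self-intersection, which is why the positive clasp number, rather than the full clasp number, governs this bound.
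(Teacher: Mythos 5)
Your blow-up resolution of the positive double points is correct and is essentially the paper's own argument (the paper phrases each blow-up as gluing in a $\CPbone\punc$ in which the Hopf link of a positive double point bounds two disjoint disks of total class $(2)$, which is exactly your proper-transform computation $[\widetilde D]=\pi^*[D]-2E$ costing $(-2)^2E^2=-4$ per point). However, there is a genuine gap at your very first step: you assume a disk realizing $c_s^+(K)$ has \emph{all} double points positive. The positive clasp number is the minimum, over all normally immersed disks in $B^4$ bounded by $K$, of the number of positive double points only; the minimizing disk may carry arbitrarily many negative double points, which are simply not counted. This is essential to how the invariant is used in the paper: for instance, the mirrors of the tabulated knots are unknotted by changing only negative crossings, so they have $c_s^+=0$ and slicing degree $0$ even though they are not slice and every immersed disk for them has double points; likewise $8_{18}$ is handled with one positive and one negative double point. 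As written, your argument only bounds $sd_+(K)$ by four times the minimal number of double points among disks \emph{all} of whose double points are positive, which is in general strictly larger than $c_s^+(K)$.

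Compounding this, your closing justification --- that a negative double point ``would force a $\mathbb{CP}^2$ summand and raise the self-intersection'' --- is false, and seeing why is precisely how the proof is repaired. A negative double point can also be resolved inside a punctured $\CPbone$ at zero homological cost: blowing up there, the two sheets' proper transforms meet the exceptional sphere with opposite signs, so the relative class changes by $E-E=0$ and the self-intersection is unchanged; in the paper's phrasing, the Hopf link arising at a negative double point bounds two disjoint \emph{null-homologous} disks in $\CPbone\punc$. The correct reason $c_s^+$ rather than $c_s$ governs the bound is that negative double points cost nothing, not that they cannot be accommodated. With this addition your Euler-number bookkeeping goes through verbatim and produces an embedded disk of self-intersection $-4c_s^+(K)$ in $\CPbar{n}$, where $n$ is the total number of double points; this suffices because $n$ is unconstrained in the definition of $sd_+$. (A minor point: your appeal to concordance invariance is unnecessary for $c_s^+$ --- the paper needs it only for the bound by the full clasp number $c_s$ via Owens--Strle --- but it is harmless.)
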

Since the positive clasp number is bounded above by the unknotting number, the slicing degree is a finite number for each knot.

On the other hand, we can use knot invariants from Khovanov homology, knot Floer homology and singular instanton homology to bound the slicing degree from below. 

First, the adjunction inequality for the $s$-invariants \cite[Corollary 1.5]{Ren} gives an obstruction for a knot $K$ to bound a disk in {\small $ \CPbar{n}$}.
\begin{theorem}\label{prop:sinv}
If $K$ bounds a disk $D$ in {$ \CPbar{n}\punc$} such that $[D]=(a_1,\cdots, a_n)$ in $H^2(\CPbar{n}; \mathbb{Z})$, then
$$
s_p(K) \le \sum_{i=1}^na_i^2 - \sum_{i=1}^n|a_i|,
$$
where $s_p(K)$ denotes the $s$-invariant over characteristic $p$. 
\end{theorem}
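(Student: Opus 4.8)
The plan is to obtain this statement as the genus-zero specialization of the adjunction inequality for the $s$-invariant, \cite[Corollary 1.5]{Ren}. Recall that inequality in the form relevant here: if a knot $K\subset S^3=\partial\big(\CPbar{n}\punc\big)$ bounds a connected, oriented, properly embedded surface $\Sigma$ in $\CPbar{n}\punc$ carrying the class $[\Sigma]=(a_1,\dots,a_n)\in H^2(\CPbar{n};\mathbb{Z})$, then
\[
s_p(K)\le 2g(\Sigma)+\sum_{i=1}^n a_i^2-\sum_{i=1}^n|a_i|.
\]
The coefficient $2$ on the genus term is fixed by the degenerate case $n=0$, where $\CPbar{0}\punc=B^4$ and the inequality must reduce to Rasmussen's slice-genus bound $s_p(K)\le 2g_4(K)$.

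With this in hand the argument is short. First I would check that the disk $D$ meets the hypotheses of the cited corollary: it is connected and canonically oriented, properly embedded by assumption, and by hypothesis $[D]=(a_1,\dots,a_n)$. Second, since $D$ is a disk we have $g(D)=0$, so the genus term vanishes and the inequality collapses to
\[
s_p(K)\le \sum_{i=1}^n a_i^2-\sum_{i=1}^n|a_i|,
\]
which is exactly the claim. Note that the right-hand side equals $\sum_{i=1}^n|a_i|\big(|a_i|-1\big)\ge 0$, and that both $a_i^2$ and $|a_i|$ are insensitive to the orientation chosen on $D$, so no orientation convention for $D$ needs to be tracked.

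The only genuine point to confirm — and where I expect essentially all of the work to sit — is that the cited inequality is available in precisely this setting: a surface \emph{with boundary}, properly embedded in the \emph{punctured, negative-definite} manifold $\CPbar{n}\punc$, with the sign conventions producing the nonnegative quantity above. If \cite[Corollary 1.5]{Ren} is instead phrased for the positive-definite $\CPar{n}$ (or for closed surfaces), then the remaining task is to translate: orientation reversal carries $\CPbar{n}$ to $\CPar{n}$ and sends $K$ to its mirror $m(K)$ with $s_p(m(K))=-s_p(K)$, while leaving $\sum a_i^2$ and $\sum|a_i|$ unchanged, so the two formulations are reconciled after this bookkeeping. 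Beyond this convention-matching, the statement is an immediate consequence of the adjunction inequality applied to a genus-zero surface.
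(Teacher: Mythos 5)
Your proposal is correct and follows exactly the paper's route: the paper's entire proof consists of applying \cite[Corollary 1.5]{Ren} to the disk $D$, which is the genus-zero specialization you describe. Your extra care about orientation conventions and the negative-definite setting is sound bookkeeping but not something the paper spells out.
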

From this, we can compute the slicing degree for the torus family $T_{m,m+1}$.
\begin{proposition}\label{prop:tmm+1}
Let $m>1$ be an integer. Then, $$sd_+(T_{m,m+1})=m^2.$$
\end{proposition}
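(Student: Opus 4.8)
The plan is to establish the two inequalities $sd_+(T_{m,m+1})\ge m^2$ and $sd_+(T_{m,m+1})\le m^2$ separately: the lower bound will come from Theorem~\ref{prop:sinv}, and the upper bound from a single explicit unknotting move.

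For the lower bound, I would first record that, since $T_{m,m+1}$ is the closure of a positive braid, its slice genus equals its Seifert genus and
$$
s_p(T_{m,m+1})=2g(T_{m,m+1})=(m-1)m=m^2-m
$$
for all $p$. Now suppose $T_{m,m+1}$ is $k$-slice, so it bounds a disk $D$ in $\CPbar{n}\punc$ with $[D]=(a_1,\dots,a_n)$; since the intersection form of $\CPbar{n}$ is negative definite, the self-intersection of $D$ is $-\sum_i a_i^2$, and hence $k=\sum_i a_i^2$. Theorem~\ref{prop:sinv} gives
$$
m^2-m\;\le\;\sum_{i=1}^{n}a_i^2-\sum_{i=1}^{n}|a_i|\;=\;k-\sum_{i=1}^{n}|a_i|.
$$
The key elementary input is the comparison $\sum_i|a_i|\ge\sqrt{\sum_i a_i^2}=\sqrt{k}$ between the $\ell^1$ and $\ell^2$ norms, which upgrades the inequality to $k-\sqrt{k}\ge m^2-m$. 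As $t\mapsto t-\sqrt{t}$ is increasing for $t\ge 1$ and equals $m^2-m$ exactly at $t=m^2$, any admissible integer $k$ must satisfy $k\ge m^2$; concretely, at $k=m^2-1$ one has $\sqrt{m^2-1}>m-1$ for $m>1$, so the inequality already fails there. Minimizing over all such $D$ yields $sd_+(T_{m,m+1})\ge m^2$.

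For the upper bound, I would produce a single generalized crossing change that unknots $T_{m,m+1}$. Writing $T_{m,n}$ as the closure of $(\sigma_1\cdots\sigma_{m-1})^n$, a positive full twist on the $m$ strands is the central braid $(\sigma_1\cdots\sigma_{m-1})^m$, so $T_{m,n}$ and $T_{m,n-m}$ differ by one full twist on $m$ coherently oriented parallel strands. Taking $n=m+1$ relates $T_{m,m+1}$ to $T_{m,1}$, the unknot; equivalently, a single negative generalized crossing change of order $m$ unknots $T_{m,m+1}$. By \cite[Lemma 2.8]{Kai} such an unknotting sequence bounds the slicing degree by the sum of the squares of the orders of the crossing changes, namely $m^2$ here, giving $sd_+(T_{m,m+1})\le m^2$. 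Combining the two bounds gives the equality.

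I expect the lower-bound optimization to be the conceptual heart of the argument: the $\ell^1$--$\ell^2$ comparison is exactly the statement that concentrating the entire homology class into a single $\CPbone$ factor (the case $n=1$, $a_1=m$ realized by the upper-bound construction) is the cheapest way to meet the adjunction constraint. The point to verify carefully is that the $s$-invariant bound is sharp precisely at $k=m^2$, so that the two estimates meet; note that using many order-$1$ crossing changes instead of the single order-$m$ move would only reproduce the much weaker estimate $4c_s^+$ of Proposition~\ref{prop:cs+}.
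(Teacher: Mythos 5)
Your proposal is correct and matches the paper's argument: the lower bound is the paper's Proposition~\ref{stau} (which you re-derive from Theorem~\ref{prop:sinv} via the $\ell^1$--$\ell^2$ comparison, with $s_p(T_{m,m+1})=m^2-m$ forcing $k\ge m^2$), and the upper bound is the same single order-$m$ negative full twist unknotting $T_{m,m+1}$, which the paper realizes explicitly as a disk of class $(m)$ in $\CPbone$ via a Kirby-calculus blow-down while you delegate it to \cite[Lemma 2.8]{Kai}. Both routes yield the same disk of self-intersection $-m^2$, so the two proofs are essentially identical.
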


Let $W$ be a negative-definite smooth $4$-manifold with $b_1(W)=0$ and $b_2(W)=n$. Denote $W\punc$ by $W^{\circ}$. If a knot $K$ bounds a disk $D$ in $W^{\circ}$, then the disk-exterior of $D$ in $W^{\circ}$ is a negative-definite 4-manifold bounded by a rational homology sphere. We get the following result by combining Ozsv{\'{a}}th and Szab{\'{o}}'s theorem \cite[Theorem 9.6]{OSdinv} with Ni and Wu's formula in \cite[Proposition 1.6]{NW}, which relates the $d$-invariant of surgeries on knot $K$ and the knot invariants $\{V_s(K)\}_{s\ge 0}$.

\begin{theorem}\label{lem:V_s}
Suppose $K$ bounds a disk $D$ in $W^{\circ}$ such that $[D]=(a_1,\cdots, a_n)$. Then, for $(\lambda_1,\cdots,\lambda_n)$ such that $\lambda_i$'s are odd and $0\le \sum_{i=1}^n \lambda_i a_i \le k$, we have that
$$\sum_{i=1}^n \lambda_i^2-n\ge 8V_j(K),$$
where $j=\frac{1}{2}(k-\sum \lambda_ia_i)$.
\end{theorem}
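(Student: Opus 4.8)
The plan is to realize the disk exterior as a negative-definite filling of a surgery on $K$ and feed it into the two cited results. Write $k=\sum_i a_i^2=-[D]^2$ and let $Z=W^{\circ}\setminus\nu(D)$ be the complement of an open tubular neighborhood of $D$. First I would verify the assertion made just before the statement. A Mayer--Vietoris argument for $W^{\circ}=Z\cup\nu(D)$ (with $\nu(D)$ contractible and $Z\cap\nu(D)\simeq S^1$) identifies $H_2(Z)$ with the kernel of the intersection pairing with $[D]$, that is, with the orthogonal complement $[D]^{\perp}\subset H_2(W)$; hence $b_2(Z)=n-1$ and the intersection form of $Z$ is the restriction of that of $W$, so it is negative definite. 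A standard neighborhood-removal computation identifies $\partial Z$ with a surgery on $K$ whose coefficient has absolute value $k$, and this is a rational homology sphere since $k\neq 0$. Because $W$ is closed, smooth and negative definite, Donaldson's diagonalization theorem lets me choose a basis in which its form is $-\mathrm{Id}$; in these coordinates $[D]=(a_1,\dots,a_n)$ and the characteristic vectors are exactly the integer vectors $\lambda=(\lambda_1,\dots,\lambda_n)$ with every $\lambda_i$ odd, which is the origin of the odd-parity hypothesis.

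Next I would run the two inequalities. A $\mathrm{Spin}^c$ structure $\mathfrak s$ on $W^{\circ}$ with $c_1(\mathfrak s)=\lambda$ restricts to $Z$, and since (rationally) $H_2(Z)=[D]^{\perp}$ its self-pairing is computed by orthogonally projecting off $[D]$:
\[
c_1(\mathfrak s|_Z)^2=c_1(\mathfrak s)^2-\frac{\langle c_1(\mathfrak s),[D]\rangle^2}{[D]^2}=-\sum_i\lambda_i^2+\frac{m^2}{k},\qquad m:=\sum_i\lambda_i a_i .
\]
Applying Ozsv{\'a}th--Szab{\'o}'s Theorem 9.6 to the negative-definite $Z$ then gives
\[
-\sum_i\lambda_i^2+\frac{m^2}{k}+(n-1)\le 4\,d\big(\partial Z,\mathfrak s|_{\partial Z}\big).
\]

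The remaining task is to evaluate the right-hand side through Ni--Wu's Proposition 1.6. I would identify $\partial Z$, with its induced orientation, as $S^3_k(K)$, and check that $\mathfrak s|_{\partial Z}$ carries the $\mathrm{Spin}^c$-label $j=\tfrac12(k-m)$, which is an integer in $[0,k/2]$ precisely because the $\lambda_i$ are odd and $0\le m\le k$. Then $\min\{j,k-j\}=j$, so Ni--Wu yields $d(S^3_k(K),j)=d(S^3_k(U),j)-2V_j(K)$, while the lens-space value is $d(S^3_k(U),j)=\frac{(2j-k)^2-k}{4k}=\frac{m^2-k}{4k}$. Substituting, the two $m^2/k$ terms and the constants cancel exactly, and the inequality collapses to $\sum_i\lambda_i^2-n\ge 8V_j(K)$, as claimed.

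The main obstacle is the orientation and $\mathrm{Spin}^c$-labeling bookkeeping in this last step: one must pin down the induced orientation on $\partial Z$ and the precise dictionary between $\mathrm{Spin}^c$ structures on $W^{\circ}$, on $Z$, and on $\partial Z$, so that Ni--Wu's positive-coefficient formula applies with label exactly $j$. This is delicate because a sign error would replace $V_j(K)$ by a mirror invariant and ruin the clean cancellation that makes the bound sharp. A secondary technical point is justifying the projection formula for $c_1(\mathfrak s|_Z)^2$ in the presence of possible torsion in $H^*(Z)$ (arising when $\gcd(a_i)>1$); I would handle this by working in $H^2(Z;\mathbb{Q})$, where the orthogonal splitting $H^2(W;\mathbb{Q})=\mathbb{Q}[D]\oplus[D]^{\perp}$ makes the self-pairing transparent, and then verifying that the relevant characteristic classes are integral.
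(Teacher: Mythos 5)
Your main line is the same as the paper's: it proves the nonzero-class case exactly as in Propositions \ref{thm:dinv} and \ref{opti}, namely pass to the disk exterior $Z=E(D)$, compute via Mayer--Vietoris that $b_2(Z)=n-1$ with the form negative definite, restrict a characteristic spin$^c$ structure $(\lambda_1,\dots,\lambda_n)$ and compute $c_1(\mathfrak s|_Z)^2=-\sum\lambda_i^2+m^2/k$ by projecting off $[D]$, apply Ozsv\'ath--Szab\'o's Theorem 9.6, and evaluate $d(S^3_k(K),\mathfrak t_j)$ through the Ni--Wu/$\max\{V_j,V_{k-j}\}$ formula, where the hypothesis $0\le m\le k$ pins down $2j=k-m$ in $\mathbb{Z}$ and forces $\max\{V_j,V_{k-j}\}=V_j$ by monotonicity. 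This is all correct, including your parity check that $j$ is an integer and your handling of the torsion $H_1(Z)\cong\mathbb{Z}/\gcd(a_i)$ by working rationally; the spin$^c$ bookkeeping you flag as delicate is resolved in the paper in exactly the way you describe, via $\langle c_1(\mathfrak s|_{-X_k(K)}),[\widehat F]\rangle=-\sum\lambda_ia_i$.

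The genuine gap is that your argument covers only $[D]\neq 0$: you explicitly invoke ``$\partial Z$ is a rational homology sphere since $k\neq 0$,'' and both your projection formula and the Ni--Wu label divide by $k$. But the theorem, unlike Propositions \ref{thm:dinv} and \ref{opti}, carries no hypothesis $[D]\neq 0$, and the case $k=0$ has real content: taking all $\lambda_i=\pm 1$ the conclusion reads $0\ge 8V_0(K)$, i.e.\ $V_0(K)=0$ for any knot bounding a null-homologous disk in such a $W^{\circ}$, a case that is actually used downstream (e.g.\ in Corollary \ref{nu+} and in the proof of Theorem \ref{thm:t22m+1}, where the homology class is arbitrary). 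When $k=0$ your construction fails outright, since $\partial Z=S^3_0(K)$ is not a rational homology sphere. The paper closes this case with a short blow-up trick: attach a $(-1)$-framed $2$-handle along a meridian of $K$ to promote $D$ to a disk of class $(0,\dots,0,1)$ in $W\#\,\CPbone$, apply the nonzero-class case there to conclude $V_0(K)=0$, and then observe that for odd $\lambda_i$ one has $\sum_{i=1}^n\lambda_i^2-n\ge 0=8V_j(K)$ with $j=0$. Adding this reduction would complete your proof.
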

In particular, Theorem \ref{lem:V_s} applies to {\small $ \CPbar{n}$}. Here is a corollary.
\begin{theorem}\label{thm:t22m+1}
If $K$ is a Floer thin knot, then $sd_+(K)$ is greater or equal to $4\tau(K)$. In particular, equality is achieved by $T_{2, 2m+1}$, i.e. $$sd_+(T_{2, 2m+1}) = 4m.$$
\end{theorem}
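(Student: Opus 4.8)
The plan is to prove the two assertions separately: first the general lower bound $sd_+(K)\ge 4\tau(K)$ for Floer thin $K$, and then the matching upper bound for $T_{2,2m+1}$. The whole point will be to feed a cleverly chosen test vector into Theorem \ref{lem:V_s}.

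For the lower bound, suppose $K$ is $k$-slice in $\CPbar{n}$, realized by a disk $D$ with $[D]=(a_1,\dots,a_n)$, so that $k=\sum_i a_i^2$. I begin with two reductions: coordinates with $a_i=0$ can be paired with $\lambda_i=\pm1$ so that they contribute $\lambda_i^2-1=0$ and drop out of both sides of Theorem \ref{lem:V_s}, and the sign of each $a_i$ can be absorbed into the sign of $\lambda_i$ without changing $\lambda_i^2$, $a_i^2$, or $\lambda_i a_i$; hence I may assume every $a_i\ge 1$. The Floer input is the structure theorem for thin complexes: the knot Floer complex of a thin knot is a staircase (that of $T_{2,2\tau+1}$ when $\tau>0$) together with boxes, and the boxes do not affect the local $h$-invariants, so $V_j(K)=V_j(T_{2,2\tau+1})=\lceil(\tau-j)/2\rceil$ for $0\le j\le\tau$ and $V_j(K)=0$ for $j\ge\tau$. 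In particular $V_j(K)\ge(\tau-j)/2$ for every $j\ge 0$.

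The heart of the argument is the choice of test vector. For each $i$ I will take $\lambda_i$ to be the odd integer nearest to $a_i$, choosing $\lambda_i=a_i-1$ when $a_i$ is even so that $\lambda_i\ge1$; then $0\le \sum_i\lambda_i a_i\le \sum_i a_i^2=k$, so the hypotheses of Theorem \ref{lem:V_s} are met with $j=\tfrac12(k-\sum_i\lambda_i a_i)$. Substituting the thin-knot bound $8V_j\ge 4(\tau-j)=4\tau-2k+2\sum_i\lambda_i a_i$ into $\sum_i\lambda_i^2-n\ge 8V_j$ and completing the square via the identity $\sum_i\lambda_i^2-2\sum_i\lambda_i a_i=\sum_i(\lambda_i-a_i)^2-\sum_i a_i^2$ turns the estimate into
\[ k \ \ge\ 4\tau + n - \sum_{i}(\lambda_i-a_i)^2. \]
Since $(\lambda_i-a_i)^2$ equals $0$ when $a_i$ is odd and $1$ when $a_i$ is even, the quantity $n-\sum_i(\lambda_i-a_i)^2$ is exactly the number of odd coordinates, which is nonnegative; hence $k\ge 4\tau$, proving $sd_+(K)\ge 4\tau(K)$.

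Finally, $T_{2,2m+1}$ is alternating, hence thin, with $\tau(T_{2,2m+1})=m$, so the lower bound yields $sd_+\ge 4m$. For the matching upper bound I will invoke Proposition \ref{prop:cs+} together with $c_s^+(T_{2,2m+1})\le u(T_{2,2m+1})=m$: changing $m$ of the positive crossings to negative unknots the torus knot, and each such change is a negative generalized crossing change, so the construction underlying Proposition \ref{prop:cs+} gives a $4m$-slice disk and hence $sd_+\le 4m$. I expect the main obstacle to be the bookkeeping around the optimal choice of $\lambda$, in particular checking that the nearest-odd choice simultaneously respects the constraint $0\le\sum_i\lambda_i a_i\le k$ and minimizes $\sum_i(\lambda_i-a_i)^2$, rather than the Floer-theoretic input, which is already packaged in Theorem \ref{lem:V_s} and the standard thin-knot evaluation of the $V_j$.
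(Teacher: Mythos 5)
Your proof is correct and follows essentially the same route as the paper: for the lower bound you apply Theorem \ref{lem:V_s} with the identical nearest-odd test vector ($\lambda_i=a_i-1$ for $a_i$ even, $\lambda_i=a_i$ for $a_i$ odd) together with the thin-knot evaluation of the $V_j$ from Proposition \ref{vs:thin}, your completing-the-square identity being just a cleaner repackaging of the paper's direct comparison (which identifies the slack $n-\sum_i(\lambda_i-a_i)^2$ as the number of odd coordinates). For the upper bound you cite Proposition \ref{prop:cs+} with $c_s^+(T_{2,2m+1})\le m$ where the paper uses Lemma \ref{lem:sn} with $u_s(T_{2,2m+1})\le m$, but both rest on the same unknotting sequence of $m$ positive crossing changes, so the arguments coincide in substance.
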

Theorem \ref{lem:V_s} is also capable of narrowing down the slicing degree for the majority of knots with small crossing numbers (see Section \ref{small}). 

The concordance invariant $\Gamma_K(s)$ constructed by Daemi and Scaduto from the Chern-Simons filtration on the singular instanton Floer complex in \cite{daemi19} can be used to obstruct a knot from bounding a disk in certain homology classes.
\begin{proposition}
Let $K$ be a knot such that $\sigma(K) \le 0$. Suppose $K$ bounds a disk $D$ in $W^{\circ}$ such that $[D] = (\,\underbrace{2,\cdots, 2}_{p}\: , \underbrace{1,\cdots, {1}}_{q}\:, 0 \cdots, 0)$ in $H^2(W;\mathbb{Z})$ with $p,q\ge 0$ and $n \ge p+q$. Then, $$\Gamma_K\left(-\frac{1}{2}\sigma(K) \right) \le \frac{p}{2} + \frac{q}{8}.$$ 
\end{proposition}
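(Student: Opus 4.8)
The plan is to translate the embedded disk $D$ into a singular instanton cobordism and then apply the Chern--Simons filtered cobordism inequality for the Daemi--Scaduto invariant $\Gamma_K$ from \cite{daemi19}. Since $W$ is closed, negative definite, and has $b_1(W)=0$, Donaldson's diagonalization theorem lets me choose a basis of $H^2(W;\mathbb{Z})$ in which the intersection form is $-I_n$; in this basis the hypothesis realizes $(W^\circ,D)$ as a blow-up model in which $D$ meets $p$ of the exceptional spheres with multiplicity $2$, meets $q$ of them with multiplicity $1$, and is disjoint from the rest. Blowing these exceptional spheres down, exactly as in the correspondence between $k$-slice disks and negative generalized crossing changes underlying Proposition~\ref{prop:cs+} (see \cite[Lemma~2.8]{Kai}), turns $D$ into a properly embedded singular surface cobordism in $[0,1]\times S^3$ from the unknot to $K$, in which an exceptional sphere of multiplicity $a$ becomes a negative full twist on $a$ parallel strands.

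Next I would feed this cobordism into the filtered cobordism map of \cite{daemi19}. The blow-down/blow-up dictionary identifies the topological (instanton) energy contributed by each exceptional sphere of multiplicity $a$ with $a^2/8$, so the total energy of the reducible singular connection determined by the homology class of $D$ is
$$\sum_{i=1}^n \frac{a_i^2}{8} = \frac{4p+q}{8} = \frac{p}{2}+\frac{q}{8}.$$
Because the cobordism terminates at the unknot, for which $\Gamma$ vanishes, Daemi and Scaduto's inequality bounds the value of $\Gamma_K$ at the filtration level cut out by this reducible by precisely the energy above, which produces the claimed right-hand side $\tfrac{p}{2}+\tfrac{q}{8}$.

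The remaining, and most delicate, point is to verify that the filtration level at which the cobordism map is forced to act is exactly $-\tfrac12\sigma(K)$. This is where the hypothesis $\sigma(K)\le 0$ enters: it guarantees $-\tfrac12\sigma(K)\ge 0$, so that $\Gamma_K\!\left(-\tfrac12\sigma(K)\right)$ lies in the range where $\Gamma_K$ is defined and the bound is meaningful. Identifying this level requires computing the relative Chern--Simons grading (equivalently, the index and grading shift) of the singular reducible connection on the disk exterior and matching it with the knot signature via the standard relation between the signature and the reducible stratum. I expect this signature bookkeeping --- pinning the level to $-\tfrac12\sigma(K)$ rather than a nearby value, and checking that the reducible persists across the negative-definite filling so that the filtered map is genuinely defined --- to be the main obstacle; once it is in place, combining it with the energy count above yields the stated inequality.
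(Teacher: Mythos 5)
You have the right engine in mind --- Daemi--Scaduto's filtered instanton inequality --- but as written the proof has several genuine gaps. First, the opening reduction is invalid for the statement as given: Donaldson diagonalization produces a diagonal basis of $H^2(W;\mathbb{Z})$, but it does not make $W$ a blow-up, and the square $-1$ classes of a general negative-definite $W$ need not be represented by embedded spheres, so there is nothing to blow down. The paper never needs this step: it applies \cite[Proposition 4.33]{daemi20} (Proposition \ref{prop: daemi20} above) directly to the negative-definite cobordism pair $(W,D)$ from $(S^3,U)$ to $(S^3,K)$ obtained by removing a small ball around a point of $D$. Second, your energy count rests on a false general ``dictionary.'' The relevant quantity is the minimal topological energy over the lattice of reducibles, $\kappa_{min}(W,D)=\min_{z\in\mathbb{Z}^n}\sum_{i}\left(z_i+\frac{a_i}{4}\right)^2$, which here equals $\frac{p}{4}+\frac{q}{16}$, and the bound is $2\kappa_{min}=\frac{p}{2}+\frac{q}{8}$. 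Your per-class figure $a^2/8$ agrees with this only because $|a_i|\le 2$ forces the minimizer to realize $\left(\frac{a_i}{4}\right)^2$; for $a=3$ the minimum would be $2\cdot\frac{1}{16}=\frac{1}{8}$, not $\frac{9}{8}$, so the identification of the energy with $\sum a_i^2/8$ is not a principle you can invoke, even though it happens to output the right number in this case.

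More seriously, you never verify the hypothesis that makes the inequality fire: the signed count $\eta(W,D)$ of minimal reducibles must be nonzero in $\mathbb{Z}[T^{\pm}]$. This is not automatic, and it is exactly where cancellation threatens: each of the $p$ coordinates with $a_i=2$ has two minimizers $z_i\in\{0,-1\}$ contributing opposite signs via $(-1)^{\mu(z)}$, so the unweighted signed count is $(1-1)^p=0$ for $p\ge 1$. The paper's proof keeps track of the monopole number and computes $\eta(W,D)=\left(1-T^4\right)^p\neq 0$; your remark about ``checking that the reducible persists'' gestures at this issue but does not resolve it, and without the $T$-weighting the count genuinely vanishes. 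Finally, the filtration level that you defer as ``the main obstacle'' is in fact immediate from the statement of \cite[Proposition 4.33]{daemi20}: the level is $i=4\kappa_{min}-\frac{1}{4}\sum_{i=1}^n a_i^2-\frac{1}{2}\sigma(K)$, and since $4\kappa_{min}=p+\frac{q}{4}=\frac{1}{4}\sum a_i^2$ for this homology class, one gets $i=-\frac{1}{2}\sigma(K)$ with no index computation needed; the hypothesis $\sigma(K)\le 0$ enters only to guarantee $i\ge 0$, as you correctly observe. So the skeleton is right, but the proof is missing the $\eta\neq 0$ verification and the level computation, and its first reduction would fail for general $W$.
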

Daemi and Scaduto computed the $\Gamma_K(s)$ when $K$ is a double twist knot in Corollary 3.24 \cite{daemi19}. This gives us finer lower bounds for the knot $7_4$ and $9_5$ (see Example \ref{ex: 7495}).

We can learn more about the $k$-sliceness of a knot $K$ with the help of the $k$-friends of $K$: knots that share the same $k$-surgery with $K$.
\begin{theorem}
Let $L=\{(R,r),(B,0),(G,0)\}$ be a $k$-special RBG link such that $R$ is $r$-slice in some $\#^{n}{\mathbb{CP}^2}$, and let $(K,K')$ be the knot pair associated to $L$. If $s(K') > k-\sqrt{k}$, then $sd_+(K)> k$.
\end{theorem}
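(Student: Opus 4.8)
The plan is to prove the contrapositive: assuming that $K$ is $k$-slice (so that $sd_+(K)\le k$), I will show $s(K')\le k-\sqrt{k}$. The heart of the matter is a \emph{transfer} statement—that under the standing hypotheses on $L$, the $k$-sliceness of $K$ forces $K'$ to be $k$-slice as well, and moreover in a \emph{standard} connected sum $\CPbar{m}$. Granting this, the numerical conclusion will be immediate from the adjunction inequality of Theorem~\ref{prop:sinv}. Throughout I assume $k\ge 1$, so that $S^3_{-k}(K)$ is a rational homology sphere.

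First I would invoke the trace embedding lemma \cite{19trace}. If $K$ bounds a smooth disk of self-intersection $-k$ in $\CPbar{n'}\punc$, then the $(-k)$-trace $X_{-k}(K)$ embeds smoothly in $\CPbar{n'}$; its complement $M$ is a negative-definite $4$-manifold with $b_1=0$ and $\partial M = S^3_{-k}(K)$. Because $(K,K')$ is the knot pair of the $k$-special RBG link $L$, the two knots share a common surgery, giving a homeomorphism $S^3_{-k}(K)\cong S^3_{-k}(K')$. I would then cut $\CPbar{n'}$ along $S^3_{-k}(K)$ and reglue the $(-k)$-trace of $K'$ in place of that of $K$, using the $r$-slice disk of $R$ in $\CPar{n}$ to cap off the red surgery solid torus. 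The outcome is a closed $4$-manifold $X'$ containing $X_{-k}(K')$; by the trace embedding lemma again, $K'$ bounds a disk of self-intersection $-k$ in $X'\punc$.

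The decisive point is that $X'$ is not merely homeomorphic but \emph{diffeomorphic} to a standard $\CPbar{m}$, and this is exactly where the two hypotheses enter. The $r$-sliceness of $R$ in the \emph{standard} $\CPar{n}$ supplies a standard cap for the red handle, so that the positive red framing is traded, after the orientation reversal built into the regluing, for standard $\overline{\mathbb{CP}^2}$-summands; the $k$-special condition guarantees that $X'$ is simply connected with the correct odd, negative-definite intersection form and that the surgery homeomorphism extends across the reglued caps without introducing an exotic structure. With $X'\cong\CPbar{m}$ smoothly, $K'$ is genuinely $k$-slice, so it bounds a disk $D'$ in $\CPbar{m}\punc$ with $[D']=(a_1,\dots,a_m)$ and $\sum_{i=1}^m a_i^2=k$. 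Theorem~\ref{prop:sinv} then gives
$$
s(K')\ \le\ \sum_{i=1}^m a_i^2-\sum_{i=1}^m|a_i|\ =\ k-\sum_{i=1}^m|a_i|,
$$
and since $\big(\sum_i|a_i|\big)^2\ge\sum_i a_i^2=k$ we have $\sum_i|a_i|\ge\sqrt{k}$, whence $s(K')\le k-\sqrt{k}$. This contradicts $s(K')>k-\sqrt{k}$, so $K$ cannot be $k$-slice and $sd_+(K)>k$.

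I expect the main obstacle to be the diffeomorphism claim of the third paragraph: keeping careful track of orientations and framings through the cut-and-paste of the traces, and verifying that the $r$-slice disk for $R$ in $\CPar{n}$ together with the $k$-special normalization really does close $X'$ up to the \emph{standard} smooth $\CPbar{m}$ rather than to an exotic copy. If $X'$ were only homeomorphic to $\CPbar{m}$, then Theorem~\ref{prop:sinv}, being a smooth statement tied to the standard structure, could not be applied—indeed the failure of such a transfer is precisely the mechanism by which these RBG pairs are expected to produce exotica, so the two hypotheses must do exactly the work of excluding that failure.
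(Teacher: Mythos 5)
Your endgame is sound and matches the intended one: once $K'$ is known to be $k$-slice in a \emph{standard} $\CPbar{m}$, Theorem \ref{prop:sinv} together with $\left(\sum_i |a_i|\right)^2 \ge \sum_i a_i^2 = k$ gives $s(K')\le k-\sqrt{k}$, contradicting the hypothesis. (For what it is worth, the paper itself offers no proof: the statement is quoted from \cite[Theorem 1.4(a)]{Qin}, so everything hinges on whether your transfer step stands on its own.)

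It does not: there is a genuine gap at what you yourself call the decisive point. Cutting $\CPbar{n'}$ along the surgered boundary of the trace of $K$ and regluing the trace of $K'$ via the surgery homeomorphism produces a closed manifold $X'$ that Freedman's theorem identifies with $\CPbar{m}$ only \emph{topologically}; nothing in the $k$-special condition makes the boundary homeomorphism extend smoothly across the traces, and your sentence asserting that the regluing happens ``without introducing an exotic structure'' is exactly the full content of the theorem, asserted rather than proved. Indeed, this cut-and-paste is by design the engine for producing exotic $\CPbar{m}$'s (see the introduction and \cite[Theorem 1.5]{Qin}); if regluing were always standard under your reading of the hypotheses, that program would be vacuous. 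A concrete sanity check: your argument never uses in any essential way that the cap for $R$ lives in the positive-definite standard $\CPar{n}$, yet this is exactly where the theorem's force lies. For the link behind $K_B(3)$ one has $r=1$, and the unknot $R$ is not $1$-slice in any $\CPar{n}$ (a homology class in a positive-definite form cannot have square $-1$), which is precisely why $K_B(3)$ remains a candidate for producing exotica while $K_B(m)$, $m\le 2$, does not. So the hypothesis on $R$ must enter through an explicit handle-level construction rather than a boundary regluing: one relates the traces of $K$ and $K'$ inside a common $4$-manifold built from the RBG diagram by attaching the $r$-framed handle along $R$, and then uses the exterior of the $r$-slice disk of $R$ in $\CPar{n}$ (with the orientation reversal accounted for) to land the slice disk for $K'$ in an honest $\CPbar{m}$ --- compare the weaker transfer to merely negative-definite targets via \cite[Propositions 5.3 and 5.4]{Qin} used in the lemma of Section \ref{friends}, which is all one gets without the hypothesis on $R$, and which is not enough for the $s$-invariant inequality. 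Lacking this construction, your proposal leaves the central claim unsupported.
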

From this, we construct examples of knots whose slicing degree is $1,2$ or $3$.
\begin{proposition}
Define $K_B(m)$ by the knot diagram to the left of Figure \ref{KBmKG}. Then,  
$$sd_+(K_B(m))=m+1,$$
for $m\in\{-1,0,1,2\}$.
\end{proposition}
Finally, we combine all the methods and compute the slicing degrees of almost all small knots (with crossing number less than 10); see Table \ref{08} and Table \ref{09}.
\medskip

{\bf Organization of the paper.} In Section \ref{basic}, we examine formal properties of the slicing degree, and provide upper bounds on slicing degrees by positive clasp numbers. In Section \ref{lowb}, we discuss methods to bound the slicing degree from below and compute the slicing degree for certain torus knots. In Section \ref{friends}, we investigate special relationships between knots and demonstrate the advantages of the RBG technique in obstructing $k$-sliceness. In Section \ref{small}, we compute the slicing degree for small knots.

{\bf Conventions.} All manifolds are smooth. All disks are smoothly, properly embedded. All manifolds are oriented and all maps are orientation preserving.

{\bf Acknowledgements.} The author is grateful to Ciprian Manolescu for suggesting this problem and offering continuous guidance. Furthermore, the author thanks Aliakbar Daemi, Adam Levine, Lisa Piccirillo, Qiuyu Ren and Hongjian Yang for helpful conversations.

%%%%%%%%%%%%%%%%%%%%%%%%%%%%%%%%%%%%%%%%%%%%%%%%%%%%%%%%%%%%%%%%%%%%%%%%%%%%%%%%%%%%%%%%%%%%%%%%%%%%%%%%%%%%%%%%%%%%%%%%%%%%

\section{Basic properties}\label{basic}
In this section, we review the definition and demonstrate several basic properties of the slicing degree.
\begin{definition}[Definition 2.9 \cite{Kai}]
Let $K$ be a knot in $S^3$. The {\em (positive) slicing degree} $sd_+(K)$ of $K$ is 
$$
min\left\{k\in\mathbb{Z}_{\ge 0}\left\vert\; \text{$K$ is $k$-slice in {\small $ \CPbar{n}$} for some $n$}\right\}\right..
$$
\end{definition}
\begin{remark}
One could also define the negative slicing degree $sd_-(K)$ to be the maximum among all integers $k$ such that $K$ is $k$-slice in {$\CPar{n}$} for some $n$. The terminology positive and negative comes from the fact that $sd_+\ge 0$, whereas $sd_-\le 0$. Since $sd_-(K)=-sd_+(m(K))$, it suffices to consider the positive slicing degree for each knot together with its mirror. 
\end{remark}
One of the properties of the slicing degree is that the slicing degree of a knot $K$ vanishes if and only if $K$ is $H$-slice in some {$\CPbar{n}$} \cite[Lemma 2.10]{Kai}. Moreover, the slicing degree is a concordance invariant.
\begin{proposition}\label{concor}
If $K_0$ is concordant to $K_1$, then $sd_+(K_0)= sd_+(K_1)$.
\end{proposition}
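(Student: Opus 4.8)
The plan is to prove the two inequalities $sd_+(K_1)\le sd_+(K_0)$ and $sd_+(K_0)\le sd_+(K_1)$ separately; since concordance is a symmetric relation, it suffices to carry out one of them, say $sd_+(K_1)\le sd_+(K_0)$, and then reverse the roles of $K_0$ and $K_1$. Set $k=sd_+(K_0)$ and let $D_0$ be a smoothly, properly embedded disk in $\CPbar{n}\punc$ with $\partial D_0=K_0\subset S^3$ and self-intersection number $-k$, witnessing that $K_0$ is $k$-slice. A concordance from $K_0$ to $K_1$ is a smoothly embedded annulus $A\subset S^3\times[0,1]$ with $\partial A=(K_0\times\{0\})\cup(K_1\times\{1\})$.

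First I would attach the collar $S^3\times[0,1]$ to the boundary $S^3=\partial(\CPbar{n}\punc)$ along $S^3\times\{0\}$ and form the disk $D_1:=D_0\cup_{K_0}A$ inside the resulting manifold. Attaching an external collar does not change the diffeomorphism type, so this manifold is again diffeomorphic to $\CPbar{n}\punc$, now with boundary $S^3\times\{1\}$ containing $K_1=\partial D_1$. Thus $K_1$ bounds a smoothly, properly embedded disk $D_1$ in $\CPbar{n}\punc$, and crucially the ambient manifold, hence the index $n$, is unchanged.

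The key step is to check that the gluing preserves the self-intersection number, i.e. that $D_1^2=D_0^2=-k$. Since $\partial(\CPbar{n}\punc)=S^3$ is an integral homology sphere, the self-intersection of such a disk equals the value of the (unimodular, negative-definite) intersection form on its class $[D]\in H_2(\CPbar{n}\punc,\partial)$, computed relative to the Seifert $0$-framing of its boundary. The collar contributes nothing to this relative homology and $A$ is null-homologous in $S^3\times[0,1]$, so $[D_1]=[D_0]$. To see that no framing correction is introduced, I would note that $K_0$ and $K_1$ cobound $A$ in the product $S^3\times[0,1]$ and each is null-homologous, so the normal framing of $A$ restricts to the $0$-framing on both ends; pushing $D_0$ off itself along its $0$-framing and extending the pushoff across $A$ therefore yields a pushoff of $D_1$ realizing the same algebraic count. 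Hence $D_1^2=-k$, so $K_1$ is $k$-slice in $\CPbar{n}$ and $sd_+(K_1)\le k=sd_+(K_0)$.

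The main obstacle I anticipate is precisely this self-intersection bookkeeping: making rigorous that the Seifert framing is transported correctly along the concordance annulus so that the normal Euler number of $A$ (relative to the two $0$-framings) vanishes and the gluing contributes no extra self-intersections. Once that is settled, applying the same argument to the reversed concordance from $K_1$ to $K_0$ gives $sd_+(K_0)\le sd_+(K_1)$, and the two inequalities together yield $sd_+(K_0)=sd_+(K_1)$.
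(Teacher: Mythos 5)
Your proof is correct and follows essentially the same route as the paper: the paper also glues the concordance annulus in $S^3\times I$ to a minimal-self-intersection disk and invokes symmetry of concordance to get both inequalities. Your extra care with the framing/self-intersection bookkeeping is sound (the paper simply asserts that the self-intersection number of the glued disk is unchanged), so there is nothing to correct.
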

\begin{proof}
Let $C$ be a concordance from $K_0$ to $K_1$ in $S^3 \times I$. The knot $K_1$ bounds a disk $D$ with self-intersection $-sd_+(K_1)$ in some {\small $\CPbar{n}\punc$}. Concatenate $S^3 \times I$ with {\small $\CPbar{n}\punc$}. Consider the disk $C\cup D$ with boundary $K_0$. Since the self-intersection number of $C\cup D$ is $-sd_+(K_1)$, we have that $sd_+(K_0) \le sd_+(K_1)$. Similarly, exchange $K_0$ and $K_1$, and obtain that $sd_+(K_1) \le sd_+(K_0)$. The result follows.
\end{proof}
\begin{proposition}
Given two knots $K_1$ and $K_2$,
$$sd_+(K_1\# K_2)\le sd_+(K_1) + sd_+(K_2).$$
\end{proposition}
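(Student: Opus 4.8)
The plan is to build an optimal slicing disk for $K_1 \# K_2$ by taking a boundary connected sum of optimal slicing disks for $K_1$ and $K_2$. Write $k_i = sd_+(K_i)$; by Proposition~\ref{prop:cs+} each $k_i$ is finite, so the minimum in the definition is attained, and there exist smoothly, properly embedded disks $D_i \subset \CPbar{n_i}\punc$ with $\partial D_i = K_i$ and self-intersection number $-k_i$.

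First I would assemble the ambient $4$-manifold. Pick a point $p_i$ on $K_i \subset S^3 = \partial(\CPbar{n_i}\punc)$ and form the boundary connected sum $W := (\CPbar{n_1}\punc) \,\natural\, (\CPbar{n_2}\punc)$ along half-balls centered at $p_1$ and $p_2$. Boundary connect-summing two punctured closed $4$-manifolds recovers the puncture of their interior connected sum, so $W \cong \CPbar{n_1+n_2}\punc$, with $\partial W = S^3 \# S^3 = S^3$.

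Next I would splice the disks. Choosing the two half-balls so that each meets $D_i$ in a standard unknotted half-disk and meets $K_i$ in a standard arc through $p_i$, the band realizing the boundary connected sum joins $D_1$ and $D_2$ into a single properly embedded disk $D := D_1 \,\natural\, D_2 \subset W$, and the induced operation on boundaries is precisely the knot connected sum, so $\partial D = K_1 \# K_2$. It then remains to compute the self-intersection number. By excision the relative class decomposes as $[D] = [D_1] + [D_2]$ in $H_2(W,\partial W) \cong H_2(\CPbar{n_1}) \oplus H_2(\CPbar{n_2})$, on which the intersection form of $W$ is the orthogonal direct sum of the two summands, while the connecting band lies in the gluing region and carries no homology. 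Hence $[D]^2 = [D_1]^2 + [D_2]^2 = -(k_1+k_2)$, exhibiting $K_1\#K_2$ as $(k_1+k_2)$-slice in $\CPbar{n_1+n_2}$ and giving $sd_+(K_1 \# K_2) \le k_1 + k_2$.

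The step I expect to require the most care is the self-intersection count, because the self-intersection number of a slicing disk is the relative Euler number measured against the $0$-framing of its boundary knot. I would therefore need to verify that the $0$-framing of $K_1 \# K_2$ is the band sum of the $0$-framings of $K_1$ and $K_2$ — equivalently, that the Seifert framings are compatible across the connecting band — so that the chosen push-offs of $D_1$ and $D_2$ glue to a push-off of $D$ and the framed intersection counts genuinely add. The standard choice of half-balls meeting each $D_i$ in a trivial half-disk is exactly what makes this compatibility transparent.
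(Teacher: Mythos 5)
Your proposal is correct and follows essentially the same route as the paper: boundary connected sum of the punctured manifolds containing optimal slicing disks, splicing the disks into one bounding $K_1\# K_2$, and additivity of self-intersection numbers. The only cosmetic difference is that you perform the boundary sum along half-balls meeting the knots (with the framing compatibility check spelled out), whereas the paper sums away from the knots and then joins $D_1$ and $D_2$ by a band --- these are equivalent, and your extra care about the $0$-framing is a welcome but not essential refinement.
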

\begin{proof}
There exists a disk $D_i$ in {\small $\CPbar{n_i}\punc$} with boundary $K_i$ such that the self-intersection number of $D_i$ in equals to $-sd_+(K_i)$. Take the boundary sum of {\small $\CPbar{n_1}\punc$} and  {\small $\CPbar{n_1}\punc$} away from the knots. Then we know that the disjoint union $K_1\sqcup K_2 $ bounds two disjoint disks in {\small $\CPbar{n_1+n_2}\punc$}. Connect $K_1$ and $K_2$ with a band to merge $D_1$ and $D_2$ into one disk $D$ whose boundary is the connected sum $K_1\# K_2$. Since $D$ has self-intersection number {\small $-(sd_+(K_1) + sd_+(K_2))$}, the inequality follows.
\end{proof}
However, the slicing degree is not additive. Consider a knot $K$ whose slicing degree is nonzero (see Table \ref{08}). Since $K\# m(K)$ is slice, we have that $sd_+(K\# m(K))=0$, which is strictly less than the sum $sd_+(K) + sd_+(m(K))$.

The slicing degree is finite with the following upper bound. Recall that the {\em slicing number} $u_s(K)$ of a knot $K$ is defined to be is the minimal number of crossing changes to turn $K$ into a slice knot, see \cite{owens}.
\begin{lemma}\label{lem:sn} 
Let $u_s(K)$ be the slicing number of $K$. Then,
$$sd_+(K)\le 4 \cdot u_s(K).$$
\end{lemma}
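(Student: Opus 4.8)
The plan is to bootstrap the clasp-number bound of Proposition~\ref{prop:cs+}: since that proposition already supplies the factor of $4$, it suffices to prove the purely low-dimensional comparison $c_s^+(K)\le u_s(K)$, after which $sd_+(K)\le 4c_s^+(K)\le 4u_s(K)$ follows at once. Thus the whole task reduces to converting a minimal unknotting-to-slice sequence into a normally immersed disk in $B^4$ whose number of positive double points is controlled by $u_s(K)$.

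To carry this out I would first unpack $u_s(K)$ as a chain of crossing changes $K=K_0\to K_1\to\cdots\to K_u$ with $u=u_s(K)$ and $K_u$ slice. I would realize each crossing change $K_i\to K_{i+1}$ by a finger move, producing an immersed annulus $A_i\subset S^3\times I$ with $\partial A_i=K_i\sqcup K_{i+1}$ and a single transverse double point, whose sign records the direction of the change. Stacking the $A_i$ and then capping $K_u$ with an embedded slice disk in $B^4$ yields a normally immersed disk $\Delta\subset B^4$ with $\partial\Delta=K$ and exactly $u$ double points; writing $p\le u$ for the number of positive ones, this already gives $c_s^+(K)\le p\le u_s(K)$.

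To make the mechanism explicit, and to see that the positive double points are precisely what is being paid for, I would resolve $\Delta$ by blowing up $B^4$ at each of its $u$ double points with a copy of $\CPbone$, with exceptional classes $E_i$ satisfying $E_i^2=-1$. Taking the proper transform $D\subset\CPbar{u}\punc$ separates both local sheets at every double point, so $D$ is embedded. At a positive double point the two sheets meet $E_i$ with the same sign, so $D\cdot E_i=\pm2$ and this factor contributes $-4$ to $[D]^2$; at a negative double point the sheets meet $E_i$ with opposite signs, so $D\cdot E_i=0$ and the contribution is $0$. Hence $[D]^2=-4p$, exhibiting $K$ as $(4p)$-slice and giving $sd_+(K)\le 4p\le 4u_s(K)$ directly, consistently with Proposition~\ref{prop:cs+}.

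The step I expect to be the real content, rather than bookkeeping, is this sign analysis: one must check that a negative double point, resolved with $\CPbone$ rather than with $\mathbb{CP}^2$, splits into an embedded pair of sheets contributing $0$ (not $+4$) to the self-intersection, so that the construction stays inside negative-definite $\CPbar{n}$ and no single crossing change ever costs more than $4$. The remaining ingredients---the finger-move realization of a crossing change as an annulus with one double point, the stacking of these annuli, and the matching of Seifert framings so that $[D]^2$ computes the self-intersection of $D$ relative to the $0$-framing of $K$---are standard, and I would only indicate them.
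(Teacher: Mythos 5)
Your proposal is correct, but it reaches the bound by a different decomposition than the paper. The paper proves Lemma~\ref{lem:sn} directly: it realizes the $u_s(K)$ crossing changes as positive full twists along disks meeting the slice knot $K_s$ geometrically twice, and then cites Nakamura's twisting lemma \cite[Lemma 2.8]{Kai} to conclude that $K$ is $4u_s^+(K)$-slice in $\CPbar{u_s(K)}$, where $u_s^+(K)$ counts the twist disks with algebraic intersection number two; the clasp-number bound is then obtained \emph{afterwards}, with $sd_+(K)\le 4c_s(K)$ deduced from this lemma together with Owens--Strle's identity $c_s(K)=\min u_s(K_c)$ over the concordance class, and with Proposition~\ref{prop:cs+} proved separately. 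You instead run the logic the other way: you convert the unknotting-to-slice sequence into a normally immersed disk in $B^4$ (crossing-change trace annuli capped with the slice disk), obtaining $c_s^+(K)\le p\le u_s(K)$, and then invoke (and in effect re-derive) Proposition~\ref{prop:cs+}. This is not circular, because the paper's proof of Proposition~\ref{prop:cs+} is exactly the local resolution you describe --- a positive double point is excised and its positive Hopf link capped by disks of total class $2E_i$ in $\CPbone\punc$, costing $-4$, while a negative Hopf link bounds null-homologous disks, costing $0$ --- and that proof nowhere uses Lemma~\ref{lem:sn}. The geometric core (negative blow-up resolution, with only positive events charged $-4$) is therefore identical in the two arguments, and your $4p$ recovers the paper's sharper implicit bound $4u_s^+(K)$; what your route buys is a self-contained proof that makes the comparison $c_s^+\le u_s$ explicit and unifies the slicing-number and clasp-number bounds, whereas the paper's twisting route outsources the blow-down bookkeeping to \cite[Lemma 2.8]{Kai}. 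Your sign analysis at the double points is the correct one, and it agrees with the paper's Hopf-link formulation.
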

\begin{proof}
Let $K_s$ be the slice knot obtained by applying $u_s(K)$ many crossing changes on $K$. Since a crossing change can be realized by adding a full twist along two parallel strands, the knot $K$ can be obtained from $K_s$ by performing positive full twists on $u_s(K)$ disks, each of which intersects $K_s$ geometrically twice. Let $u_{s}^+(K)$ be the number of disks which intersect with $K$ algebraically twice. Since $K_s$ is slice in $S^4$, the knot $K$ is {\small $4u_{s}^+(K)$}-slice in {\small $\CPbar{u_s(K)}$} by \cite[Lemma 2.8]{Kai}. Thus, $sd_+(K)\le 4u_{s}^+(K)\le 4u_s(K)$.
\end{proof}
The {\em $4$-dimensional clasp number} (or $4$-ball crossing number) $c_s(K)$ is defined to be the minimal number of double points of an immersed disk in the $4$-ball whose boundary is $K$. By \cite[Proposition 2.1]{owens}, Owens and Strle showed that $$c_s(K)=\min_{K_c \text{ is concordant to } K} u_s(K_c).$$ Together with Lemma \ref{lem:sn} and Proposition \ref{concor}, we have the following upper bound.

\begin{proposition} Let $c_s(K)$ be the 4-dimensional clasp number of $K$. Then,
$$sd_+(K)\le 4 \cdot c_s(K).$$
\end{proposition}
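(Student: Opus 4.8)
The plan is to chain together the three ingredients that have just been assembled: the concordance invariance of the slicing degree (Proposition \ref{concor}), the slicing-number bound (Lemma \ref{lem:sn}), and the Owens--Strle identity $c_s(K)=\min_{K_c} u_s(K_c)$, where the minimum ranges over all knots $K_c$ concordant to $K$. The whole content of the proposition is that, although $u_s$ itself is not a concordance invariant, its minimum over a concordance class is, and $sd_+$ being a concordance invariant lets us pass the slicing-number bound from a well-chosen representative back to $K$.

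First I would invoke \cite[Proposition 2.1]{owens} to select a knot $K_c$ concordant to $K$ that realizes the minimum, so that $u_s(K_c)=c_s(K)$; here it matters that the minimum is actually attained, which is exactly the content of the cited identity. Next I would apply Lemma \ref{lem:sn} to this representative $K_c$ rather than to $K$ itself, obtaining $sd_+(K_c)\le 4\cdot u_s(K_c)=4\cdot c_s(K)$. Finally, since $K$ and $K_c$ are concordant, Proposition \ref{concor} gives $sd_+(K)=sd_+(K_c)$, and combining this equality with the previous inequality yields the desired $sd_+(K)=sd_+(K_c)\le 4\cdot u_s(K_c)=4\cdot c_s(K)$.

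The argument is entirely formal once these three results are in hand, so there is no genuine obstacle; the only points requiring care are bookkeeping ones. One must apply the slicing-number bound to the concordance representative $K_c$ and not to $K$, and then use concordance invariance to transport the bound back to $K$, making sure the direction of each inequality and equality lines up so that the final chain closes correctly. In particular, the equality $sd_+(K)=sd_+(K_c)$ is essential: without it, a bound proved for $K_c$ would say nothing about $K$, and it is precisely Proposition \ref{concor} that removes this gap.
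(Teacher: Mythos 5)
Your proposal is correct and follows essentially the same route as the paper: both combine Lemma \ref{lem:sn}, the concordance invariance from Proposition \ref{concor}, and the Owens--Strle identity $c_s(K)=\min_{K_c} u_s(K_c)$. The only cosmetic difference is that you pick a representative $K_c$ attaining the minimum, while the paper states the bound $sd_+(K)\le 4u_s(K_c)$ for every concordant $K_c$ and then passes to the minimum---the two are interchangeable since $u_s$ is integer-valued and the minimum is attained.
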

\begin{proof}
Let $K_c$ be a knot concordant to $K$. By Lemma \ref{lem:sn}, $sd_+(K_c)\le 4u_s(K_c)$. By Proposition \ref{concor}, $sd_+(K)=sd_+(K_c)\le 4u_s(K_c)$. Since this holds for any $K_c$ that is concordant to $K$, we conclude that $sd_+(K)\le 4 \cdot c_s(K).$
\end{proof}
We can get tighter bounds if we keep track of the signs of the crossing changes as in Example \ref{+casp} below. The {\em positive clasp number} $c_s^+(K)$ is the minimal number of positive double points realized by a normally immersed disk in the $4$-ball with boundary $K$. 
{
\renewcommand{\thetheorem}{\ref{prop:cs+}}
\begin{proposition} 
Let $c_s^+(K)$ be the positive clasp number of $K$. Then,
$$sd_+(K)\le 4 \cdot c_s^+(K).$$
\end{proposition}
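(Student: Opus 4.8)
The plan is to run the same argument used for the $4\cdot c_s(K)$ bound proved just above, but to keep track of the signs of the double points so that only the positive ones are charged. Recall that $c_s^+(K)$ is realized by a normally immersed disk $D\subset B^4$ with $\partial D=K$ having exactly $p:=c_s^+(K)$ positive double points together with some number of negative double points. As in the proof of Lemma \ref{lem:sn}, I want to convert this immersed disk into an honestly embedded disk in some $\CPbar{n}$ whose self-intersection is controlled by $p$ alone.

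First I would push the double points of $D$ out to the boundary and reinterpret them through a sign-refined version of the Owens--Strle identity $c_s(K)=\min_{K_c\sim K}u_s(K_c)$ from \cite[Proposition 2.1]{owens}. Concretely, $K$ is concordant to a knot $K_c$ that is turned into a slice knot by a sequence of generalized crossing changes, in which exactly $p$ of the changes are positive full twists performed along two parallel strands (each twisting disk meeting the knot algebraically twice), while every change arising from a negative double point is arranged to contribute trivially to the resulting slicing homology class. The content of this step is the bookkeeping that matches the sign of a double point of $D$ with the sign of the corresponding generalized crossing change and with the relative orientation of the two strands being twisted.

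Granting this reduction, I would finish exactly as in Lemma \ref{lem:sn}. By \cite[Lemma 2.8]{Kai}, performing the $p$ positive full twists on parallel strands exhibits $K_c$ as $4p$-slice in some $\CPbar{n}$, since each such twist enters the homology class with coefficient $\pm 2$ and contributes $4$ to the self-intersection, whereas the twists coming from negative double points enter with coefficient $0$ and cost nothing. Thus $sd_+(K_c)\le 4p$. Since $sd_+$ is a concordance invariant (Proposition \ref{concor}), $sd_+(K)=sd_+(K_c)\le 4p=4\cdot c_s^+(K)$.

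The main obstacle is the sign-refinement in the middle step. In the negative-definite setting only the positive double points should be ``visible,'' so the crux is to verify that each negative double point of $D$ can be cleared by a move landing in a $\CPbone$ summand with trivial homology class (equivalently, by a twist along antiparallel strands) rather than forcing a $\mathbb{CP}^2$ summand of the wrong sign. A naive blow-up resolution of the double points does not see this distinction, since it would resolve a negative double point into a $\mathbb{CP}^2$; the argument therefore genuinely needs the extra flexibility obtained by passing to a concordant knot, which is why Proposition \ref{concor} together with the Owens--Strle reformulation is essential rather than cosmetic.
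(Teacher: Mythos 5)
There is a genuine gap: your entire proof is carried by the ``sign-refined Owens--Strle'' reduction, which you introduce and then explicitly assume (``Granting this reduction\dots''). The cited result \cite[Proposition 2.1]{owens} is the unsigned identity $c_s(K)=\min_{K_c\sim K}u_s(K_c)$; the statement you actually need --- that $K$ is concordant to a knot $K_c$ made slice by generalized crossing changes, of which exactly $p=c_s^+(K)$ are twists along disks meeting the knot algebraically twice with the correct twist sign, while every change coming from a negative double point can be ``arranged to contribute trivially'' --- is precisely the content of the proposition, and you neither prove it nor cite a signed version. It is not mere bookkeeping: the relative orientation of the two strands at a given crossing is dictated by the diagram, not freely arrangeable, so matching the sign of each double point of $D$ to a twisting disk of the right algebraic intersection number (and the right twist sign, so that only negative twists or homologically trivial twists are used, as \cite[Lemma 2.8]{Kai} requires) demands an actual argument. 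As written, the crux of the proof is assumed.

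Moreover, the ``main obstacle'' you identify, and hence your claim that the concordance detour is essential, rests on a false premise. A naive blow-up resolution does \emph{not} force a $\mathbb{CP}^2$ summand at a negative double point: the link of a negative double point is the negative Hopf link, and the negative Hopf link bounds two disjoint disks in {\small $\CPbone\punc$} whose total homology class is zero. Concretely, present {\small $\CPbone\punc$} as a $(-1)$-framed $2$-handle attached to $B^4$ along an unknot, and run the two sheets through the attaching circle antiparallel, so the two trivial disks acquire classes $+1$ and $-1$; blowing down introduces a positive full twist on antiparallel strands, producing the negative Hopf link on the boundary. (Parallel strands give the positive Hopf link bounding disks of total class $(2)$.) This is exactly the paper's proof: resolve \emph{every} double point, of either sign, by gluing in a punctured $\CPbone$; each positive double point is capped by disks of total class $(2)$, contributing $-4$ to the self-intersection, and each negative one by a null-homologous pair, contributing nothing, so the resolved embedded disk has self-intersection $-4\cdot c_s^+(K)$ in some {\small $\CPbar{n}$}. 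No concordance, and no appeal to \cite{owens}, is needed; your route might well be completable, but it replaces a short local argument with an unproved global reduction whose claimed necessity is mistaken.
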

\addtocounter{theorem}{-1}
}
\begin{proof}

Consider an immersed disk $D$ with boundary $K$ which has $c_s^+(K)$-many positive double points. Choose a neighborhood for each double point of D, which intersects the boundary of the neighborhoods along Hopf links. Since a Hopf link bounds two disjoint disks in a punctured {\small $\CPbone$}, we resolve the singularities by gluing back a  {\small $\CPbone\punc$} and capping off the Hopf links by two disjoint disks. Notice that the disks in  {\small $\CPbone\punc$} with boundary a positive Hopf link is of homology class $(2)$, while the disks bounded by a negative Hopf link is null-homologous. Thus, the resolved embedded disk has self-intersection number $-4 \cdot c_s^+(K)$ in some {\small $\CPbar{n}$}.
\end{proof}

\begin{proposition}
If $sd_+(K)=4u_s(K)$, then $sd_+(m(K))=0$. Similarly, if $sd_+(K)=4u_c(K)$, then $sd_+(m(K))=0$.
\end{proposition}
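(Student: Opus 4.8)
The plan is to reopen the construction behind Lemma~\ref{lem:sn} and extract extra structure from the equality $sd_+(K)=4u_s(K)$. Recall that a minimal slicing sequence realizes $K$ as the result of positive full twists on $u_s(K)$ disks $\Delta_1,\dots,\Delta_{u_s(K)}$, each meeting a slice knot $K_s$ geometrically twice, and that $u_s^+(K)$ counts those $\Delta_i$ meeting $K$ algebraically twice. The proof of Lemma~\ref{lem:sn} supplies the sandwich $sd_+(K)\le 4u_s^+(K)\le 4u_s(K)$. Under the hypothesis the outer terms agree, so $u_s^+(K)=u_s(K)$; that is, \emph{every} twisting disk is parallel (algebraic intersection $\pm2$). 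Consequently each positive twist produces a positive Hopf link, which by the computation in the proof of Proposition~\ref{prop:cs+} resolves to a disk of class $(2)$ in the glued-in $\CPbone\punc$.

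First I would then mirror the whole picture. Mirroring carries $K_s$ to the slice knot $m(K_s)$ and converts each positive full twist into a negative one, while preserving the parallel/antiparallel type of every $\Delta_i$. Thus $m(K)$ is obtained from the slice knot $m(K_s)$ by negative full twists on $u_s(K)$ parallel disks. A negative twist on parallel strands produces a \emph{negative} Hopf link, which---again by the proof of Proposition~\ref{prop:cs+}---bounds null-homologous disks in $\CPbone\punc$. Resolving all the crossing data this way exhibits $m(K)$ as the boundary of a null-homologous disk in some $\CPbar{u_s(K)}$, whence $sd_+(m(K))\le 0$; since $sd_+\ge 0$ always, $sd_+(m(K))=0$. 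Equivalently, via the identity $sd_+(m(K))=-sd_-(K)$ noted earlier, one checks directly that $K$ is $H$-slice in $\CPar{u_s(K)}$, using that a positive Hopf link is null-homologous in the positive-definite $\CPar{1}\punc$.

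For the second statement I would reduce to the first by concordance invariance. Writing $c_s(K)$ for the $4$-dimensional clasp number, Owens and Strle's formula $c_s(K)=\min_{K_c\sim K}u_s(K_c)$ lets me pick $K_c$ concordant to $K$ with $u_s(K_c)=c_s(K)$. Proposition~\ref{concor} gives $sd_+(K_c)=sd_+(K)=4c_s(K)=4u_s(K_c)$, so the first part yields $sd_+(m(K_c))=0$. Since a mirrored concordance shows $m(K_c)\sim m(K)$, Proposition~\ref{concor} again gives $sd_+(m(K))=sd_+(m(K_c))=0$.

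The step I expect to be the main obstacle is the homological bookkeeping in the mirrored picture: I must verify that the extremal condition for $K$ (all strands parallel and all twists positive, each contributing the maximal class $(2)$) turns, after mirroring, into every twist contributing the \emph{trivial} class rather than again contributing class $(2)$. Pinning down the exact correspondence between twist sign, strand orientation, and Hopf-link sign---and hence between null-homologous resolutions and class-$(2)$ resolutions in $\CPbone\punc$---is what must be handled carefully.
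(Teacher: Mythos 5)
Your proposal is correct, and its engine is the same as the paper's: the equality $sd_+(K)=4u_s(K)$ forces every step of a minimal slicing sequence to be of the homologically expensive type, and mirroring the sequence makes every step homologically free, exhibiting an $H$-slice disk for $m(K)$ in some $\CPbar{n}$. The paper records the extremal condition as a condition on crossing signs (``all $u_s(K)$ changed crossings are positive in $D_K$,'' asserted in one sentence), whereas you derive it from the sandwich $sd_+(K)\le 4u_s^+(K)\le 4u_s(K)$ of Lemma~\ref{lem:sn} as $u_s^+(K)=u_s(K)$; since in that lemma's setup each crossing change is realized by a positive full twist, a parallel twisting disk corresponds exactly to a changed crossing that is positive in $D_K$, so the two formulations agree, and the bookkeeping worry you flag at the end is settled exactly as you propose: mirroring flips twist signs but not parallel type, a negative full twist on parallel strands inserts a negative clasp, and by the computation in the proof of Proposition~\ref{prop:cs+} a negative Hopf link is capped off by homologically trivial disks in $\CPbone\punc$, so the resolved disk for $m(K)$ is null-homologous rather than of class $(2)$ per twist. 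Where you go beyond the paper is the second assertion: the paper's written proof only treats the $u_s$ case, while you prove the clasp-number statement (reading $u_c$ as the $4$-dimensional clasp number $c_s$, the only sensible interpretation given the paper's notation) via the Owens--Strle identity $c_s(K)=\min_{K_c} u_s(K_c)$ and two applications of Proposition~\ref{concor}, including the mirrored concordance $m(K_c)\sim m(K)$; this is the natural argument the paper leaves implicit, and spelling it out is a genuine gain. Your parenthetical alternative---showing directly that $K$ is $H$-slice in $\CPar{u_s(K)}$ because positive clasps resolve null-homologously in the positive-definite $\CPar{1}\punc$, then invoking $sd_+(m(K))=-sd_-(K)$---is also valid and is a clean mirror-image repackaging of the same computation.
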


\begin{proof}
Pick the diagram $D_K$ of $K$, which represents a slice knot after $u_s(K)$-many crossing changes. Since $sd_+(K)=4u_s(K)$, we must have that all those crossing changes are positive in $D_K$. Thus, take the mirror diagram $m(D_K)$, which represents the mirror of $K$. Change the same crossings as above. Since we only changed negative crossings to positive crossings, we have thus constructed an $H$-slice disk for $m(K)$.
\end{proof}

However, there exist knots $K$ such that both $sd_+(K)$ and $sd_+(m(K))$ are nonzero, see Example \ref{s0s2} and Example \ref{v00} below.

%%%%%%%%%%%%%%%%%%%%%%%%%%%%%%%%%%%%%%%%%%%%%%%%%%%%%%%%%%%%%%%%%%%%%%%%%%%%%%%%%%%%%%%%%%%%%%%%%%%%%%%%%%%%%%%%%%%%%%%%%%%%
\section{Lower bounds}\label{lowb}
In this section, we explore methods of obstructing a knot $K \subset S^3$ from bounding a disk with certain homology class in $\CPbar{n}\punc$, and give lower bounds for the slicing degree of $K$. 

\subsection{Rasmussen's $s$-invariants}
We use $s_p(K)$ to denote the Rasmussen's $s$-invariant of a knot $K$ over characteristic $p$. For $p=0$, we denote the Rasmussen's $s$-invariant over $\mathbb{Q}$ by either $s(K)$ or $s_0(K)$. Let $K$ be a knot that bounds a disk in $\#^n\overline{\mathbb{CP}^2}$ of class $(a_1,\cdots,a_n)$ in $H^2(\#^n\overline{\mathbb{CP}^2})$. Applying \cite[Corollary 1.5]{Ren} to the disk $D$, we obtain the following proposition. 
{
\renewcommand{\thetheorem}{\ref{prop:sinv}}
\begin{theorem}
If $K$ bounds a disk $D$ in {$ \CPbar{n}\punc$} such that $[D]=(a_1,\cdots, a_n)$ in $H^2(\CPbar{n}; \mathbb{Z})$, then
$$
s_p(K) \le \sum_{i=1}^na_i^2 - \sum_{i=1}^n|a_i|,
$$
where $s_p(K)$ denotes the $s$-invariant over characteristic $p$. 
\end{theorem}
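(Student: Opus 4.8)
The plan is to obtain the inequality as a direct specialization of Ren's adjunction inequality \cite[Corollary 1.5]{Ren} to the genus-zero surface $D$. First I would record the precise form of that adjunction inequality in the geometry at hand: for a properly embedded, connected, oriented surface $\Sigma$ of genus $g$ in $\CPbar{n}\punc$ whose boundary is a knot $K\subset S^3$, one has
$$
s_p(K)\le 2g(\Sigma)-[\Sigma]\cdot[\Sigma]-\sum_{i=1}^n\bigl|\langle[\Sigma],\bar e_i\rangle\bigr|,
$$
where $\bar e_1,\dots,\bar e_n$ denote the standard generators of $H_2(\CPbar{n};\mathbb{Z})$ with $\bar e_i\cdot\bar e_i=-1$. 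Before applying it I would check that $D$ meets the hypotheses: by the conventions fixed in the introduction, $D$ is smooth, properly embedded, and oriented, and it is a disk, hence connected with $g(D)=0$.

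Next I would translate the homological data of $D$ into the two terms on the right-hand side. Writing $[D]=\sum_i a_i\bar e_i$, negative-definiteness of the intersection form gives
$$
[D]\cdot[D]=\sum_{i=1}^n a_i^2\,(\bar e_i\cdot\bar e_i)=-\sum_{i=1}^n a_i^2,
$$
so that $-[D]\cdot[D]=\sum_i a_i^2$. For the pairing term, $\langle[D],\bar e_i\rangle=a_i(\bar e_i\cdot\bar e_i)=-a_i$, whence $\sum_i|\langle[D],\bar e_i\rangle|=\sum_i|a_i|$. Substituting these identities together with $g(D)=0$ into the displayed adjunction inequality yields exactly
$$
s_p(K)\le\sum_{i=1}^n a_i^2-\sum_{i=1}^n|a_i|,
$$
which is the claim. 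As a sanity check I would test the form on the unknot (class $0$ in $B^4$ gives $0\le 0$, and class $(1)$ in $\CPbar{1}\punc$ gives $0\le 1-1=0$) and on $T_{2,3}$ (genus one in $B^4$, or a genus-zero disk of class $(2)$ in $\CPbar{1}\punc$, both giving the sharp bound $s=2$), to confirm that no additive constant intervenes.

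The essential content is carried entirely by \cite[Corollary 1.5]{Ren}, so the only real work is making sure its hypotheses and sign conventions line up with the present setting; this is also where I expect the only genuine obstacle. Ren's statement may be phrased for a positive-definite ambient manifold $\CPar{n}$, or in terms of a characteristic (or minimizing) vector rather than the sum $\sum_i|a_i|$, so I would need to confirm that reversing orientation (passing to $\CPbar{n}$) and pairing with the $\bar e_i$ reproduces the term $-\sum_i|a_i|$ with the correct sign, and that the corollary is valid over every characteristic $p$ and for surfaces with nonempty knot boundary in a punctured definite manifold. Once these conventions are matched, no further estimate is needed and the genus-zero substitution is immediate.
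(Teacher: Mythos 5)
Your proposal is correct and coincides with the paper's argument: the paper likewise proves this theorem by directly applying \cite[Corollary 1.5]{Ren} to the disk $D$, with the genus-zero substitution and the identities $-[D]\cdot[D]=\sum_i a_i^2$ and $\sum_i|\langle[D],\bar e_i\rangle|=\sum_i|a_i|$ implicit. Your extra care with orientation and sign conventions, and the sanity checks on the unknot and $T_{2,3}$, are sound but go beyond what the paper records.
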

\addtocounter{theorem}{-1}
}
Furthermore, one can obtain lower bounds for the slicing degree as follows.
\begin{proposition}\label{stau}
If $s_p(K)$ is positive for some characteristic $p$, then
$$
sd_+(K)\ge s_p(K) + \frac{1}{2} + \sqrt{s_p(K)+\frac{1}{4}}
$$
\end{proposition}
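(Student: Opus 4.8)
The plan is to extract from the adjunction inequality of Theorem~\ref{prop:sinv} a quantitative relation between the self-intersection number of the slicing disk and the sum of the absolute values of its coefficients, and then optimize over the allowed homology classes. First I would set $k=sd_+(K)$ and unwind the definition: $K$ bounds a smoothly embedded disk $D$ in some {$\CPbar{n}\punc$} with self-intersection number $-k$. Writing $[D]=(a_1,\dots,a_n)$ and recalling that the generator of each $\overline{\mathbb{CP}^2}$ summand has self-intersection $-1$, the self-intersection number of $D$ equals $-\sum_{i=1}^n a_i^2$, so the constraint becomes $\sum_{i=1}^n a_i^2 = k$.

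Next I would feed this disk into Theorem~\ref{prop:sinv}, which yields $s_p(K)\le \sum_i a_i^2 - \sum_i |a_i| = k - \sum_i |a_i|$, hence $\sum_i |a_i| \le k - s_p(K)$. To close the argument I need a lower bound on $\sum_i |a_i|$ in terms of $k$, and this is supplied by the elementary comparison between the $\ell^1$ and $\ell^2$ norms: since $\left(\sum_i |a_i|\right)^2 = \sum_i a_i^2 + 2\sum_{i<j}|a_i||a_j| \ge \sum_i a_i^2 = k$, we obtain $\sum_i |a_i| \ge \sqrt{k}$. Combining the two inequalities gives $\sqrt{k}\le k - s_p(K)$, that is, $s_p(K)\le k-\sqrt{k}$.

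The final step is to solve this relation for $k$. Setting $t=\sqrt{k}\ge 0$ converts it into the quadratic condition $t^2 - t - s_p(K)\ge 0$. Here the hypothesis $s_p(K)>0$ is exactly what makes the conclusion clean: it forces the smaller root $\tfrac{1}{2}\bigl(1-\sqrt{1+4s_p(K)}\bigr)$ to be strictly negative, so the only admissible region for $t\ge 0$ is $t\ge \tfrac{1}{2}\bigl(1+\sqrt{1+4s_p(K)}\bigr)$. Squaring and simplifying, $k=t^2 \ge \tfrac{1}{4}\bigl(1+\sqrt{1+4s_p(K)}\bigr)^2 = s_p(K) + \tfrac{1}{2} + \sqrt{s_p(K)+\tfrac{1}{4}}$, which is the desired bound on $sd_+(K)$.

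I do not expect any serious obstacle, since the geometric content is entirely packaged inside Theorem~\ref{prop:sinv} and the remaining work is the norm comparison together with a single quadratic inequality. The only points demanding a little care are the correct identification $\sum_i a_i^2 = k$ from the sign conventions on $\overline{\mathbb{CP}^2}$, and verifying that the positivity of $s_p(K)$ is precisely what selects the positive branch of the quadratic (and in particular guarantees $k>0$, so that $\sqrt{k}$ behaves as expected).
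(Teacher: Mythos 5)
Your proof is correct, and it is exactly the intended argument: the paper states Proposition~\ref{stau} without proof as a direct consequence of Theorem~\ref{prop:sinv}, and your derivation --- writing $\sum_i a_i^2 = k$, applying the adjunction inequality to get $s_p(K)\le k-\sum_i|a_i|$, using the $\ell^1$--$\ell^2$ comparison $\sum_i|a_i|\ge\sqrt{k}$, and solving the resulting quadratic $t^2-t-s_p(K)\ge 0$ in $t=\sqrt{k}$ --- fills in that omitted proof faithfully, including the correct observation that $s_p(K)>0$ rules out the $k=0$ case and selects the positive root.
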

{
\renewcommand{\thetheorem}{\ref{prop:tmm+1}}
\begin{proposition}
Let $m$ be an integer such that $m>1$. Then, $sd_+(T_{m, m+1})=m^2$.
\end{proposition}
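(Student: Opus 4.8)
The plan is to prove the two inequalities $sd_+(T_{m,m+1})\ge m^2$ and $sd_+(T_{m,m+1})\le m^2$ separately, using the $s$-invariant obstruction for the lower bound and an explicit generalized crossing change for the upper bound.

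For the lower bound, I would first recall that the Rasmussen invariant of the positive torus knot is
$$s(T_{m,m+1}) = (m-1)m = m^2-m,$$
which is positive for $m>1$, so Proposition \ref{stau} applies. The key observation is the perfect-square identity
$$s(T_{m,m+1}) + \tfrac14 = m^2 - m + \tfrac14 = \left(m-\tfrac12\right)^2,$$
so that $\sqrt{s(T_{m,m+1})+\tfrac14} = m-\tfrac12$ for $m>1$. Substituting into the bound of Proposition \ref{stau} gives
$$sd_+(T_{m,m+1}) \ge (m^2-m) + \tfrac12 + \left(m-\tfrac12\right) = m^2.$$

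For the upper bound, I would construct an explicit $m^2$-slice disk. The point is that $T_{m,m+1}$ is obtained from the unknot by a single negative generalized crossing change: adding one full twist to the $m$ coherently oriented parallel strands of the standard torus-knot diagram changes $T_{m,q}$ into $T_{m,q\pm m}$, and in particular takes $T_{m,m+1}$ to $T_{m,1}$, the unknot. Since the $m$ strands are coherently oriented, the twisting disk meets the knot algebraically $m$ times, so applying \cite[Lemma 2.8]{Kai} to the unknotting sequence produces a smoothly embedded disk of homology class $(m)$, hence of self-intersection $-m^2$, in {\small $\CPbone\punc$} with boundary $T_{m,m+1}$. This shows $T_{m,m+1}$ is $m^2$-slice in {\small $\CPbone$}, whence $sd_+(T_{m,m+1})\le m^2$. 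Combining the two inequalities yields $sd_+(T_{m,m+1}) = m^2$.

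I expect the lower bound to be essentially immediate once Proposition \ref{stau} and the value $s(T_{m,m+1})=m^2-m$ are in hand, the only care being the square-root simplification. The substantive step, and the main place to be careful, is the upper bound: one must verify that a single full twist on exactly $m$ strands unknots $T_{m,m+1}$, and that the twisting disk has algebraic intersection number precisely $m$ (so that \cite[Lemma 2.8]{Kai} yields the class $(m)$, and not some smaller class arising from sign cancellation among the strands). I would therefore double-check the sign and strand-count bookkeeping and confirm that the coherent orientation of the braid strands forces the algebraic intersection to equal $m$.
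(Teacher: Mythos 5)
Your proposal is correct and matches the paper's proof in both halves: the lower bound is the same application of Proposition \ref{stau} with $s(T_{m,m+1})=m^2-m$ and the perfect-square simplification $\sqrt{m^2-m+\tfrac14}=m-\tfrac12$, and your upper bound via a single negative full twist on the $m$ coherently oriented strands is exactly the paper's construction, which just spells out the mechanism of \cite[Lemma 2.8]{Kai} explicitly (attach a $(-1)$-framed $2$-handle around the $m$ strands of the unknotted closure, blow down to get $T_{m,m+1}$, and cap the trace annulus to get a disk of class $(m)$ with self-intersection $-m^2$ in {\small $\CPbone\punc$}). Your sign and orientation bookkeeping, including the algebraic intersection number $m$ forced by coherent orientation, agrees with the paper's.
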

\addtocounter{theorem}{-1}
}
\begin{figure}[h]
{
   \fontsize{9pt}{11pt}\selectfont
   \def\svgwidth{1.6in}
   \begin{center}
   %% Creator: Inkscape 1.2.2 (b0a84865, 2022-12-01), www.inkscape.org
%% PDF/EPS/PS + LaTeX output extension by Johan Engelen, 2010
%% Accompanies image file '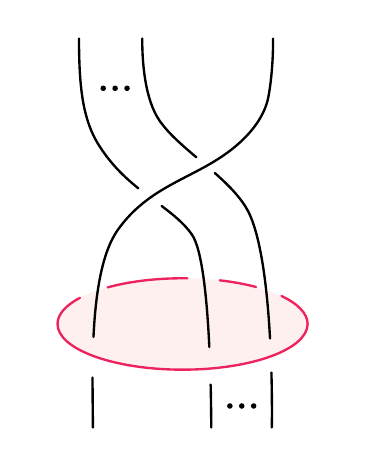' (pdf, eps, ps)
%%
%% To include the image in your LaTeX document, write
%%   \input{<filename>.pdf_tex}
%%  instead of
%%   \includegraphics{<filename>.pdf}
%% To scale the image, write
%%   \def\svgwidth{<desired width>}
%%   \input{<filename>.pdf_tex}
%%  instead of
%%   \includegraphics[width=<desired width>]{<filename>.pdf}
%%
%% Images with a different path to the parent latex file can
%% be accessed with the `import' package (which may need to be
%% installed) using
%%   \usepackage{import}
%% in the preamble, and then including the image with
%%   \import{<path to file>}{<filename>.pdf_tex}
%% Alternatively, one can specify
%%   \graphicspath{{<path to file>/}}
%% 
%% For more information, please see info/svg-inkscape on CTAN:
%%   http://tug.ctan.org/tex-archive/info/svg-inkscape
%%
\begingroup%
  \makeatletter%
  \providecommand\color[2][]{%
    \errmessage{(Inkscape) Color is used for the text in Inkscape, but the package 'color.sty' is not loaded}%
    \renewcommand\color[2][]{}%
  }%
  \providecommand\transparent[1]{%
    \errmessage{(Inkscape) Transparency is used (non-zero) for the text in Inkscape, but the package 'transparent.sty' is not loaded}%
    \renewcommand\transparent[1]{}%
  }%
  \providecommand\rotatebox[2]{#2}%
  \newcommand*\fsize{\dimexpr\f@size pt\relax}%
  \newcommand*\lineheight[1]{\fontsize{\fsize}{#1\fsize}\selectfont}%
  \ifx\svgwidth\undefined%
    \setlength{\unitlength}{186.25574067bp}%
    \ifx\svgscale\undefined%
      \relax%
    \else%
      \setlength{\unitlength}{\unitlength * \real{\svgscale}}%
    \fi%
  \else%
    \setlength{\unitlength}{\svgwidth}%
  \fi%
  \global\let\svgwidth\undefined%
  \global\let\svgscale\undefined%
  \makeatother%
  \begin{picture}(1,1.21900848)%
    \lineheight{1}%
    \setlength\tabcolsep{0pt}%
    \put(0,0){\includegraphics[width=\unitlength,page=1]{torus.pdf}}%
    \put(0.86516296,0.39040113){\color[rgb]{0.92941176,0.1372549,0.37254902}\makebox(0,0)[lt]{\lineheight{1.25}\smash{\begin{tabular}[t]{l}$-1$\end{tabular}}}}%
    \put(0,0){\includegraphics[width=\unitlength,page=2]{torus.pdf}}%
  \end{picture}%
\endgroup%

   \end{center}
   \caption{}
   \label{torus}

}
\end{figure}
\begin{proof}
The $s$-invariant of the torus knot $T_{m,m+1}$ is $m(m-1)$. When $m>1$, by Proposition \ref{stau}, we have that $sd_+(T_{m, m+1})\ge m^2$. On the other hand, this lower bound can be achieved by a disk with self-intersection $-m^2$ in $\overline{\mathbb{CP}^2}$ as follows. 

Attach a $(-1)$-framed $2$-handle along $m$ vertical strands whose closure is an unknot (see Figure \ref{torus}). This gives us a Kirby diagram of $\overline{\mathbb{CP}^2}\pint{B^4 \cup B^4}$ with the unknot lying on the lower boundary. Blow down the $(-1)$-framed unknot and obtain the torus knot $T_{m,m+1}$ on the upper boundary.  Now, construct the disk by capping off the lower boundary of the annulus traced by the knot $T_{m, m+1}$. Since the disk intersect the core of the two handle algebraically $m$ times, the disk has self-intersection $-m^2$. 
\end{proof}

Cotton Seed first found knots whose $s_2$-invariant is different from the $s$-invariant, with knot $K14n19265$ being one of those knots. Note that the adjunction inequality \cite[Corollary 1.5]{Ren} for the $s$-invariant holds over any coefficient field. Thus, one can build knots $K$ such that both $K$ and the mirror $m(K)$ have arbitrarily large slicing degree.
\begin{example}\label{s0s2} 
Let $K$ be the $3$-twisted Whitehead double on the right-handed trefoil $W_3^+ (T_{2,3})$. From Table 4 \cite{lewark}, we have that $s_2(K) = 2$ and $s(K) = 0$. Consider $$J_m = \#^{2m} K \#^m T_{-2,3}.$$ Since $s_2(J_m)=2m$ and $s(J_m) = -2m$, we have that both $sd_+(J_m)$ and $sd_+(m(J_m))$ is greater than $2m$ by Proposition \ref{stau}.
\end{example}

From Theorem \ref{prop:sinv}, we can get sharper bound than that in Proposition \ref{stau} if we work with specific homology classes. 
\begin{proposition} \label{813}
 If $s_p(K)=4$ for some characteristic $p$, then $sd_+(K) \ge 8$. If $s_p(K)=8$ for some characteristic $p$, then $sd_+(K) \ge 13$.
\end{proposition}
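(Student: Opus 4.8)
The plan is to combine the adjunction inequality of Theorem~\ref{prop:sinv} with an elementary integer optimization. Suppose $sd_+(K)=k$, and fix a class $(a_1,\dots,a_n)\in H^2(\CPbar{n};\mathbb{Z})$ realizing a disk $D$ bounded by $K$ with self-intersection $-k$, so that $\sum_{i=1}^n a_i^2=k$. Theorem~\ref{prop:sinv} then forces
$$
s_p(K)\le \sum_{i=1}^n a_i^2-\sum_{i=1}^n|a_i|=\sum_{i=1}^n|a_i|\bigl(|a_i|-1\bigr).
$$
Thus every realizable class must satisfy the constraint $\sum_i |a_i|(|a_i|-1)\ge s_p(K)$, and since $sd_+(K)$ is the minimum of $\sum_i a_i^2$ over realizable classes, it is bounded below by the minimum of $\sum_i a_i^2$ over all integer vectors obeying this constraint.

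This reduces the proposition to a finite arithmetic problem: minimize $\sum_i a_i^2$ subject to $\sum_i |a_i|(|a_i|-1)\ge c$, where $c=s_p(K)$. First I would discard entries with $|a_i|\le 1$, since they add nothing to the left-hand side of the constraint while increasing $\sum_i a_i^2$; so an optimal vector has all entries of absolute value at least $2$. Writing $b_i=|a_i|\ge 2$, each entry contributes $b_i^2$ to the cost and $b_i(b_i-1)$ to the constraint, and I would then enumerate the finitely many ways to meet the constraint, bounding the search by noting that a configuration cheaper than the proposed optimum $C^*$ has at most $C^*/4$ entries, each of size at most $\sqrt{C^*}$.

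For $c=4$ the cheapest admissible configuration is $(2,2)$, giving cost $8$ and constraint $2+2=4$; a direct check shows that no decomposition with $\sum_i a_i^2\le 7$ reaches constraint $4$ (two entries of size $\ge 2$ already cost $\ge 8$, and a single entry needs size $\ge 3$ hence cost $\ge 9$), so $sd_+(K)\ge 8$. For $c=8$ the cheapest admissible configuration is $(3,2)$, giving cost $13$ and constraint $6+2=8$; one verifies that every vector with $\sum_i a_i^2\le 12$ has $\sum_i|a_i|(|a_i|-1)\le 6<8$, so $sd_+(K)\ge 13$.

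The only real obstacle is the optimization step, and its subtlety is that the integer optimum does not agree with the continuous relaxation underlying Proposition~\ref{stau}: that bound is sharpest when all the ``mass'' sits on a single coordinate, whereas here it is cheaper to split the class across two coordinates of size $2$ or $3$ (the configurations $(2,2)$ and $(3,2)$ beat the single-entry classes of costs $9$ and $16$). Making the finite enumeration airtight—ruling out all cheaper configurations rather than merely exhibiting the optimum—is the one place requiring care, but it is a bounded case check rather than a genuine difficulty.
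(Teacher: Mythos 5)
Your proof is correct and is essentially the paper's own argument: both rest on Theorem \ref{prop:sinv} together with a finite check of sum-of-squares decompositions, and both identify the same extremal classes $(2,2)$ and $(2,3)$ as the cheapest ways to satisfy $\sum_i |a_i|(|a_i|-1)\ge s_p(K)$. The only organizational difference is that the paper first invokes Proposition \ref{stau} to reduce to ruling out the single values $k=7$ and $k=12$, whereas you run the integer optimization over all classes at once after discarding entries with $|a_i|\le 1$ --- a harmless repackaging of the same finite enumeration.
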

\begin{proof}
Suppose $s_p(K)=4$. By Proposition \ref{stau}, we have that $sd_+(K) \ge 7$. One can write $7$ as the sum of squares in two ways: $7=1+\cdots+1=2^2+1+1+1$. However, none of them does not satisfy the inequality in Theorem \ref{prop:sinv}. Thus, $sd_+(K) \ge 8$.

Similarly, suppose $s_p(K)=8$. By Proposition \ref{stau}, we have that $sd_+(K) \ge 12$. Suppose $K$ is $k$-slice in some $\#^n\overline{\mathbb{CP}^2}$. For $k=12$, we have that $k=1+\cdots +1=2^2+1+\cdots +1=2^2+2^2+1+\cdots +1=2^2+2^2+2^2=3^2+1+1+1$. Theorem \ref{prop:sinv} obstructs the existence of slicing disks in such homology classes. Thus, $sd_+(K) \ge 13$.  
\end{proof}

Let $W$ be a negative-definite 4-manifold with $b_1(W)=0$ and $b_2(W)=n$. Suppose $K$ bounds a disk $D$ in $W\punc$ such that the homology class of $D$ is $(a_1,...,a_n)$ in $H^2(W;\mathbb{Z})$. Let $\beta$ be a knot invariant such that the adjunction inequality 
\begin{equation}\label{adjuntau}
2\beta(K) \le \sum_{i=1}^na_i^2 - \sum_{i=1}^n|a_i|,
\end{equation} 
holds for any choice of $K$, $n$ and $a_i$'s. Examples of $\beta$ are $s_p$ for any $p$. Others are $2\tau$ and $2\nu_+$, see Corollary \ref{nu+}. For small values of $\beta$, we tabulate the lower bounds of the slicing degree, along with the homology class (excluding zero terms) where the bounds are achieved.
\begin{center}

\begin{tabular}{||c | c | c ||} 
 \hline 
 $\beta(K)$ & $sd_+(K)\ge$ & $(a_1,\cdots,a_n)$ \\ [0.5ex] 
 \hline\hline
 2 & $ 4$ & $(2)$  \\ 
 \hline
 4 & $ 8$ & $(2,2)$ \\
 \hline
 6 & $ 9$ & $(3)$\\
 \hline
 8 & $ 13$ & $(2,3)$  \\
  \hline
 10 & $ 16$ & $(4)$\\
 \hline
 12 & $ 16$ & $(4)$\\
   \hline
   14 & $ 20$ & $(2,4)$ \\
 \hline
 16 & $ 24$ & $(2,2,4)$\\
 \hline
\end{tabular}
\end{center}

%%%%%%%%%%%%%%%%%%%%%%%%%%%%%%%%%%%%%%%%%%%%%%%%%%%%
\subsection{Heegaard Floer invariants $\{V_s\}_{s\ge 0}$}\label{heegaard}
 Let $W$ be a negative-definite smooth 4-manifold with $b_1(W)=0$ and $b_2(W) = n$. Suppose there exists a disk $D$ such that $(D,K)$ is smoothly properly embedded in $(W^{\circ}, \partial W^{\circ})$, where $W^{\circ}=W\punc$. Denote the homology class of $D$ in $H^2(W)$ by $(a_1,\cdots, a_n)$ and let $k$ be $\sum_{i=1}^na_i^2$. Then the self-intersection number of $D$ is $[D]\cdot[D]= -\sum_{i=1}^na_i^2=-k$. 

The inequality in \cite[Theorem 9.6]{OSdinv} obstructs the disk $D$ from being in certain non-zero homology class (i.e, $k\neq 0$). Denote the disk exterior $W^{\circ}\setminus \nu(D)\cong W\setminus -X_k(K)$ by $E(D)$. Since $E(D)$ is negative-definite and bounds a rational homology ball $S^3_k(K)$, we apply Theorem 9.6 in \cite{OSdinv} to the pair $(E(D), S^3_k(K))$.

\begin{proposition}\label{thm:dinv}
Suppose $K$ bounds a disk $D$ in $W^{\circ}=W \punc$ such that $[D]=(a_1,\cdots,a_n) \neq 0$. Then, for any spin$^c$-structure $\mathfrak{s}=(\lambda_1,\cdots, \lambda_n)$ such that $\lambda_i$'s are odd numbers, 
\begin{equation}\label{dinv}
- \sum_{i=1}^n \lambda_i^2 + \frac{1}{k} (\sum_{i=1}^n \lambda_i a_i)^2 + n - 1 \le 4d(S_k^3(K), \mathfrak{t}),
\end{equation}
where $\mathfrak{t}$ is a restriction of the spin$^c$ structure $\mathfrak{s}$ on $W$ to $\partial E(D)$.
\end{proposition}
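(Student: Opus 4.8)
The plan is to apply Ozsváth–Szabó's Theorem 9.6 from \cite{OSdinv} directly to the pair $(E(D), S^3_k(K))$, where $E(D) = W^{\circ}\setminus\nu(D)$ is the disk exterior. The first step is to verify the hypotheses of that theorem: since $W$ is negative-definite with $b_1(W)=0$ and we remove a disk $D$ with $[D]\cdot[D] = -k \neq 0$, the exterior $E(D)$ is again negative-definite (removing a sphere neighborhood of a disk of negative self-intersection preserves negative-definiteness, as the added boundary class has negative square), has $b_1 = 0$, and its boundary $\partial E(D) = -S^3_k(K)$ is a rational homology sphere since $k\neq 0$. These are precisely the conditions under which the $d$-invariant inequality applies.

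The core of the argument is then a bookkeeping computation relating the intersection form of $E(D)$ to that of $W$. I would set up a basis for $H^2(E(D);\mathbb{Z})$ coming from the generators $e_1,\dots,e_n$ of $H^2(W)$ together with the meridian/dual class of $D$, and track how a spin$^c$ structure $\mathfrak{s}=(\lambda_1,\dots,\lambda_n)$ on $W$ restricts to $E(D)$ and further to the boundary $\partial E(D)$. The inequality in Theorem 9.6 has the form $c_1(\mathfrak{s})^2 + b_2(E(D)) \le 4 d(\partial E(D), \mathfrak{t}) + \text{(correction)}$; the left-hand side involves the square of $c_1$ evaluated against the intersection form of $E(D)$. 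Here $b_2(E(D)) = n-1$ (we have $b_2(W^{\circ}) = n$ and removing the disk, whose class is nontrivial, drops the rank by one while introducing the boundary). The quantity $\sum_i \lambda_i^2$ is the square of $c_1(\mathfrak{s})$ on $W$, and the term $\frac{1}{k}(\sum_i \lambda_i a_i)^2$ is exactly the correction arising from the self-intersection of $D$: projecting $c_1(\mathfrak{s})$ onto the subspace perpendicular to $[D]$ (with respect to the form) subtracts off the component along $[D]$, whose norm-squared is $\frac{(\sum_i\lambda_i a_i)^2}{[D]\cdot[D]} = -\frac{1}{k}(\sum_i\lambda_i a_i)^2$. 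Assembling these pieces and recording the sign conventions yields the stated inequality \eqref{dinv}.

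The main obstacle will be the careful homological and spin$^c$ accounting: correctly identifying the intersection form on $E(D)$, confirming that the restriction $\mathfrak{t}$ of $\mathfrak{s}$ to the boundary is well-defined and matches the $d$-invariant appearing in Ni–Wu's formula, and nailing down all the sign and orientation conventions (since $\partial E(D)$ carries the \emph{reversed} orientation of $S^3_k(K)$). The geometric input — that $E(D)$ is negative-definite and bounds the surgery — is essentially immediate; the delicate part is ensuring the algebraic expression $-\sum\lambda_i^2 + \frac{1}{k}(\sum\lambda_i a_i)^2 + n - 1$ emerges with the correct constant $n-1$ rather than $n$ or $n-2$, which is where a miscounted Betti number would introduce an off-by-one error. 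Once the orthogonal decomposition $c_1(\mathfrak{s}) = c_1(\mathfrak{s})^{\perp} + (\text{component along }[D])$ is set up cleanly, the computation of $c_1(\mathfrak{s}|_{E(D)})^2$ reduces to the stated formula, and Theorem 9.6 closes the argument.
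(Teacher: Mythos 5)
Your proposal takes essentially the same route as the paper's proof: apply Ozsv\'ath--Szab\'o's Theorem 9.6 from \cite{OSdinv} to the disk exterior $E(D)$, compute $rk(H^2(E(D);\mathbb{Z}))=n-1$ (the paper does this via a Mayer--Vietoris sequence for $W^{\circ}=E(D)\cup\nu(D)$, using $[D]\neq 0$ to see the rank drops by exactly one), and evaluate $c_1\left(\mathfrak{s}|_{E(D)}\right)^2$. Your orthogonal-projection computation is the paper's calculation in different packaging: since $\partial E(D)$ is a rational homology sphere, $H^2(W;\mathbb{Q})$ splits orthogonally as $H^2(E(D);\mathbb{Q})\oplus H^2(-X_k(K);\mathbb{Q})$, the second summand is rationally spanned by the class dual to $[D]$ with square $-\frac{1}{k}$, and subtracting $c_1\left(\mathfrak{s}|_{-X_k(K)}\right)^2=-\frac{1}{k}\left(\sum_i\lambda_i a_i\right)^2$ from $c_1(\mathfrak{s})^2=-\sum_i\lambda_i^2$ is exactly your projection onto the complement of $[D]$. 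So the substance matches; the paper additionally records that every spin$^c$ structure on $E(D)$ extends over $W$, which is not needed for the inequality as stated (it matters for the later optimization in Proposition \ref{opti}).

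One point you must resolve rather than merely flag: you assert $\partial E(D)=-S^3_k(K)$. Under the paper's conventions $E(D)\cong W\setminus -X_k(K)$, so the oriented boundary is $S^3_k(K)$, and Theorem 9.6 is applied to the pair $\left(E(D), S^3_k(K)\right)$. This sign is not a bookkeeping footnote: since $d(-Y,\mathfrak{t})=-d(Y,\mathfrak{t})$, if the oriented boundary really were $-S^3_k(K)$ then Theorem 9.6 would bound $c_1\left(\mathfrak{s}|_{E(D)}\right)^2+n-1$ above by $-4d\left(S^3_k(K),\mathfrak{t}\right)$, which is a different (and for the later Ni--Wu applications, useless) inequality --- the whole point is to land on positive surgery on $K$ itself so that the invariants $V_j(K)$ appear. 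Your proof sketch leaves this as ``the delicate part,'' but as written it commits to the orientation that would make the argument fail, so it needs to be corrected, not just noted. A second, minor imprecision: Theorem 9.6 of \cite{OSdinv} carries no extra ``correction'' term --- it reads $c_1(\mathfrak{s})^2+rk(H^2(X;\mathbb{Z}))\le 4d(Y,\mathfrak{t})$ --- and Ni--Wu's formula plays no role in this proposition (it enters only afterwards, in converting $d\left(S^3_k(K),\mathfrak{t}\right)$ into the $V_j$ invariants).
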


\begin{proof}
First, compute the rank of $H^2(E(D);\mathbb{Z})$. Take a tubular neighborhoods $\nu(D)$ of $D$ in $W^{\circ}$.  Shrink $\nu(D)$ uniformly in the normal direction and denote the smaller tubular neighborhood by $\nu^{\prime}(D)$. Let $W^{\circ}\pint{\nu^{\prime}(D)}$ be the disk exterior $E(D)$. The intersection $C=E(D) \cap {\nu}(D)$ is $S^1\times D^1\times D^2$. Consider the Mayer–Vietoris sequence given by $W^{\circ}=E(D)\cup {\nu}(D)$ as follows.
\begin{center}
\begin{tikzcd}[cramped, sep=scriptsize]
  0\cong H_2(C) \arrow[r] & H_2(E(D))\oplus H_2(D^2\times D^2) \arrow[r]\arrow[d,"\cong"] &H_2(W^{\circ})\arrow[r,"\partial"]\arrow[d,"\cong"] &H_1(C)\arrow[d,"\cong"]\arrow[r] &H_1(E(D))\arrow[r] &0\\
   & H_2(E(D))\arrow[r] &\mathbb{Z}^n \arrow[r,"\alpha"] & \mathbb{Z}
\end{tikzcd}
\end{center}
Here, the map $\alpha$ is given by taking inner product with $[D]=(a_1,\cdots ,a_n)$, i.e. $\alpha(x_1,\cdots,x_n)=\sum_i a_ix_i$. Since $[D]\neq 0$, we have that $rk(H_2(E(D); \mathbb{Z})=n-1$ and $H_1(E(D); \mathbb{Z})\cong \mathbb{Z}/l$, where $l$ is the greatest common divisor of $a_i$'s. It follows from Universal Coefficient Theorem that $rk(H^2(E(D); \mathbb{Z}))=n-1$.

Next, pick a spin$^c$ structure on $E(D)$. The spin$^c$ structures on a $E(D)$ correspond to characteristic elements in $H^2(E(D);\mathbb{Z})$. 
Consider the exact sequence:
\begin{center}
\begin{tikzcd}[cramped, sep=scriptsize]
H^2(W)\arrow[r] & H^2(E(D))\oplus H^2(X_k(K))\arrow[r, "\beta"] &H^2(S^3_k(K))\arrow[r] &H^3(W)\cong 0.
\end{tikzcd}
\end{center}
Since each spin$^c$ structure on $S^3_k(K)$ extends over the trace $X_n(K)$, we have that each spin$^c$ structure on $E(D)$ is a restriction from a spin$^c$ structure on $W$. Each characteristic element $\mathfrak{s}$ in $H^2(W)$ is of the form $(\lambda_1, \cdots, \lambda_n)$ with each $\lambda_i$ being an odd integer. Denote the restriction of the spin$^c$ structure $\mathfrak{s}=(\lambda_1, \cdots, \lambda_n)$ to $E(D)$ by $\mathfrak{s}|_{E(D)}$.

Since $S^3_k(K)$ is a rational homology sphere, $c_1\left(\mathfrak{s}|_{E(D)}\right)^2 + c_1\left(\mathfrak{s}|_{-X_k(K)}\right)^2=c_1(\mathfrak{s})^2$, which equals to $-\sum \lambda_i^2$. Let $\widehat{F}$ be the cocore of the 2-handle in $-X_k(K)$ capped with a Seifert surface $F$ of the knot $K$ in $S^3$. Under the isomorphism $H^2(-X_k(K);\mathbb{Z})\cong\mathbb{Z}[\widehat{F}]\cong \mathbb{Z}$, we have that $c_1\left(\mathfrak{s}|_{-X_k(K)}\right)=-\sum \lambda_i a_i$ and that the intersection form on $-X_k(K)$ is $(-\frac{1}{k})$.
Thus, one computes that $c_1\left(\mathfrak{s}|_{E(D)}\right)^2=-\sum \lambda_i^2+\frac{1}{k}(\sum \lambda_i a_i)^2$.

The result follows from applying \cite[Theorem 9.6]{OSdinv} to $E(D)$, i.e.
$$
c_1(\mathfrak{s}|_{E(D)})^2 + rk(H^2(E(D);\mathbb{Z}))\le 4d(S_k^3(K),\mathfrak{t}).
$$
\end{proof}

Recall that the invariants $\{V_s\}_{s\ge 0}$ form a knot $K$ is a non-increasing sequence of  non-negative integers, defined out of the full knot Floer complex of $K$. Denote the full complex $CFK^{\infty}(K)$ by $C$, and let $A_s^{+}=C\{\max\{i,j-s\}\ge 0\}$ and $B^{+}=C\{i\ge 0\}$. Consider the projection chain map $v_s^{+}: A_s^+ \rightarrow B^+$ as in \cite{OS08}, and let $\mathfrak{v}_s^+$ be the induced map on homology by $v_s^+$. Since $\mathfrak{v}_s^{+}$ is a graded isomorphisms at sufficiently high gradings and is $U$-equivariant, it is modeled on multiplication by $U^{V_s}$. 
Ni and Wu in \cite{NW} computed a formula for the $d$-invariants of $S^3_{p/q}(K)$ ($p,q>0$) in terms of the $d$-invariants of the lens space $L(p,q)$ and the $\{V_s\}_{s\ge 0}$ invariants of the knot $K$. 
Thus, the inequality \eqref{dinv} can be rewritten as follows.
\begin{proposition}\label{opti}
Suppose $K$ bounds a disk $D$ in $W^{\circ}$ such that $[D]=(a_1,\cdots,a_n) \neq 0$. Then, for any $(\lambda_1,\cdots, \lambda_n)$ such that $\lambda_i$'s are odd numbers,
\begin{equation}\label{family}
k(\sum_{i=1}^n \lambda_i^2)- (\sum_{i=1}^n \lambda_i a_i)^2+(2j-k)^2 - kn \ge 8k \cdot \max\{V_j(K),V_{k-j}(K)\},
\end{equation}
where $j$ is in $\mathbb{Z}/k\mathbb{Z}$ such that $\sum_{i=1}^n \lambda_i a_i \equiv k-2j \pmod{2k}$.
\end{proposition}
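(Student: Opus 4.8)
The plan is to substitute Ni and Wu's surgery formula for $d(S^3_k(K),\mathfrak t)$ into the inequality \eqref{dinv} established in Proposition \ref{thm:dinv} and then clear denominators. Since $[D]\neq 0$ we have $k=\sum_{i=1}^n a_i^2>0$, so I would first multiply \eqref{dinv} through by $k$, which preserves the inequality and turns it into
\begin{equation*}
-k\sum_{i=1}^n\lambda_i^2+\Big(\sum_{i=1}^n\lambda_i a_i\Big)^2+k(n-1)\le 4k\,d(S^3_k(K),\mathfrak t).
\end{equation*}
Everything then reduces to evaluating the right-hand side in closed form.

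Next I would invoke Ni and Wu's formula \cite[Proposition 1.6]{NW} in the case $p=k$, $q=1$, which reads $d(S^3_k(K),\mathfrak t)=d(S^3_k(U),\mathfrak t)-2\max\{V_j(K),V_{k-j}(K)\}$, together with the standard evaluation of the correction terms of the lens space $S^3_k(U)=L(k,1)$, namely $d(S^3_k(U),\mathfrak t)=\tfrac{(2j-k)^2}{4k}-\tfrac14$ for the spin$^c$ structure labelled by $j$. Multiplying by $4k$ gives $4k\,d(S^3_k(K),\mathfrak t)=(2j-k)^2-k-8k\max\{V_j(K),V_{k-j}(K)\}$. Substituting this into the displayed inequality and collecting the constant terms $-k-k(n-1)=-kn$ yields exactly \eqref{family} once the $V$-term is moved to the other side; the algebra here is a direct and routine substitution.

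The step requiring genuine care is the spin$^c$ bookkeeping that identifies the label $j$ appearing in the Ni--Wu formula and in the lens-space correction term with the restriction $\mathfrak t$ of $\mathfrak s=(\lambda_1,\dots,\lambda_n)$. In the proof of Proposition \ref{thm:dinv} the class $\mathfrak s|_{-X_k(K)}$ was computed to have $c_1=-\sum_{i=1}^n\lambda_i a_i$ under the identification $H^2(-X_k(K);\mathbb Z)\cong\mathbb Z[\widehat F]$, whereas the standard $\{0,\dots,k-1\}$ labelling of $\mathrm{Spin}^c(S^3_k(K))$ assigns to $j$ the first Chern number $2j-k$ modulo $2k$. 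Matching these two descriptions is precisely what forces the congruence $\sum_{i=1}^n\lambda_i a_i\equiv k-2j\pmod{2k}$ stated in the proposition, and one must check that the surgery and trace orientations are consistent so that the sign in $c_1$ and the orientation of $L(k,1)$ come out correctly. Finally I would note that both $(2j-k)^2$ and $\max\{V_j(K),V_{k-j}(K)\}$ are invariant under $j\mapsto k-j$, which is the conjugation symmetry of spin$^c$ structures and confirms that the right-hand side of \eqref{family} is well defined as a function of $j\in\mathbb Z/k\mathbb Z$. The main obstacle is thus not the computation but pinning down this identification of spin$^c$ structures and the orientation conventions relating $S^3_k(K)$, the trace $-X_k(K)$, and the lens space $L(k,1)$.
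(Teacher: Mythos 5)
Your proposal is correct and follows essentially the same route as the paper: substitute the closed-form $d$-invariant of $S^3_k(K)$ into the inequality of Proposition \ref{thm:dinv} and match the spin$^c$ label $j$ via $\langle c_1(\mathfrak{s}|_{-X_k(K)}),[\widehat F]\rangle=-\sum\lambda_i a_i$, with the conjugation symmetry $j\mapsto k-j$ absorbing the sign ambiguity (the paper's ``up to multiplication by $\pm 1$''). The only cosmetic difference is that you assemble the formula $d(S^3_k(K),\mathfrak t_j)=-2\max\{V_j,V_{k-j}\}+\frac{(k-2j)^2-k}{4k}$ yourself from Ni--Wu and $d(L(k,1),j)$, whereas the paper cites it directly from \cite[Theorem 2.1]{Bod}.
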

\begin{proof}
Let $\mathfrak{t}_j$ be the spin$^c$ structure on $S_k^3(K)$, which extends to a spin$^c$ structure $\mathfrak{s}$ such that $\langle c_1(\mathfrak{s}),[\widehat{F}]\rangle \equiv k+2j \pmod {2k}$. \cite[Theorem 2.1]{Bod} computes that 
$$
d(S_k^3(K), \mathfrak{t}_j) = -2\max\{V_j(K),V_{k-j}(K)\} + \frac{(k-2j)^2-k}{4k}
$$
Suppose that $\mathfrak{t}$ is a restriction of the spin$^c$ structure $\mathfrak{s}=(\lambda_1,\cdots,\lambda_n)$ as in Proposition \ref{thm:dinv}. Consider $\mathfrak{s}|_{-X_k(K)}$ on $-X_k(K)$, which restrict to $\mathfrak{t}$ on $-S^3_k(K)$. Let $\widehat{F}$ be the surface in $-X_k(K)$ obtained by capping off a Seifert surface $F$ of the knot $K$. Since $\langle c_1\left(\mathfrak{s}_{-X_k(K)}\right), [\widehat{F}]\rangle= \langle c_1(\mathfrak{s}), [\widehat{F}]\rangle=-\sum \lambda_i a_i$, we have that $\mathfrak{t}=\mathfrak{t}_j$, where $j\in\mathbb{Z}/k\mathbb{Z}$ satisfies that $k+2j \equiv-\sum \lambda_i a_i \pmod {2k}$ up to multiplication by $\pm 1$. Then, apply Proposition \ref{thm:dinv}.
\end{proof}

Given any homology class $[D]$ of the disk $D$, for each the spin$^c$ structure (i.e. $\lambda_i$'s) from the odd-integer lattice, there exists an inequality as in Proposition \ref{opti}. Thus, we can consider optimizing those inequalities among all the $\lambda_i$'s.
{
\renewcommand{\thetheorem}{\ref{lem:V_s}}
\begin{theorem}
If $K$ bounds a disk $D$ in $W$ such that $[D]=(a_1,\cdots, a_n)$, then for $(\lambda_1,\cdots,\lambda_n)$ such that $\lambda_i$'s are odd and $0 \le \sum_{i=1}^n \lambda_i a_i \le k$, we have that
$$\sum_{i=1}^n \lambda_i^2-n\ge 8V_j(K),$$
where $j=\frac{1}{2}\left(k-\sum \lambda_ia_i\right)$.
\end{theorem}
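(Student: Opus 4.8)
The plan is to derive the statement directly from Proposition \ref{opti} by specializing to the range $0 \le \sum_{i=1}^n \lambda_i a_i \le k$, in which the residue class of $j$ is pinned down by an honest integer and the maximum on the right-hand side collapses to a single term. Throughout I write $S = \sum_{i=1}^n \lambda_i a_i$ and $k = \sum_{i=1}^n a_i^2$, and I adopt the standing hypothesis $[D] \neq 0$ of Proposition \ref{opti}, so that $k > 0$.

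First I would pin down the integer $j$. Since each $\lambda_i$ is odd, $\lambda_i a_i \equiv a_i \pmod 2$, and therefore $S \equiv \sum_i a_i \equiv \sum_i a_i^2 = k \pmod 2$; thus $k - S$ is even and $j := \tfrac12(k - S)$ is an integer. Because $S = k - 2j$, this $j$ satisfies the congruence $S \equiv k - 2j \pmod{2k}$ required in Proposition \ref{opti} exactly. Moreover the hypothesis $0 \le S \le k$ translates into $0 \le j \le k/2$, so in particular $j \le k - j$.

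Next I would simplify both sides of the inequality in Proposition \ref{opti}. As $\{V_s\}_{s\ge 0}$ is a non-increasing sequence of non-negative integers and $j \le k - j$, we get $V_j(K) \ge V_{k-j}(K)$, so $\max\{V_j(K), V_{k-j}(K)\} = V_j(K)$. On the left, $2j - k = -S$, hence $(2j-k)^2 = S^2$ cancels the term $-(\sum_{i=1}^n \lambda_i a_i)^2 = -S^2$. Substituting these simplifications into the inequality of Proposition \ref{opti} yields
$$k\Big(\sum_{i=1}^n \lambda_i^2 - n\Big) \ge 8k\, V_j(K).$$

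Finally, dividing by $k > 0$ gives $\sum_{i=1}^n \lambda_i^2 - n \ge 8 V_j(K)$, which is the assertion. I do not expect a serious obstacle; the only steps needing care are the parity argument that certifies $j$ as an integer in the correct residue class, and the appeal to monotonicity of $\{V_s\}$ to discard the maximum---both of which rely precisely on the restriction to the window $0 \le \sum_{i=1}^n \lambda_i a_i \le k$.
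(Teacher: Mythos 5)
Your treatment of the case $[D]\neq 0$ is correct and is essentially the paper's own argument: pin down $j=\tfrac12(k-\sum\lambda_ia_i)$ as an honest integer in the window $0\le j\le k/2$ (your parity check, using $\lambda_ia_i\equiv a_i\equiv a_i^2 \pmod 2$, is a detail the paper leaves implicit), cancel $(2j-k)^2$ against $(\sum\lambda_ia_i)^2$, collapse the maximum via monotonicity of $\{V_s\}_{s\ge 0}$, and divide by $k>0$. No issue there.

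The genuine gap is that you have assumed away the null-homologous case. The theorem as stated places no restriction on $[D]$, and when $[D]=0$ it is not vacuous: then $k=0$, the hypothesis forces $\sum\lambda_ia_i=0$ and $j=0$, and the conclusion $\sum\lambda_i^2-n\ge 8V_0(K)$, tested with all $\lambda_i=\pm1$, asserts precisely that $V_0(K)=0$ for any knot bounding a null-homologous disk in such a $W$ --- real content that cannot be reached by your route, since Proposition \ref{opti} carries the standing hypothesis $[D]\neq 0$ (indeed for $k=0$ its inequality involves division by $k$, and $S^3_0(K)$ is not a rational homology sphere, so the $d$-invariant input breaks down). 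The paper closes this case with a blow-up trick: attach a $(-1)$-framed $2$-handle along a meridian of $K$ to obtain a disk of class $(0,\cdots,0,1)$ in $W\#\,\CPbone$, which is still negative definite with $b_1=0$; applying the nonzero case to this disk with $\lambda_i=\pm 1$ gives $V_0(K)=0$, whence $\sum\lambda_i^2-n\ge 0=8V_0(K)$ for every admissible choice of odd $\lambda_i$'s. Your proposal needs this (or an equivalent) final step to prove the full statement.
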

\addtocounter{theorem}{-1}
}
\begin{proof}
Suppose $[D]=(a_1,\cdots,a_n) \neq 0$. Consider the inequality \eqref{family} with $[D]=(a_1,\cdots,a_n)$ and spin$^c$ structure $(\lambda_1,\cdots,\lambda_n)$. If $0 \le \sum_{i=1}^n \lambda_i a_i \le k$, then $\sum_{i=1}^n \lambda_i a_i \equiv k-2j \pmod{2k}$ implies that $2j=k-\sum_{i=1}^n \lambda_i a_i$ in $\mathbb{Z}$, and therefore $(2j-k)^2=\left(\sum_{i=1}^n\lambda_i a_i\right)^2$. Moreover, since $V_s$ is non-increasing on $s$, the inequality $2j\le k$ implies that $V_j(K)\ge V_{k-j}(K)$. Thus, the proposition holds for $[D]\neq 0$.

Now, consider the case where $D$ is null-homologous, i.e. all the $a_i$'s are zero. Attach a $(-1)$-framed $2$-handle along the meridian of the knot, and obtain a slice disk of $K$ with homology class $(0,\cdots,0,1)$ in $W \#\, \CPbone$. It follows from above that $V_0(K)= 0$. Since all $\lambda_i$'s are odd, we have that $\sum_{i=1}^n \lambda_i^2-n\ge 0$ and $j=0$, for any choice of $\lambda_i$'s. The result follows.
\end{proof}

As a corollary, we obtain the adjunction inequality for the knot invariant $\nu_+$ for the case where the surface is a disk. 
\begin{corollary}\label{nu+}
 If a knot $K$ bounds a disk $D$ in $W$ such that the homology class of $D$ is $(a_1,...,a_n)$ in $H^2(W)$, we have that
\begin{equation}
2\nu_+(K) \le \sum_{i=1}^na_i^2 - \sum_{i=1}^n|a_i|.
\end{equation}
\end{corollary}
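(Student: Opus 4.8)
The plan is to derive the $\nu_+$ adjunction inequality directly from Theorem \ref{lem:V_s} by choosing the spin$^c$ data $(\lambda_1,\dots,\lambda_n)$ so that the left-hand side $\sum_i \lambda_i^2 - n$ is as small as possible, thereby forcing the relevant $V_j$ to vanish. Recall that $\nu_+(K)$ is characterized as the smallest integer $s\ge 0$ with $V_s(K)=0$; since $\{V_s\}_{s\ge 0}$ is a non-increasing sequence of non-negative integers, this means $V_j(K)=0$ holds precisely when $j\ge\nu_+(K)$, so in particular $V_j(K)=0$ implies $j\ge\nu_+(K)$.

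First I would set $\lambda_i=\mathrm{sign}(a_i)\in\{\pm 1\}$ for every index with $a_i\neq 0$, and $\lambda_i=1$ for every index with $a_i=0$. Each $\lambda_i$ is then odd, as required, and $\lambda_i^2=1$ for all $i$, so that $\sum_i \lambda_i^2 - n = 0$; note that among odd integers this is the optimal (minimizing) choice, since $\lambda_i^2\ge 1$ with equality iff $\lambda_i=\pm 1$. Moreover $\sum_i \lambda_i a_i = \sum_i |a_i|$, and since $|a_i|\le a_i^2$ for every integer $a_i$ we have $0\le \sum_i|a_i|\le \sum_i a_i^2 = k$, so this choice is admissible for Theorem \ref{lem:V_s}.

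Next, with $j=\tfrac12\bigl(k-\sum_i\lambda_i a_i\bigr)=\tfrac12\bigl(\sum_i a_i^2-\sum_i|a_i|\bigr)$, Theorem \ref{lem:V_s} yields $0=\sum_i\lambda_i^2-n\ge 8V_j(K)$, whence $V_j(K)=0$. I would then observe that $j$ is genuinely a non-negative integer: each summand $a_i^2-|a_i|=|a_i|(|a_i|-1)$ is a product of consecutive integers and hence even, so $\sum_i(a_i^2-|a_i|)$ is even. Combining $V_j(K)=0$ with the characterization of $\nu_+$ above gives $\nu_+(K)\le j$, that is, $2\nu_+(K)\le \sum_i a_i^2-\sum_i|a_i|$, as claimed.

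There is essentially no serious obstacle here: all the content sits in Theorem \ref{lem:V_s}, and the corollary is a matter of substituting the optimal $\pm 1$-vector. The only points requiring care are verifying that the chosen $(\lambda_i)$ meets the hypotheses of Theorem \ref{lem:V_s} (oddness together with the constraint $0\le\sum\lambda_i a_i\le k$) and that the resulting index $j$ is a bona fide non-negative integer. The null-homologous case $[D]=0$ is already subsumed, since then $k=j=0$ and the vanishing $V_0(K)=0$ recorded in the proof of Theorem \ref{lem:V_s} forces $\nu_+(K)=0$, matching the trivial right-hand side.
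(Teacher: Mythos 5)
Your proposal is correct and is essentially the paper's own proof: apply Theorem \ref{lem:V_s} with $\lambda_i=\mathrm{sign}(a_i)$ to get $V_j(K)=0$ for $j=\tfrac12\bigl(\sum_i a_i^2-\sum_i|a_i|\bigr)$ and conclude via $\nu_+=\min\{s\mid V_s=0\}$. Your write-up is in fact slightly more careful than the paper's one-line argument, since you patch $\lambda_i=1$ at indices with $a_i=0$ (where $\mathrm{sign}(a_i)=0$ would violate oddness), verify the admissibility constraint $0\le\sum_i\lambda_i a_i\le k$, check that $j$ is a non-negative integer, and handle the null-homologous case explicitly.
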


\begin{proof}
Theorem \ref{lem:V_s} with $\lambda_i= \text{sign}(a_i)$ implies that $V_{j}(K)=0$ for $j=\frac{1}{2}\left(\sum a_i^2-\sum |a_i|\right)$. Recall that $\nu_+=\min\{s\: |\: V_s =0\}$. Thus, we have that $\nu_+(K) \le \frac{1}{2}\left(\sum a_i^2 - \sum|a_i|\right).$
\end{proof}

\begin{comment}
\begin{corollary}\label{cor:stable}
If $0\le \sum_{i=1}^n \lambda_i a_i \le k$, then the inequality \eqref{family} with $[D]=(a_1,\cdots,a_n)$ and spin$^c$ structure $(\lambda_1,\cdots,\lambda_n)$ is inequality \ref{family} with $[D]=(a_1,\cdots,a_n,1)$ and spin$^c$ structure $(\lambda_1,\cdots,\lambda_n,1)$.
\end{corollary}
\end{comment}

Since $\tau(K) \le \nu_+(K)$, this strengthens the obstruction coming from the adjunction inequality of $\tau$ in \cite[Theorem 1.1]{OS03}.
\begin{example}\label{v00}
Let $K$ be the knot $T_{3,7}\# \overline{T_{4,5}}$. By \cite[Proposition 2.4]{Bod} $\nu_+(K)=\nu_+(m(K))=1$. From Corollary \ref{nu+}, we deduce that $K$ cannot be $1$-slice, $2$-slice or $3$-slice. Moreover, since $\nu_+(m(K))=1$, $sd_+(m(K))$ is also nonzero. This gives an example where the slicing degree does not vanish for both the knot and its mirror. 
\end{example}

\begin{proposition}\label{odd}
If $K$ bounds a disk $D$ in $W$ such that $[D]=(a_1,\cdots, a_n)$ with $a_i$ odd. then
$$
k-n \ge 8 V_0(K).
$$
\end{proposition}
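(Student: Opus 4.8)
The plan is to deduce this directly from Theorem \ref{lem:V_s} by making the single most natural choice of spin$^c$ vector, namely the ``diagonal'' one $\lambda_i = a_i$. The whole point of the hypothesis that every $a_i$ is odd is precisely to make this choice admissible.

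First I would check the two conditions required to invoke Theorem \ref{lem:V_s}. Since each $a_i$ is odd, each $\lambda_i = a_i$ is odd, so the vector $(\lambda_1,\dots,\lambda_n)$ lies in the odd-integer lattice as required. Next, recalling that $k = \sum_{i=1}^n a_i^2$, with this choice we have $\sum_{i=1}^n \lambda_i a_i = \sum_{i=1}^n a_i^2 = k$, which satisfies the constraint $0 \le \sum_{i=1}^n \lambda_i a_i \le k$ (it sits exactly at the upper endpoint). I should also note that since the $a_i$ are odd they are in particular nonzero, so $[D] \neq 0$ and we are in the genuine, non-null-homologous case to which Theorem \ref{lem:V_s} applies.

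With admissibility in hand, the computation is immediate. The index becomes $j = \tfrac{1}{2}\left(k - \sum_{i=1}^n \lambda_i a_i\right) = \tfrac{1}{2}(k - k) = 0$, so $V_j(K) = V_0(K)$. The left-hand side of the inequality in Theorem \ref{lem:V_s} is $\sum_{i=1}^n \lambda_i^2 - n = \sum_{i=1}^n a_i^2 - n = k - n$. Substituting yields exactly $k - n \ge 8 V_0(K)$, as claimed.

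There is essentially no hard step here; the content lies entirely in recognizing that oddness of the $a_i$ permits the diagonal choice $\lambda_i = a_i$. The only point demanding a moment's care is that $\sum_{i=1}^n \lambda_i a_i = k$ realizes the boundary case of the allowed range $[0,k]$, so one should confirm that Theorem \ref{lem:V_s} is stated with a closed inequality at that endpoint (it is), ensuring $j = 0$ in $\mathbb{Z}$ rather than wrapping around modulo $k$.
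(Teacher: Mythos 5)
Your proposal is correct and is exactly the paper's proof: the paper's argument reads, in full, ``Choose $\lambda_i$ to be $a_i$. Then, apply Theorem \ref{lem:V_s}.'' Your verification of the admissibility conditions (oddness of each $\lambda_i = a_i$ and $\sum_{i=1}^n \lambda_i a_i = k$ lying at the closed endpoint of $[0,k]$, giving $j=0$) simply makes explicit the checks the paper leaves to the reader.
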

\begin{proof}
Choose $\lambda_i$ to be $a_i$. Then, apply Theorem \ref{lem:V_s}.
\end{proof}

We compute the invariants $\{V_s\}_{s\ge 0}$ of Floer thin knots and $L$-space knots, whose full knot Floer complexes can be read from their Alexander-Conway polynomials. Recall that a Floer thin knot is one whose knot Floer homology is supported in a single diagonal. We will see that the values $\{V_s\}_{s\ge 0}$ of a Floer thin knot are determined by its $\tau$-invariant. The case for alternating knots was considered in \cite[Example 7.1]{allen}.

\begin{proposition}\label{vs:thin}
Let $K$ be a Floer thin knot. If $\tau(K)$ is positive, then
$$V_s(K) = \max\left\{\left\lfloor\frac{\tau(K)+1-s}{2}\right\rfloor, 0\right\}. $$
Otherwise, $V_s(K)=0$ for all $s\ge 0$.
\end{proposition}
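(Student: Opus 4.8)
The plan is to reduce everything to a single \emph{staircase} summand of the full knot Floer complex and then evaluate $V_s$ there. The key input is the structure theorem for Floer thin knots (due to Petkova): since the knot Floer homology of $K$ is supported on one diagonal, the full complex $CFK^\infty(K)$ is filtered chain homotopy equivalent to a direct sum
\begin{equation*}
CFK^\infty(K)\;\simeq\; S(\tau(K))\;\oplus\;(\text{boxes}),
\end{equation*}
where $S(\tau)$ is the length-$2|\tau|$ staircase complex determined by $\tau(K)$, all of whose steps have length one, and every remaining summand is a $2\times 2$ square (a ``box''). First I would record that $V_s$ sees only the staircase summand. The splitting of $CFK^\infty(K)$ induces splittings $A_s^+=A_s^+(S(\tau))\oplus A_s^+(\text{boxes})$ and $B^+=B^+(S(\tau))\oplus B^+(\text{boxes})$ compatible with $v_s^+$. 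Since $H_*(B^+)=\mathcal{T}^+$ and each box is acyclic over $\mathbb{F}[U,U^{-1}]$, the boxes contribute no summand to $H_*(B^+)$ and only $U$-torsion to $H_*(A_s^+)$. Because $v_s^+$ respects the direct-sum decomposition, the $U^{V_s}$-factor recording $\mathfrak v_s^+$ on the tower is governed entirely by $S(\tau)$; equivalently, $V_s$ is an invariant of the local equivalence class and boxes are locally trivial. Either way $V_s(K)=V_s(S(\tau(K)))$.

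For $\tau(K)>0$ the ascending staircase $S(\tau)$ is precisely the complex $CFK^\infty$ of the $L$-space knot $T_{2,2\tau+1}$, so it remains to compute $V_s$ of that knot. Here I would invoke the identification of $V_s$ with the torsion coefficient $t_s=\sum_{j\ge 1} j\,a_{s+j}$ for $L$-space knots, applied to $\Delta_{T_{2,2\tau+1}}(t)=\sum_{m=-\tau}^{\tau}(-1)^{\tau-m}t^m$, so that $a_m=(-1)^{\tau-m}$ for $|m|\le\tau$ and $a_m=0$ otherwise. A direct evaluation gives, for $0\le s\le\tau$,
\begin{equation*}
t_s\;=\;\sum_{j=1}^{\tau-s} j\,(-1)^{\tau-s-j}\;=\;\Big\lceil\tfrac{\tau-s}{2}\Big\rceil\;=\;\Big\lfloor\tfrac{\tau+1-s}{2}\Big\rfloor,
\end{equation*}
and $t_s=0$ for $s\ge\tau$, which together are exactly $\max\{\lfloor(\tau+1-s)/2\rfloor,0\}$. (Alternatively one can bypass the torsion-coefficient formula and read the same answer off $H_*(A_s^+(S(\tau)))$ directly, tracking the shift of the tower under $\mathfrak v_s^+$.)

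For $\tau(K)\le 0$ the staircase degenerates. When $\tau=0$, $S(0)$ is a single generator and each $\mathfrak v_s^+$ is an isomorphism of towers, so $V_s=0$. When $\tau<0$, $S(\tau)$ is the mirror (descending) staircase of $S(-\tau)$, for which each $\mathfrak v_s^+$ is already surjective onto the tower with no $U$-factor, again giving $V_s=0$ for all $s\ge 0$. This is consistent with $\nu_+(K)=\max\{\tau(K),0\}=0$ in this range, which one reads off the same staircase model.

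I expect the main obstacle to be the box-vanishing step: one must argue cleanly that the square summands never perturb the tower $\mathcal{T}^+$ or its $U^{V_s}$-factor, i.e.\ that $V_s$ detects only $S(\tau)$. Once this local-triviality statement is secured, identifying $S(\tau)$ with $T_{2,2\tau+1}$ and evaluating $t_s$ are entirely routine.
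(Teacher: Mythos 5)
Your proposal is correct, and its first half coincides with the paper's: the paper likewise invokes the structure theorem for thin complexes (a single staircase plus square summands, cited there to Petkova) and discards the boxes because their homology dies in sufficiently high gradings --- the ``box-vanishing'' step you worry about is exactly this standard observation, since each box is acyclic over $\mathbb{F}[U]$ and hence contributes only bounded-grading ($U$-torsion) classes to $H_*(A_s^+)$ and $H_*(B^+)$, leaving the towers and the $U^{V_s}$-factor untouched; your alternative framing via local equivalence is equally valid, as $V_s$ is a local-equivalence invariant and boxes are locally trivial. Where you genuinely diverge is the evaluation of $V_s$ on the staircase: the paper argues directly on the complex, using the fact that the staircase has nontrivial homology in $A_s^+$ (resp.\ $B^+$) precisely when it is entirely contained there, and then reads off the $i$-gradings of the tower generators ($\tau(K)$ for $B^+$ and $\min\{\lfloor(\tau(K)+s)/2\rfloor,\tau(K)\}$ for $A_s^+$), whereas you identify the step-length-one staircase $S(\tau)$ with $CFK^\infty(T_{2,2\tau+1})$ and invoke the identity $V_s=t_s$ (torsion coefficients) for $L$-space knots, reducing the claim to the elementary alternating sum $\sum_{j=1}^{\tau-s}j(-1)^{\tau-s-j}=\lceil(\tau-s)/2\rceil=\lfloor(\tau+1-s)/2\rfloor$, which checks out. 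Your route buys a clean closed-form computation with no staircase-geometry bookkeeping, at the cost of importing the $L$-space-knot torsion-coefficient formula (make sure to cite it precisely, e.g.\ Ozsv\'ath--Szab\'o's rational surgeries formula or Borodzik--N\'emethi); the paper's route is more self-contained and its mirror-staircase observation handles the $\tau(K)<0$ case exactly as you do (the paper deduces $V_0=0$ from the proof of the corresponding statement for mirrors of $L$-space knots and concludes by monotonicity of $\{V_s\}_{s\ge 0}$, which is marginally tidier than your surjectivity argument but equivalent).
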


\begin{proof}
Since $K$ is a thin knot, the complex $C=CFK^{\infty}(K)$ is homotopy equivalent to a direct sum of a single stair case and some number of square complexes \cite[Lemma 7]{Ina} (See \cite[Figure 14]{HM}). The homology of the square complexes vanish when the gradings are sufficiently large, so it suffices to consider the staircase for computing the $V_s$'s. 

If $\tau(K)$ is non-negative, then by \cite[Lemma 1]{zhang23d}, the homology of the staircases restricted to $A_s^+$ (resp. $B^+$) is nontrivial if and only if the staircase is fully contained in $A_s^+$ (resp. $B^+$). Therefore, the tower in $H_*(B^+)$ is generated by the staircase with $i$-grading $\tau(K)$. By observing the staircases in \cite[Figure 14]{HM}, we have that the tower of $A_s^+$ is generated by the staircase with $i$-grading $\min\left\{\left\lfloor\frac{\tau(K)+s}{2}\right\rfloor, \tau(K)\right\}$. Hence, $V_s(K) = \max\left\{\left\lfloor\frac{\tau(K)+1-s}{2}\right\rfloor, 0\right\} $.
If $\tau(K)$ is negative, then the staircase within $C$ is of the same shape as a staircase in the mirror of some $L$-space knot. Thus, from the proof of \cite[Proposition 7.3]{HM}, we have that $V_0(K)=0$.
\end{proof}

An $L$-space knot is one which admits an $L$-space surgery. In \cite{OS05}, Ozsv{\'{a}}th and Szab{\'{o}} showed that the Alexander polynomial of an $L$-space knot $K$ is of the form 
$$\Delta_K(t)=(-1)^m + \sum_{i=1}^m (-1)^{m-i}(t^{n_i}+t^{-n_i})$$
for a sequence of positive integers $0<n_1<n_2< \cdots <n_m$. They also proved that the full knot Floer complex of an $L$-space knot $K$ is generated by a staircase, where the lengths of the stairs are determined by the $n_i$'s (See \cite[Figure 12]{HM}). Hence, we can compute the invariants $\{V_s\}_{s\ge 0}$ of $K$.
\begin{proposition}
Let $K$ be an $L$-space knot, such that $\tau(K)$ is positive. Suppose $\Delta_K(t)=(-1)^m + \sum_{i=1}^m (-1)^{m-i}(t^{n_i}+t^{-n_i})$. For $1\le k\le m$, let $l_k$ be $n_k-n_{k-1}$. Let $n(K)$ be the alternating sum
$n_m-n_{m-1}+ \cdots + (-1)^{m-2}n_2+(-1)^{m-1}n_1$. Moreover, let $l_0=0$ and let $l_{m+1}= + \infty$. 
If $m$ is odd, then
$$V_s(K) = n(K)-\max_{1 \leq i \leq N}\left\{\min\left\{\sum_{k=0}^{i-1} l_{2k+1}, s- \sum_{k=0}^{i-1} l_{2k}\right\}\right\},$$
for $s\in\left[\sum_{k=0}^{N-1} l_{2k},\sum_{k=0}^{N} l_{2k}\right)$ with $1 \le N \le \lceil m/2 \rceil$.
If $m$ is even, then
$$V_s(K) = n(K)-\max_{0 \leq i \leq N}\left\{\min\left\{\sum_{k=0}^{i} l_{2k}, s- \sum_{k=0}^{i-1} l_{2k+1}\right\}\right\},$$
for $s\in\left[\sum_{k=0}^{N-1} l_{2k+1},\sum_{k=0}^{N} l_{2k+1}\right)$ with $0 \le N \le m/2$.
\end{proposition}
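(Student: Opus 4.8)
The plan is to read $V_s(K)$ directly off the staircase model of $CFK^\infty(K)$, exactly as in the thin case treated in Proposition \ref{vs:thin}. By \cite{OS05} (see \cite[Figure 12]{HM}), the full complex $C=CFK^\infty(K)$ is filtered chain homotopy equivalent to a single staircase $\mathcal{S}$ whose consecutive steps have lengths $l_1,\dots,l_m$, alternating between horizontal and vertical and arranged symmetrically about the diagonal $i=j$. Writing the staircase generators as lattice points, the horizontal displacement and the vertical displacement accumulated up to a given corner are partial sums of the odd-indexed and even-indexed $l_k$ respectively, which is the source of the two partial-sum expressions $\sum l_{2k+1}$ and $\sum l_{2k}$ in the statement. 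First I would fix this labelling once and for all, recording each corner's $(i,j)$-coordinates as such partial sums and noting that the topmost generator sits at $i$-grading $\tau(K)=g=n_m$. (As a cross-check, the resulting values should agree with the torsion coefficients $t_s(K)=\sum_{i:\,n_i>s}(n_i-s)(-1)^{m-i}$, whose $s=0$ value is indeed $n(K)$.)

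Next I would compute the two relevant tower generators. As in Proposition \ref{vs:thin}, the bottom of the tower in $H_*(B^+)$, $B^+=C\{i\ge 0\}$, is represented by the top corner, at $i$-grading $\tau(K)=n_m$, while the bottom of the tower in $H_*(A_s^+)$, $A_s^+=C\{\max\{i,j-s\}\ge 0\}$, is represented by whichever staircase corner penetrates deepest into the bent region $\{\max\{i,j-s\}\ge 0\}$. Since this region is a union of the two half-planes $\{i\ge 0\}$ and $\{j\ge s\}$, the relevant depth of a corner is governed by a \emph{minimum} of two linear functions of its coordinates (one per half-plane), and the generating corner is the one \emph{maximizing} this minimum; this is precisely the $\max_i\min\{\cdots\}$ appearing in the statement. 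Finally, since $v_s^+$ is modeled on multiplication by $U^{V_s}$, one has $V_s=\tau(K)-(\text{$i$-grading of the }A_s^+\text{ tower generator})$. Expressing that generator's $i$-grading through the partial sums, where it equals $n_m-n(K)+\max_i\min\{\cdots\}$, the last term measuring how much shallower the deepest corner becomes as $s$ grows, and using $\tau(K)=n_m$, yields $V_s=n(K)-\max_i\min\{\cdots\}$. The split into $m$ odd versus $m$ even arises because the staircase terminates on a vertical step when $m$ is odd and on a horizontal step when $m$ is even, which changes whether the final accumulated coordinate is an even- or odd-indexed partial sum and hence shifts all the index ranges.

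The bookkeeping is where the real work lies, and I expect it to be the main obstacle. Concretely I would: (i) justify the conventions $l_0=0$ and $l_{m+1}=+\infty$, which respectively pad the first corner onto the axis and make the last $s$-interval unbounded; (ii) determine, for each half-open interval of $s$, exactly which corner wins the $\max_i\min$, thereby fixing the range $1\le N\le\lceil m/2\rceil$ (resp. $0\le N\le m/2$) and the inner index range for $i$; and (iii) verify the sanity constraints that pin the formula down, namely $V_0=n(K)$ and $V_s=0$ for $s\ge g$, together with the monotonicity $V_s\ge V_{s+1}$ forced by the staircase. A small example such as $T_{2,5}$, where $m=2$, $(l_1,l_2)=(1,1)$ and $n(K)=1$, gives a quick consistency check ($V_0=V_1=1$ and $V_s=0$ for $s\ge 2$) and would guide the correct parity-dependent off-by-one in the index ranges and confirm the absence of floor functions, in contrast with the thin case of Proposition \ref{vs:thin}; this indexing is the fiddly heart of the argument.
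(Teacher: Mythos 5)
Your proposal is correct and takes essentially the same route as the paper: both compute $V_s$ from the staircase model of $CFK^{\infty}$ of an $L$-space knot by determining the bottoms of the towers in $H_*(A_s^+)$ and $H_*(B^+)$ via full containment of the shifted staircase (the paper counts the least number of $U^{-1}$-shifts, which is exactly your deepest-corner $\max_i\min\{\cdots\}$ dual description), getting $n(K)$ for $B^+$ and the max-min partial-sum expression for $A_s^+$, with the identical parity split and $s$-interval bookkeeping. The bookkeeping you defer as ``the fiddly heart'' is precisely what the paper's proof carries out, namely identifying which generators lie below the line $\{j=s\}$ on each interval and taking the difference of the two shift counts.
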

\begin{proof}
The staircases shift by $(1,1)$ when multiplied by $U^{-1}$. Since the homology of the staircase vanishes unless it is fully contained in $A_s^+$ (resp. $B^+$) by \cite[Lemma 1]{zhang23d}, we want to find the least number of multiplications by $U^{-1}$ such that the whole staircase is included in $A_s^+$(resp. $B^+$). It is easy to see that the least number of shifting for $B^+$ is $n(K)$, which is the width of the staircase. 

The computation for $A_s^+$ depends on the parity of $m$. If $m$ is odd, for $s\in \left[\sum_{k=0}^{N-1} l_{2k},\sum_{k=0}^{N} l_{2k}\right)$ with $0 \le N \le \lceil m/2\rceil$, the generators $x_1^1, \cdots, x_{2N-1}^1$ are under the line $\{j=s\}$. The number of moves to shift $x_{2i-1}$ into the complex $A_s^+$ is $\min\{ \sum_{k=0}^{i-1} l_{2k+1}, s-\sum_{k=0}^{i-1}l_{2k}\}$. Thus, the least number of shifting needed is $$\max_{1\le i\le N}\left\{ \min\left\{ \sum_{k=0}^{i-1} l_{2k+1}, s-\sum_{k=0}^{i-1}l_{2k}\right\} \right\}.$$

Similarly, if $m$ is even, then for $s\in \left[\sum_{k=0}^{N-1} l_{2k+1},\sum_{k=0}^{N} l_{2k+1}\right)$ with $0 \le N \le m/2$, the generators $x_2^1, \cdots, x_{2N}^1$ are under the line $\{j=s\}$. The number of moves to shift $x_{2i}$ into the complex $A_s^+$ is $\min\{ \sum_{k=1}^i l_{2k}, s-\sum_{k=0}^{i-1}l_{2k+1}\}$. Thus, the least number of shifting needed is $$\max_{0\le i\le N}\left\{ \min\left\{ \sum_{k=1}^i l_{2k}, s-\sum_{k=0}^{i-1}l_{2k+1}\right\} \right\}.$$ 

To obtain $\{V_s\}_{s \ge 0}$, take the difference between the moves required for $A_s^+$ and $B^+$.
\end{proof}

\begin{proposition}
Let $K$ be the mirror of an $L$-space knot. Then $V_s(K)=0$ for all $s$.
\end{proposition}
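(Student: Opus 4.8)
The plan is to reduce the claim about the mirror of an $L$-space knot to the computation of $V_0$, exploiting the fact that the $V_s$ invariants form a non-increasing sequence of non-negative integers. Since $V_s \ge 0$ always and the sequence is non-increasing in $s$, it suffices to show that $V_0(K) = 0$; then $0 \le V_s(K) \le V_0(K) = 0$ forces $V_s(K) = 0$ for all $s \ge 0$. So the entire problem collapses to establishing a single equality, namely $V_0\bigl(m(J)\bigr) = 0$ where $J$ is an $L$-space knot and $K = m(J)$ is its mirror.

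To compute $V_0$ of the mirror, I would work directly with the staircase model for the full knot Floer complex. By the structure theorem of Ozsv\'ath--Szab\'o cited in the excerpt, $CFK^\infty(J)$ is generated by a staircase, and the full complex of the mirror is obtained by dualizing, i.e. reflecting the staircase across the diagonal $i = j$ (equivalently, negating the bigrading). The key geometric observation is that the reflected staircase associated to an $L$-space knot has the shape of a staircase descending from the axis in such a way that it is already fully contained in $B^+ = C\{i \ge 0\}$ and, more to the point, in $A_0^+ = C\{\max\{i,j\} \ge 0\}$ without requiring any multiplication by $U^{-1}$. Concretely, I would invoke \cite[Lemma 1]{zhang23d} exactly as in the preceding propositions: the homology of the (reflected) staircase restricted to $A_0^+$ is nontrivial if and only if the staircase is fully contained in $A_0^+$, and for the mirror of an $L$-space knot this containment holds with zero shifts, so the generator of the tower in $H_*(A_0^+)$ already maps to the generator of the tower in $H_*(B^+)$ under $\mathfrak{v}_0^+$. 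This means $\mathfrak{v}_0^+$ is an isomorphism onto the tower, hence modeled on multiplication by $U^0$, giving $V_0(K) = 0$.

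An alternative and perhaps cleaner route is to appeal to the proof of \cite[Proposition 7.3]{HM}, which is already cited in the proof of Proposition \ref{vs:thin} for precisely the case of a negative $\tau$ staircase: there it is shown that a staircase of the shape arising from the mirror of an $L$-space knot has $V_0 = 0$. Indeed, the mirror of an $L$-space knot has $\tau(K) = -\tau(J) \le 0$, and the situation is formally identical to the negative-$\tau$ branch of the thin-knot argument. I would therefore state that the staircase of $m(J)$ is of the same shape as treated in \cite[Proposition 7.3]{HM} and conclude $V_0(K) = 0$ directly.

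The main obstacle, such as it is, lies in the bookkeeping of the reflection: one must check carefully that dualizing the $L$-space staircase produces a staircase whose steps run in the direction that places the whole complex inside $A_0^+$ at the zeroth filtration level, rather than requiring positive shifts. This is a matter of tracking the orientation of the stairs under the $(i,j) \mapsto (-j,-i)$ symmetry and confirming that the highest-graded generator of the tower sits at or above the axes. Once the shape is pinned down, the conclusion is immediate from the containment criterion of \cite[Lemma 1]{zhang23d}, and no genuine computation of step lengths is needed. I expect this to be the shortest proof among the $V_s$ propositions, essentially a one-line corollary of the monotonicity of $V_s$ together with the $V_0 = 0$ computation.
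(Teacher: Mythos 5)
Your proposal is correct and, in its second (``alternative'') route, is exactly the paper's proof: cite \cite[Proposition 7.3]{HM} for $V_0(K)=0$ and conclude $V_s(K)=0$ for all $s$ by the monotonicity and non-negativity of $\{V_s\}_{s\ge 0}$. The additional hands-on staircase-reflection discussion in your first paragraphs is a valid but unnecessary elaboration of what that citation already provides.
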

\begin{proof}
By \cite[Proposition 7.3]{HM}, $V_0(K)=0$. Since $\{V_s\}_{s\ge 0}$ is non-increasing, we have that $V_s(K)=0$ for all $s$.
\end{proof}

{
\renewcommand{\thetheorem}{\ref{thm:t22m+1}}
\begin{theorem}
If $K$ is a Floer thin knot, then $sd_+(K)$ is greater or equal to $4\tau(K)$. In particular, the equality is achieved for $T_{2, 2m+1}$ with $sd_+(T_{2, 2m+1}) = 4m$.
\end{theorem}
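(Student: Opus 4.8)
The plan is to treat the two assertions separately: the general lower bound $sd_+(K)\ge 4\tau(K)$ for Floer thin $K$, and then the exact value $sd_+(T_{2,2m+1})=4m$. For the lower bound I would first dispose of the case $\tau(K)\le 0$, where $4\tau(K)\le 0\le sd_+(K)$ is automatic, and then set $\tau:=\tau(K)>0$. The strategy is to argue by contradiction: assume $K$ bounds a disk $D$ in some $\CPbar{n}\punc$ with $[D]=(a_1,\dots,a_n)$ and $k=\sum_i a_i^2\le 4\tau-1$, and produce an odd tuple $(\lambda_1,\dots,\lambda_n)$ that violates the inequality of Theorem \ref{lem:V_s}. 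The only input about $K$ I need is Proposition \ref{vs:thin}, which for $\tau>0$ gives $V_j(K)=\max\{\lfloor(\tau+1-j)/2\rfloor,0\}$; a one-line case check on the parity of $\tau-j$ then yields the clean estimate $8V_j(K)\ge 4(\tau-j)$ for all $j\ge 0$.

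The heart of the argument is the choice of the $\lambda_i$. I would take $\lambda_i=a_i$ when $a_i$ is odd, and $\lambda_i=a_i-\operatorname{sgn}(a_i)$ (the nearest odd integer, rounded toward $0$, with the convention $\lambda_i=1$ when $a_i=0$) when $a_i$ is even. Writing $E=\#\{i:a_i\text{ even}\}$, this makes $\sum_i(\lambda_i-a_i)^2=E\le n$, while $L:=\sum_i\lambda_i a_i=k-\sum_{a_i\text{ even}}|a_i|$ satisfies $0\le L\le k$ (each even entry contributes $|a_i|(|a_i|-1)\ge 0$), so the hypotheses of Theorem \ref{lem:V_s} hold and $j=\tfrac12(k-L)=\tfrac12\sum_{a_i\text{ even}}|a_i|$ is a nonnegative integer. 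Expanding $\sum_i\lambda_i^2=\sum_i(\lambda_i-a_i)^2+2L-k=E+2L-k$, the assertion I want to contradict reads $E+2L-k-n\ge 8V_j$. Feeding in $8V_j\ge 4(\tau-j)=4\tau-2k+2L$, a violation follows the moment $E<4\tau-k+n$; and since $E\le n$ while $k\le 4\tau-1$, indeed $4\tau-k+n\ge n+1>E$. This contradicts Theorem \ref{lem:V_s}, forcing $k\ge 4\tau$ and hence $sd_+(K)\ge 4\tau$. (The degenerate case $k=0$ is included: then all $a_i=0$ and Theorem \ref{lem:V_s} forces $V_0(K)=0$, impossible for $\tau>0$.)

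For the torus knots $T_{2,2m+1}$, recall they are alternating, hence Floer thin, with $\tau=m$, so the bound just established gives $sd_+(T_{2,2m+1})\ge 4m$. For the matching upper bound I would exhibit an explicit disk: $T_{2,2m+1}$ is obtained from the unknot by $m$ positive full twists on coherently oriented pairs of strands (equivalently $u_s(T_{2,2m+1})=m$, all crossing changes of positive type), each twisting disk meeting the knot algebraically twice. By the construction in Lemma \ref{lem:sn} each such twist contributes a class-$(2)$ summand, so $K$ bounds an embedded disk of class $(\underbrace{2,\dots,2}_{m})$ in $\CPbar{m}\punc$ with self-intersection $-4m$. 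Thus $sd_+(T_{2,2m+1})\le 4m$, and equality follows.

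The main obstacle is the lower bound, and inside it the simultaneous control of the free parameters of Theorem \ref{lem:V_s}. The subtlety is that rounding each $a_i$ to the nearest odd $\lambda_i$ must at once (i) keep $\sum_i(\lambda_i-a_i)^2$ bounded by the number of entries $n$, (ii) keep $L=\sum_i\lambda_i a_i$ inside $[0,k]$ so that the hypothesis of Theorem \ref{lem:V_s} applies, and (iii) leave the index $j$ small enough for the thin-knot estimate $8V_j\ge 4(\tau-j)$ to overpower $\sum_i\lambda_i^2-n$. Rounding \emph{toward zero} is precisely what reconciles (i)--(iii) at the same time, and verifying these three conditions jointly — rather than any single inequality — is where the real content lies; the remainder is bookkeeping, namely confirming $j\in\mathbb{Z}_{\ge 0}$ and handling the entries $a_i=0$.
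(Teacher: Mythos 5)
Your proposal is correct and follows essentially the same route as the paper: the lower bound applies Theorem \ref{lem:V_s} with each $\lambda_i$ the nearest odd integer to $a_i$ rounded toward zero (the paper's choice $\lambda_i=a_i-1$ on even entries after arranging $a_i\ge 0$ is the same up to sign conventions at $a_i=0$), combined with the thin-knot formula of Proposition \ref{vs:thin} to contradict $k<4\tau(K)$. Your upper bound for $T_{2,2m+1}$ via $m$ positive full twists producing a class-$(2,\dots,2)$ disk in $\CPbar{m}$ is exactly the construction underlying Lemma \ref{lem:sn}, which the paper invokes through the unknotting number.
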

\addtocounter{theorem}{-1}
}
\begin{proof} Let $K$ be a thin knot with $\tau(K)\ge 0$. Suppose $K$ bounds a disk $D$ in {\small $\CPbar{n}\punc$}. Upon permuting the coordinates and changing signs, we can assume that the homology class of $D$ is $[a_1,\cdots, a_m, a_{m+1} \cdots, a_{n}],$ such that $a_i$'s are non-negative, $\{a_1,\cdots,a_m\}$ are even and $\{a_{m+1} \cdots, a_{n}\}$ are odd. 

Apply Theorem \ref{lem:V_s} to the case where $(\lambda_1,\cdots, \lambda_n) = (a_1 -1, \cdots, a_m -1, a_{m+1},\cdots,a_{n})$, and compute that
$$\sum_{i=1}^m (a_i-1)^2 + \sum_{i=m+1}^n a_i^2 - n \ge 8V_j(K),$$
where $j = \frac{\sum_{i=1}^m a_i}{2}$.
By Proposition \ref{vs:thin}, we have that $V_j(K)=\max\left\{\left\lfloor\frac{\tau(K)+1-j}{2}\right\rfloor, 0\right\}$. Hence, if such disk $D$ exists, then we have that
$$k - 2\sum_{i=1}^m a_i - (n-m) \ge 8\max\left\{\left\lfloor\frac{\tau(K)+1-\frac{\sum_{i=1}^m a_i}{2}}{2}\right\rfloor, 0\right\}.$$
However, if $k < 4\tau(K)$, then the LHS would be strictly less than $$4\tau - 2\sum_{i=1}^m a_i = 8\left(\frac{\tau(K)-\frac{\sum_{i=1}^m a_i}{2}}{2}\right).$$ Contradiction. Thus, the slicing degree of a thin knot $K$ is bounded below by $4\tau(K)$.

Since the torus knot $T_{2, 2m+1}$ is alternating and its $\tau$-invariant is $m$, we have that $$sd_+(T_{2, 2m+1})\ge 4m.$$ By Lemma \ref{lem:sn}, since the unknotting number of $T_{2, 2m+1}$ is $m$, we conclude that $sd_+(T_{2, 2m+1})=4m$.
\end{proof}
%%%%%%%%%%%%%%%%%%%%%%%%%%%%%%%%%%%%%%%%%%%%%%%%%%%%%%%%%%%%%%%%%%%%%%%%%%%%%%%%%%%%%%%%%%%%%%%%%%%%%%%%%%%%%%%%%%%%%%%%%%%%
\subsection{Singular instanton invariants $\Gamma_K(s)$}
Daemi and Scaduto in \cite{daemi19} constructed the concordance knot invariant $\Gamma_K(s), s\in \mathbb{Z}$ from the Chern-Simons filtration on the equivariant singular instanton complex of a knot $K$. By \cite[Proposition 4.33]{daemi20}, we have that the knot invariant $\Gamma_K$ obstructs $K$ from bounding certain immersed surfaces in some negative-definite $4$-manifold. For instance, \cite[Corollary 4.43]{daemi20} proves that the knot $7_4$ does not bound a disk of homology class $(2)$ in $\CPbone\punc$. 

Let $W$ be a negative-definite $4$-manifold with $b_1(W) =0$ and $b_2(W) = n$. Suppose $K$ bounds a disk $D$ in $W^{\circ} = W \punc$, such that the homology class of $D$ is $(a_1,\cdots,a_n)$ in $H^2(W;\mathbb{Z})$. Then, after removing a small neighborhood of a point on the disk, we obtain a negative-definite cobordism pair $(W, D)$ from $(S^3, U)$ to $(S^3, K)$. Let $c=(c_1,\cdots, c_n)$ be a cohomology class in $H^2(W; \mathbb{Z})$. The minimal topological energy $\kappa_{min}$ among all reducible instantons is defined as
$$\kappa_{min}(W, D, c)=\min\left\{\sum_{i=1}^n \left.\left (z_i + \frac{a_i}{4} - \frac{c_i}{2}\right )^2\right\vert (z_1, \cdots, z_n) \in \mathbb{Z}^n\right\}.$$ Define $\Phi_{min}(W, D, c)$ to be the set of minimal reducibles (i.e. reducibles that minimize the topological energy):
$$\Phi_{min}(W, D, c)=\left\{(z_1, \cdots, z_n) \in \mathbb{Z}^n\left\vert \: \kappa_{min}(W, D, c) = \sum_{i=1}^n \left (z_i + \frac{a_i}{4} - \frac{c_i}{2}\right )^2 \right.\right\}.$$  
To keep track of the monopole number of  minimal reducibles, we define a signed count $\eta(W, D, c)$ of minimal reducibles to be $$\eta(W, D, c)=\sum_{z\in \Phi_{min}(W, D, c)} (-1)^{\mu(z)}T^{\nu(z)}\in \mathbb{Z}[T^{\pm}],$$
where $\mu(z) = - \sum z_i^2$ is the self-intersection number of $z$, and $\nu(z)=\sum a_i(c_i-2z_i)$ is the monopole number of $z$. Conventionally, we omit $c$ when $c=0$.
\begin{proposition}[Proposition 4.33 in \cite{daemi20}]\label{prop: daemi20}
Suppose $K$ bounds a disk $D$ in $W$ such that $[D]=(a_1, \cdots, a_n)\in H^2(W;\mathbb{Z})$. Let $c=(c_1,\cdots, c_n)\in H^2(W; \mathbb{Z})$. Let $$i=4\kappa_{min}(W, D, c) - \frac{1}{4} \sum_{i=1}^n a_i^2 - \frac{1}{2}\sigma(K).$$
If $\eta(W, D, c)\neq 0 \in \mathbb{Z}[T^{\pm}]$ and $i\ge 0$, then
$$\Gamma_K (i) \le 2\kappa_{min}(W, D, c).$$
\end{proposition}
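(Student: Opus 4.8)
The plan is to realize the inequality as a consequence of the functoriality of the Chern--Simons--filtered singular instanton homology under negative-definite cobordisms of pairs, following the framework of \cite{daemi19, daemi20}. First I would turn the disk into a cobordism: removing a small open ball centered at an interior point of $D$ produces a pair $(W, D)$ that is a cobordism from $(S^3, U)$, where $U$ is the small linking unknot of $D$, to $(S^3, K)$ with $K = \partial D$. Since $W$ is negative-definite with $b_1(W) = 0$, this is a negative-definite cobordism of pairs, and it induces a map on the equivariant singular instanton homology that defines $\Gamma_K$. The crucial structural input is that this cobordism map respects the Chern--Simons filtration, shifting filtration levels by the topological energy of the contributing instantons and shifting the $\mathbb{Z}$-grading by an amount computed from the Atiyah--Singer index of the singular ASD operator on $(W, D)$.

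Next I would isolate the contribution of reducible connections. Because $W$ is negative-definite with $b_1 = 0$, a Donaldson-type argument shows that there are no irreducible ASD connections below the minimal reducible energy, so the lowest-filtration part of the cobordism map is governed entirely by reducibles. The reducibles on $(W, D)$ with fixed auxiliary class $c$ are parameterized by the integer lattice: the point $z = (z_1, \ldots, z_n) \in \mathbb{Z}^n$ carries topological energy proportional to $\sum_i (z_i + \tfrac{a_i}{4} - \tfrac{c_i}{2})^2$, so the minimal-energy reducibles are precisely the elements of $\Phi_{min}(W, D, c)$ and the minimal energy is $\kappa_{min}(W, D, c)$. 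This is the step where negative-definiteness is indispensable, and it pins the minimal filtration level of the map at $2\kappa_{min}(W, D, c)$.

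I would then identify the leading-order term of the cobordism map with the signed count $\eta(W, D, c)$. Each minimal reducible $z \in \Phi_{min}(W, D, c)$ contributes to the lowest-filtration component of the map with a sign determined by the parity of its index, recorded by $(-1)^{\mu(z)}$ with $\mu(z) = -\sum z_i^2$, and with a weight $T^{\nu(z)}$ in $\mathbb{Z}[T^{\pm}]$ recording its monopole number $\nu(z) = \sum a_i(c_i - 2z_i)$ from the holonomy grading of the equivariant theory. Summing over $\Phi_{min}(W, D, c)$ gives $\eta(W, D, c)$ as the coefficient of the bottom filtration level; hence the hypothesis $\eta(W, D, c) \neq 0$ guarantees that the cobordism map is nonzero at filtration level $2\kappa_{min}(W, D, c)$. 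Finally, the index computation for the singular ASD operator evaluates the $\mathbb{Z}$-grading shift to be $i = 4\kappa_{min}(W, D, c) - \tfrac14\sum a_i^2 - \tfrac12\sigma(K)$, where the $-\tfrac14\sum a_i^2$ term is the self-intersection correction (recall $[D]\cdot[D] = -\sum a_i^2$) and $-\tfrac12\sigma(K)$ is the reducible grading associated to $(S^3, K)$. The assumption $i \ge 0$ places us in the range where $\Gamma_K$ is defined, and unwinding the definition of $\Gamma_K(i)$ as the minimal Chern--Simons filtration level supporting the relevant class yields $\Gamma_K(i) \le 2\kappa_{min}(W, D, c)$.

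The main obstacle is the identification in the third step: proving that the lowest-filtration term of the cobordism map is exactly the signed reducible count $\eta$, with no cancellation coming from the full (rather than leading-order) moduli space and no contribution from irreducibles at or below the minimal energy. This requires the gluing theory and transversality for singular instantons on negative-definite cobordisms together with careful orientation bookkeeping to pin down the signs $(-1)^{\mu(z)}$ and the monopole weights $T^{\nu(z)}$; this is the technical heart of the Daemi--Scaduto machinery and the part I expect to be most delicate.
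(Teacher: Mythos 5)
First, the point of comparison is unusual here: the paper does not prove this statement at all --- it is imported verbatim as Proposition 4.33 of \cite{daemi20}, and the surrounding text only sets up the definitions of $\kappa_{min}$, $\Phi_{min}(W,D,c)$ and $\eta(W,D,c)$. So the relevant benchmark is the Daemi--Scaduto proof itself, and your outline does reconstruct its architecture correctly: puncturing $D$ to obtain a negative-definite cobordism of pairs $(W,D)\colon (S^3,U)\to(S^3,K)$, a cobordism map on the equivariant singular instanton complex that respects the Chern--Simons filtration, reducibles parameterized by the lattice $\mathbb{Z}^n$ with topological energy $\sum_i\bigl(z_i+\frac{a_i}{4}-\frac{c_i}{2}\bigr)^2$, the leading term of the map identified with the signed, $T$-weighted count $\eta(W,D,c)$, and an index-theoretic grading shift equal to $i$. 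This is a faithful road map to the cited argument.

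As a proof, however, it has a genuine gap, and it is the one you flag yourself: the identification of the bottom-filtration component of the morphism with $\eta(W,D,c)$, the transversality and orientation bookkeeping behind the signs $(-1)^{\mu(z)}$ and weights $T^{\nu(z)}$, and the singular ASD index computation yielding $i=4\kappa_{min}-\frac{1}{4}\sum a_i^2-\frac{1}{2}\sigma(K)$ are all asserted rather than established, and together these constitute essentially the entire content of Proposition 4.33. Moreover, one step would not survive as stated: the claim that negative-definiteness rules out irreducible ASD connections below the minimal reducible energy is too strong. Low-energy irreducibles are not excluded outright; what the Daemi--Scaduto machinery actually uses is that the moduli spaces contributing to the relevant component of the morphism at energy $\kappa_{min}$ are zero-dimensional and, by an index/dimension count combined with negative-definiteness, consist exactly of the minimal reducibles, which are isolated and unobstructed. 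So your proposal is best read as citation-with-commentary rather than an independent argument; within this paper the honest ``proof'' of the proposition is precisely the reference to \cite{daemi20}.
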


\begin{proposition}\label{applydaemi}
Let $K$ be a knot such that $\sigma(K) \le 0$. Suppose $K$ bounds a disk $D$  such that $[D] = (\,\underbrace{2,\cdots, 2}_{p}\: , \underbrace{1,\cdots, {1}}_{q}\:, 0 \cdots, 0)$ in $\CPbar{n}$ with $p,q\ge 0$ and $n \ge p+q$. Then, $$\Gamma_K\left(-\frac{1}{2}\sigma(K) \right) \le \frac{p}{2} + \frac{q}{8}=\frac{k}{8}.$$ 
\end{proposition}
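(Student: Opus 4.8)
The plan is to apply Proposition \ref{prop: daemi20} directly to the given disk $D$ with the trivial cohomology class $c = 0$, so that the asserted inequality is exactly the output $\Gamma_K(i)\le 2\kappa_{min}(W,D,0)$. Everything then reduces to three computations: evaluating $\kappa_{min}(W,D,0)$, checking that the index $i$ equals $-\tfrac12\sigma(K)$ and is therefore nonnegative, and verifying that the signed count $\eta(W,D,0)$ is nonzero in $\mathbb{Z}[T^{\pm}]$.

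The first two are routine because the energy functional separates over coordinates. With $c=0$, each summand $\bigl(z_i+\tfrac{a_i}{4}\bigr)^2$ is minimized independently: a coordinate with $a_i=2$ contributes $\tfrac14$ (attained at $z_i\in\{0,-1\}$), a coordinate with $a_i=1$ contributes $\tfrac1{16}$ (at $z_i=0$), and a zero coordinate contributes $0$. Summing gives $\kappa_{min}(W,D,0)=\tfrac{p}{4}+\tfrac{q}{16}$, so $2\kappa_{min}=\tfrac{p}{2}+\tfrac{q}{8}=\tfrac{k}{8}$, matching the claimed bound. For the index I would substitute $\sum_i a_i^2 = 4p+q = k$ into $i=4\kappa_{min}-\tfrac14\sum a_i^2-\tfrac12\sigma(K)$; the first two terms cancel, leaving $i=-\tfrac12\sigma(K)$, which is $\ge 0$ precisely because $\sigma(K)\le 0$. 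This is also why the hypothesis $\sigma(K)\le 0$ is needed.

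The step that actually requires care — and the main obstacle — is showing $\eta(W,D,0)\neq 0$. Here I would parametrize the minimizers: an element of $\Phi_{min}(W,D,0)$ is free to choose $z_i\in\{0,-1\}$ on each of the $p$ coordinates where $a_i=2$, and is forced to have $z_i=0$ elsewhere. Indexing such a minimizer by the subset $S$ of $\{a_i=2\}$-coordinates on which $z_i=-1$, one computes $\mu(z)=-|S|$ and $\nu(z)=4|S|$, so the contribution of $z$ to $\eta$ is $(-1)^{|S|}T^{4|S|}$. Summing over all subsets of the $p$ relevant coordinates collapses the signed count to $\eta(W,D,0)=\sum_{m=0}^p \binom{p}{m}(-1)^m T^{4m}=(1-T^4)^p$, which is a nonzero element of $\mathbb{Z}[T^{\pm}]$ for every $p\ge 0$. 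The risk in this step is a cancellation that kills $\eta$; the binomial collapse shows this does not happen. With $\eta\neq 0$ and $i\ge 0$ both confirmed, Proposition \ref{prop: daemi20} yields $\Gamma_K\bigl(-\tfrac12\sigma(K)\bigr)\le 2\kappa_{min}=\tfrac{p}{2}+\tfrac{q}{8}=\tfrac{k}{8}$, as desired.
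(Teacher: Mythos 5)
Your proposal is correct and follows essentially the same route as the paper: both apply Proposition \ref{prop: daemi20} with $c=0$, compute $\kappa_{min}=\frac{p}{4}+\frac{q}{16}$ coordinatewise with minimizers $z_i\in\{0,-1\}$ on the $a_i=2$ coordinates and $z_i=0$ elsewhere, verify $\eta=(1-T^4)^p\neq 0$ and $i=-\frac{1}{2}\sigma(K)\ge 0$. Your write-up actually supplies slightly more detail than the paper (the explicit index cancellation $4\kappa_{min}=\frac{1}{4}\sum a_i^2=\frac{k}{4}$ and the binomial collapse of the signed count), but the argument is identical in substance.
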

\begin{proof}
Denote $\CPbar{n}$ by $X$. Suppose $K$ bounds such a disk $D$. Let $c=0\in H^2(X;\mathbb{Z})$. Then, the minimal topological energy is
$$\kappa_{min}(X, D)=\min\left\{\sum_{i=1}^p\left (z_i + \frac{1}{2}\right )^2 + \sum_{i=p+1}^{p+q}\left(z_i + \frac{1}{4}\right)^2 + \sum_{i=p+q+1}^n z_i^2 \,\right\} = \frac{p}{4} + \frac{q}{16},$$
and the minimum is achieved at $$\Phi_{min}(X, D)=\{(z_1,\cdots,z_p,0,\cdots, 0)|\: z_i\in\{0,-1\}\}.$$
Moreover, we compute that $\eta(X,D) =\left (1-T^4\right)^p$ is nonzero in $\mathbb{Z}[T^{\pm}]$ and $i = -\frac{1}{2}\sigma(K)$. We deduce from Proposition \ref{prop: daemi20} that
$$\Gamma_K\left(-\frac{1}{2}\sigma(K) \right) \le \frac{p}{2} + \frac{q}{8}=\frac{k}{8}.$$
\end{proof}
\begin{example}\label{ex: 7495}
Consider $7_4$ and $9_5$, both of which are double twists knots with signature $\sigma = -2$. The knot $7_4$ is the double twist knot $D_{2,2}$ and the knot $9_5$ is the double twist knot $D_{2,3}$. \cite[Corollary 3.24]{daemi20} computes that $$\Gamma_{D_{m,n}}(1) = \frac{(2m-1)(2n-1)}{4mn-1}.$$ 

In particular, we have that $\Gamma_{7_4}(1) =\frac{3}{5}$ and $\Gamma_{9_5}(1)=\frac{15}{23}$. Since $\Gamma_{7_4}(1)$ is greater than $\frac{1}{2}$, it follows from Proposition \ref{applydaemi} that $7_4$ cannot bound a disk of class $(1,0,\cdots, 0)$, $(1,1,0,\cdots, 0)$, $(1,1,1,0,\cdots, 0)$, $(1,1,1,1,0,\cdots, 0)$ or $(2,0,0\cdots, 0)$. Thus, we get that $sd_+(7_4)\ge 5$. 

Similarly, since $\Gamma_{9_5}(1)$ is greater than $\frac{1}{2}$, we have that $sd_+(9_5)\ge 5$. Moreover, since $\Gamma_{9_5}(1)=\frac{15}{23} > \frac{5}{8}$, the knot $9_5$ cannot bound a disk of class $(1,1,1,1,1,0,\cdots, 0)$ or $(2,1, 0,\cdots, 0)$. Thus, we conclude that $sd_+(9_5)\ge 6$.
\end{example}

\begin{remark}
Note that $s(7_4)=s(9_5)=2$. By Proposition \ref{stau}, we only get that $sd_+(7_4) \ge 4$ and $sd_+(9_5) \ge 4$. Moreover, since both $7_4$ and $9_5$ are alternating with $\tau = 1$, Proposition \ref{vs:thin} implies that they have the same $\{V_s\}_{s \ge 0}$ as the right-handed trefoil. Thus, the lower bound we get from the knot Floer homology is also $sd_+(7_4) \ge 4$ and $sd_+(9_5) \ge 4$. Hence, Example \ref{ex: 7495} shows that the invariants $\Gamma_K(s)$ can provide stronger lower bounds than the $s$-invariant or the invariants $\{V_s\}_{s\ge 0}$. 
\end{remark}

\begin{example}\label{910}
The knot $9_{10}$ is the $2$-bridge knot $S(33,10)$ with $\sigma = -4$. Suppose $K$ bounds a disk in homology class $(2,2,0, \cdots, 0)$. By Proposition \ref{applydaemi}, we should have that $$\Gamma_K(2) \le 1.$$ However, one can compute that $\Gamma_{9_{10}}(2)=\frac{36}{33}>1$. Combine with the fact that $s(9_{10})=4$, we conclude that $sd_+(9_{10})\ge 9$.
\end{example}
%%%%%%%%%%%%%%%%%%%%%%%%%%%%%%%%%%%%%%%%%%%%%%%%%%%%%%%%%%%%%%%%%%%%%%%%%%%%%%%%%%%%%%%%%%%%%%%%%%%%%%%%%%%%%%%%%%%%%%%%%%%%

\section{k-friends}\label{friends}
Two knots $K$ and $K'$ are called $k$-friends if their $k$-surgeries are homeomorphic. The obstructing methods in Section \ref{lowb} are not sensitive to distinguishing consecutive sliceness. For instance, consider a knot $K$ with slicing degree $sd_+(K)=1,2$ or $3$. Then, $s(K)\le 0$ by Theorem \ref{prop:sinv}, and $V_0\le 0$ by Theorem \ref{lem:V_s}. Although $\Gamma_K(s)$ has the potential to distinguish between $1,2$ and $3$-sliceness, $\Gamma_K(s)$ is hard to compute in general.

However, as seen in the proof of \cite[Theorem 1.5]{Qin}, we can obstruct $k$-sliceness by finding a $k$-friend for $K$, when the knot invariants of $K$ itself do not obstruct $k$-sliceness directly. 

\begin{definition}
Let $L=\{(R,r),(B,0),(G,0)\}$ be a $k$-special RBG link such that $R$ is $r$-slice in some $\#^{n}{\mathbb{CP}^2}$,and denote the knot pair associated to $L$ by $(K,K')$. Then, $L$ is called a {\em $k$-special friendship} between $K$ and $K'$. Furthermore, we say that $K$ and $K'$ are {\em $k$-special friends}.
\end{definition}
Under this terminology, we rephrase \cite[Theorem 1.4(a)]{Qin} as follows.
\begin{theorem}\label{thm: kfriend}
Suppose $K$ has a $k$-special friend $K'$. If $s(K') > k-\sqrt{k}$, then $sd_+(K)> k$.
\end{theorem}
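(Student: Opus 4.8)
The plan is to argue by contraposition: I would assume $sd_+(K) \le k$ and deduce $s(K') \le k - \sqrt{k}$, contradicting the hypothesis. The first step is a monotonicity observation that turns the conclusion $sd_+(K) > k$ into a clean statement about $k$-sliceness. If $K$ bounds a disk of self-intersection $-j$ in $\CPbar{N}\punc$ with $j \le k$, then taking the boundary connected sum (away from $K$) with the standard disk bounded by an unknot of class $(1,\dots,1)$ in $\CPbar{k-j}\punc$ — each summand being a $(-1)$-disk in a punctured $\CPbone$ — produces a disk for $K \# U = K$ of self-intersection $-(j+(k-j)) = -k$. Hence $K$ is $k$-slice whenever $sd_+(K) \le k$, and it suffices to show that $k$-sliceness of $K$ forces $s(K') \le k - \sqrt{k}$.

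The heart of the argument is the $k$-special friendship, which is precisely the content of \cite[Theorem 1.4(a)]{Qin}, so I would import the transfer rather than reprove it. Starting from a disk of self-intersection $-k$ bounded by $K$, one excises the $k$-trace of $K$ and reglues the $k$-trace of $K'$ along the surgery homeomorphism $S^3_k(K) \cong S^3_k(K')$ supplied by the $k$-RBG link, using the $r$-slice disk for the red component $R$ in $\#^{n}\mathbb{CP}^2$ to cap off $R$ during the reconstruction. Tracking orientations and homology classes, this yields a disk $D'$ bounded by $K'$ of self-intersection $-k$ in some $\CPbar{m}$; that is, $K'$ is again $k$-slice. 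Writing $[D']=(a_1,\dots,a_m)$ with $\sum_{i=1}^m a_i^2 = k$ and applying Theorem \ref{prop:sinv} with $p=0$ gives $s(K') \le \sum a_i^2 - \sum |a_i| = k - \sum|a_i|$. Since $\left(\sum|a_i|\right)^2 = \sum a_i^2 + 2\sum_{i<j}|a_i||a_j| \ge \sum a_i^2 = k$, we have $\sum|a_i| \ge \sqrt{k}$, and therefore $s(K') \le k - \sqrt{k}$, the desired contradiction.

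The main obstacle is the transfer step in the second paragraph: it is the only place where the full $k$-RBG machinery is essential, namely the trace-embedding description of $k$-sliceness, the surgery homeomorphism coming from the $k$-special structure, and in particular the hypothesis that $R$ is $r$-slice in the positive-definite $\#^{n}\mathbb{CP}^2$, which is exactly what allows the red component to be capped so that the reconstructed ambient manifold is again a connected sum of $\overline{\mathbb{CP}^2}$'s with $[D']^2 = -k$. Everything surrounding it — the monotonicity reduction, the choice of characteristic coefficients realizing $\sum|a_i|$, and the elementary inequality $\sum|a_i|\ge\sqrt{k}$ — is routine bookkeeping built on the adjunction inequality of Theorem \ref{prop:sinv} already in hand.
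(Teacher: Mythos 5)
Two remarks, one structural and one substantive. Structurally, the paper contains no self-contained proof of Theorem \ref{thm: kfriend}: it is introduced verbatim as a rephrasing of \cite[Theorem 1.4(a)]{Qin}, so your decision to cite that result for the core transfer is consistent with what the paper itself does. Your surrounding scaffolding is also correct: the padding trick (boundary-summing with $(1)$-classed disks for the unknot in punctured copies of $\CPbone$) legitimately upgrades $sd_+(K)\le k$ to ``$K$ is $k$-slice,'' and the numerics are exactly right --- if $K'$ bounds a disk of class $(a_1,\dots,a_m)$ with $\sum a_i^2=k$ in a genuine $\CPbar{m}\punc$, then Theorem \ref{prop:sinv} gives $s(K')\le k-\sum|a_i|\le k-\sqrt{k}$, which is precisely where the threshold $k-\sqrt{k}$ comes from.

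The substantive problem is your account of the transfer itself. Excising the trace of $K$ from the slicing of $K$ and regluing the trace of $K'$ along the surgery homeomorphism $S^3_k(K)\cong S^3_k(K')$ produces a closed, smooth, negative-definite $4$-manifold $X'$ that is only \emph{homeomorphic} to $\CPbar{m}$ (by \cite{Freedman}, after diagonalizing the form); no amount of ``tracking orientations and homology classes'' upgrades this to a diffeomorphism, and Theorem \ref{prop:sinv} is a statement about the standard smooth structure. If this step were routine, the entire motivating strategy of the paper and of \cite{MP21}, \cite{Qin} --- detecting exotic $\CPbar{n}$'s from exactly this cut-and-paste, e.g.\ \cite[Theorem 1.5]{Qin} for $K_B(3)$ --- would be vacuous, since the $s$-invariant bound would hold automatically in the reglued manifold. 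This is also why the lemma at the end of Section \ref{friends} concludes from a bare $k$-friendship only that $K'$ is $k$-slice in \emph{some simply-connected negative-definite} $4$-manifold, where the $\{V_s\}$ and $\Gamma_K$ obstructions apply but the $s$-invariant does not. The hypothesis that $R$ is $r$-slice in $\#^{n}\mathbb{CP}^2$ is what lets \cite{Qin} replace the abstract regluing by a concrete handle construction landing in an honest $\CPbar{M}$; note it is a restrictive hypothesis (a disk of self-intersection $-r$ in a positive-definite manifold forces $r\le 0$, matching the restriction to $m\in\{0,1,2\}$, i.e.\ $r=m-2\le 0$, in the paper's application, while for $m=3$, where $r=1$, the transfer fails and one instead gets the exotica candidates of \cite[Theorem 1.5]{Qin}). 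You do flag the $R$-capping as essential, but your sketch presents the homeomorphism-level cut-and-paste as if it already yielded a standard $\CPbar{m}$, and that is the one step that cannot be waved through: as a pure citation of \cite[Theorem 1.4(a)]{Qin} your argument matches the paper, but as a proof sketch it has a genuine gap exactly at the diffeomorphism-versus-homeomorphism distinction.
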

\begin{proposition}
Define $K_B(m)$ by the knot diagram to the left of Figure \ref{KBmKG}. Then,  
$$sd_+(K_B(m))=m+1,$$
for $m\in\{-1,0,1,2\}$.
\end{proposition}
\begin{proof}
Fix $m\in \{0,1,2\}$. Notice that $K_B(m)$ is the knot $K_B(-2,1,m;*)$ in \cite[Figure 14]{Qin}. Let $L(m)$ be the $m$-special RBG link $L(-2,1,m; m-2)$ in \cite[Figure 13]{Qin}. From \cite[Figure 14]{Qin}, we can see that $K_G(-2,1,m; m-2)$ has a knot diagram as in the right frame in Figure \ref{KBmKG}, which does not depend on $m$. Denote the knots $K_G(-2,1,m; m-2)$ by $K_G$. Since the $R$-component of $L(m)$ is the unknot and $m\in \{0,1,2\}$, we have that $R$  is $(m-2)$-slice in $\#^{(2-m)} {\mathbb{CP}^2}$. Thus, $K_G$ is an $m$-special friend of $K_B(m)$. KnotJob \cite{KnotJob} computes that $s(K_G) = 2$, which is greater than $m - \sqrt{m}$. By Theorem \ref{thm: kfriend}, we deduce that $$sd_+(K_B(m))> m.$$ 

On the other hand, the knot $K_B(-1)$ is recognized by SnapPy \cite{SnapPy} as $K11n139$, which is also known as the pretzel knot $P(5, 3, -3)$. Theorem 1.1 in \cite{slicep} implies that the knot $P(5, 3, -3)$ is ribbon. Thus, we have that $sd_+(K_B(-1))=0$. Notice that $K_B(m)$ can be obtained from $K_B(-1)$ by adding $(m+1)$-many positive full-twists at the twist box labelled by $c$ in \cite[Figure 14]{Qin}. Since the algebraic intersection number of twisting box $c$ is one, we have that $K_B(m)$ bounds a disk with homology class $(1,\cdots, 1)$ in $\#^{(m+1)} \overline{\mathbb{CP}^2}$, which is a $(m+1)$-slice disk for $K_B(m)$.

Thus, we conclude that $$sd_+(K_B(m))=m+1,$$
for $m\in\{-1,0,1,2\}$.
\end{proof}
\begin{figure}[h]
{
   \fontsize{9pt}{11pt}\selectfont
   \def\svgwidth{4in}
   \begin{center}
   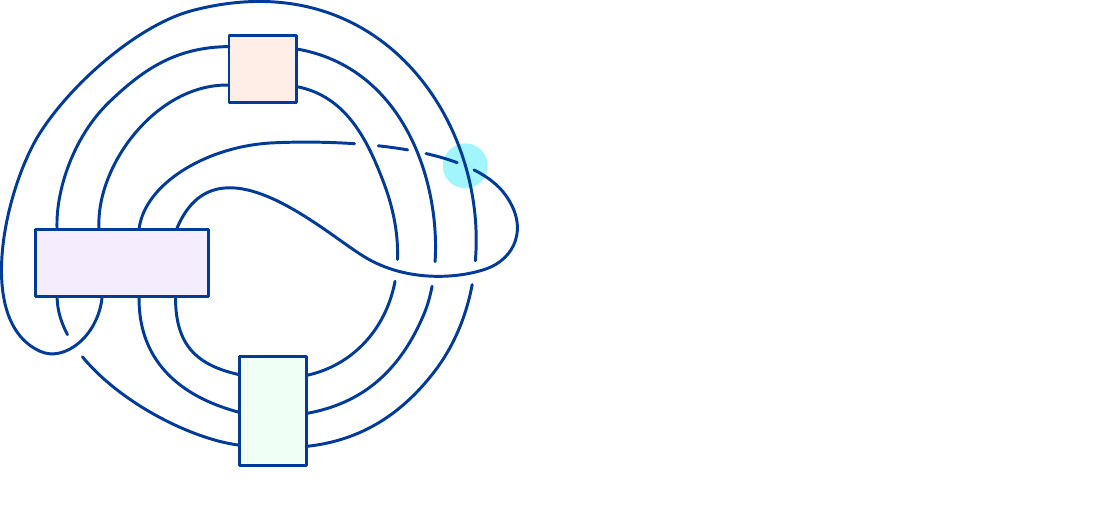
   \end{center}
   \caption{}
   \label{KBmKG}
}
\end{figure}
\begin{remark}\label{rm:942}
The knot $K_B(0)$ is the knot $9_{42}$ by SnapPy \cite{SnapPy}. We will see an alternative proof of $sd_+(9_{42}) = 1$ in Proposition \ref{prop:942} below.
\end{remark}
Notice that the obstructions from Heegaard Floer invariants and singular instanton Floer invariants apply to any simply-connected negative-definite 4-manifold. Therefore, we can combine those invariants with $k$-friendships to obstruct $k$-sliceness of a knot $K$.
\begin{lemma} 
Let $K'$ be a $k$-friend of $K$. If $K'$ is not $k$-slice in any simply-connected negative-definite 4-manifold, then $sd_+(K)>k$.
\end{lemma}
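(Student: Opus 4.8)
The plan is to argue by contraposition: assuming $sd_+(K)\le k$, I will build a simply-connected negative-definite $4$-manifold in which $K'$ is $k$-slice, contradicting the hypothesis. The first step is a reduction via monotonicity of $k$-sliceness, which I would establish directly: if $K$ bounds a disk of self-intersection $-j$ in $\CPbar{n}$, then blowing up once and adding a positive full twist on a single strand (a kink, which does not change the knot type) produces a disk of self-intersection $-(j+1)$ in $\CPbar{n+1}$, its homology class acquiring one more coordinate equal to $1$. Iterating and using $sd_+(K)\le k$, I obtain a disk $D$ of self-intersection $-k$ for $K$ inside $W:=\CPbar{n}$ for some $n$; it therefore suffices to show that whenever $K$ is $k$-slice in a simply-connected negative-definite $W$ and $K'$ is a $k$-friend, $K'$ is $k$-slice in some simply-connected negative-definite manifold.

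The core of the argument is a cut-and-paste across the $k$-surgery homeomorphism. As recalled before Proposition \ref{thm:dinv}, the disk exterior $E(D)=W^{\circ}\setminus\nu(D)$, with $W^{\circ}=W\punc$, is negative-definite with $b_1=0$ and $\partial E(D)\cong S^3_k(K)$. Let $\phi\colon S^3_k(K)\to S^3_k(K')$ be the homeomorphism supplied by the $k$-friend hypothesis, and form the closed $4$-manifold $W'=E(D)\cup_\phi\bigl(-X_k(K')\bigr)$ by gluing $E(D)$ to the orientation-reversed trace of $K'$ along $\phi$. The reversed trace $-X_k(K')$ contains the core of its $2$-handle, a properly embedded disk of self-intersection $-k$ bounding $K'$; deleting the interior of the ball used to build the trace realizes this as a disk in $(W')\punc$, so $K'$ is $k$-slice in $W'$. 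It then remains only to verify that $W'$ is negative-definite and simply-connected.

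Negative-definiteness I would extract from Novikov additivity: since $S^3_k(K')$ is a rational homology sphere for $k\ge 1$, one has $b_2(W')=b_2(E(D))+b_2\bigl(-X_k(K')\bigr)$, while $\sigma(W')=\sigma(E(D))+\sigma\bigl(-X_k(K')\bigr)=-b_2(E(D))-1=-b_2(W')$, using that $E(D)$ is negative-definite and that $-X_k(K')$ has intersection form $(-k)$. Together with $b_1(W')=0$ this forces $b_2^+(W')=0$. For simple-connectivity, the decisive observation is that the inclusion $\partial E(D)\hookrightarrow E(D)$ is literally the same whether one glues in the trace of $K$ (recovering $W=\CPbar{n}$) or the trace of $K'$ (producing $W'$); only the simply-connected trace summand is changed. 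By van Kampen, both $\pi_1(W)$ and $\pi_1(W')$ equal $\pi_1(E(D))$ modulo the normal closure of the image of $\pi_1\bigl(S^3_k(K)\bigr)$, and since $\pi_1(W)=1$ this normal closure is already all of $\pi_1(E(D))$; hence $\pi_1(W')=1$ as well. Thus $W'$ is a simply-connected negative-definite $4$-manifold in which $K'$ is $k$-slice, contradicting the hypothesis, and so $sd_+(K)>k$.

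I expect the main obstacle to be controlling the global topology of $W'$ after the cut-and-paste: a priori, gluing two negative-definite pieces along a rational homology sphere need be neither negative-definite nor simply-connected. Both pathologies are ruled out by the fact that the surgery homeomorphism alters only the definite, simply-connected trace factor while leaving $E(D)$ and its boundary inclusion fixed, so the signature count and the van Kampen computation transfer verbatim from $W$ to $W'$. Making this transfer rigorous—in particular pinning down that the amalgamating subgroup in the van Kampen presentation is independent of $\phi$—is where the real content of the proof lies; the negative-definiteness, by contrast, is a routine consequence of Novikov additivity once the rational-homology-sphere boundary is in hand.
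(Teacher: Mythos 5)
Your proof is correct and takes essentially the same route as the paper: the paper's proof is the identical contrapositive, with your entire cut-and-paste construction --- gluing the disk exterior $E(D)$ to the reversed trace $-X_k(K')$ along the surgery homeomorphism, then checking negative-definiteness via Novikov additivity and simple-connectivity via van Kampen (where the quotient kills all of the image of $\pi_1(\partial E(D))$ since both traces are simply-connected, so the gluing map $\phi$ is indeed irrelevant) --- outsourced to \cite[Propositions 5.3 and 5.4]{Qin}. The one caveat is that your Novikov/$b_2$-additivity step uses that $S^3_k(K')$ is a rational homology sphere, so as written your argument covers $k\ge 1$ only; the $k=0$ case nominally allowed by the statement would need a separate word, though it is not the regime in which the lemma is applied.
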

\begin{proof}
Suppose the slicing degree of $K$ is less or equal to $k$. Then, $K$ is $k$-slice in some $\#^n\overline{\mathbb{CP}^2}$. By \cite[Proposition 5.3]{Qin} and \cite[Proposition 5.4]{Qin}, since $K'$ is a $k$-friend of $K$, we have that $K'$ is $k$-slice in some simply-connected negative-definite manifold. Contradiction.
\end{proof}

%%%%%%%%%%%%%%%%%%%%%%%%%%%%%%%%%%%%%%%%%%%%%%%%%%%%%%%%%%%%%%%%%%%%%%%%%%%%%%%%%%%%%%%%%%%%%%%%%%%%%%%%%%%%%%%%%%%%%%%%%%%%
\section{Small knots}\label{small}
In this section, we apply the methods mentioned so far and compute the slicing degrees of most knots with crossing number up to $8$ in Table \ref{08} and the slicing degrees of most knots with crossing number $9$ in Table \ref{09}. 

Note that Rolfsen's table classifies knots up to mirror. Here, our convention is that for knots with non-zero signatures, the knots in Tables \ref{08} and \ref{09} are chosen to be ones with negative signature. Among all the knots with $\sigma = 0$, all except for the knot $8_{18}$, are shown to be BPH-slice knots in \cite[Section 2]{MP21}. Since $sd_+(K) = sd_+(m(K)) = 0$ for BPH-slice knots, it does not matter which one we picked.  The knot $8_{18}$ is amphicheiral, so it does not matter which one we picked either.

By \cite[Example 2.7]{MP21}, there are $24$ BPH-slice prime knots with crossing number at most $9$ (including the unknot). The following example gives the answer for $40$ of the knots in Table \ref{08} and Table \ref{09}.
\begin{example}
If $s(K)=2$ and $c_s(K)=1$, then $sd_+(K) = 4$. If $s(K)=4$ and $c_s(K)=2$, then $sd_+(K) = 8$. 
\end{example}
For knots with larger $s$-invariant, we use the obstruction given by Theorem \ref{thm:t22m+1}.
\begin{example}
If $K$ is an alternating knot with $\tau(K)=3$ and $c_s(K)=3$, then $sd_+(K) = 12$. For example, $K$ can be $7_1$, $9_3$, $9_6$, $9_9$ or $9_{16}$.
\end{example}

\begin{example}
The knot $9_1$ is the torus knot $T_{2,9}$. By Theorem \ref{thm:t22m+1} with $m=4$, we have that $sd_+(9_{1}) = 16$. The knot $8_{19}$ is the torus knot $T_{3,4}$. By Proposition \ref{prop:tmm+1}, we have that $sd_+(8_{19}) = 9$.
\end{example}
Although it suffices to look at the clasp number for most of small knots, we still need to keep track of the sign of the crossing changes for some. For example, since the knot $8_{18}$ can be unknotted by changing one positive crossing and one negative crossing, its slicing degree is at most $4$. Similarly, for knots with non-vanishing $s$-invariants (or $\tau$-invariant), we have the following.
\begin{example}\label{+casp}
If $s(K)=2$ (or $\tau(K)=1$) and $K$ becomes a $BPH$-slice knot by changing one positive crossing, then $sd_+(K) = 4$ and $sd_+(m(K))=0$. For example, $K$ can be $8_{16}$, $9_{15}$, $9_{17}$, ${9_{31}}$, $9_{40}$. 
\end{example}

The knot $9_{42}$ as mentioned in Remark \ref{rm:942} has slicing degree one. The following proposition provides an alternative proof of this fact.
\begin{proposition}\label{prop:942}
The slicing degree of the knot $9_{42}$ is one.
\end{proposition}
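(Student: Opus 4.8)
The plan is to show both inequalities $sd_+(9_{42})\le 1$ and $sd_+(9_{42})\ge 1$. For the upper bound, I would exhibit an explicit disk of self-intersection $-1$ in some $\CPbar{n}$. Recall from Remark~\ref{rm:942} that $9_{42}=K_B(0)$, and from the proof of the preceding proposition that $K_B(-1)$ is the ribbon knot $P(5,3,-3)$, while $K_B(0)$ is obtained from $K_B(-1)$ by adding one positive full twist at a box of algebraic intersection number one. This already produces a $1$-slice disk of homology class $(1)$ in a single $\CPbone$, giving $sd_+(9_{42})\le 1$. The substance of the proposition is therefore the lower bound: one must rule out $sd_+(9_{42})=0$, i.e.\ obstruct $9_{42}$ from being $H$-slice in every $\CPbar{n}$.

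For the lower bound I would first check that the classical concordance invariants do \emph{not} suffice: $9_{42}$ has $s=0$ and $\tau=0$ (it is a thin/alternating-type knot with vanishing signature-adjacent invariants), so Theorem~\ref{prop:sinv} and Theorem~\ref{lem:V_s} applied to $9_{42}$ itself only give $sd_+\ge 0$. This is precisely the situation flagged at the start of Section~\ref{friends}, where the invariants of $K$ alone cannot detect the difference between $0$- and $1$-sliceness. Accordingly, the natural route is the $k$-friendship machinery: I would invoke Theorem~\ref{thm: kfriend} (or the homology-class-sensitive Lemma at the end of Section~\ref{friends}) with $k=0$, producing a $0$-special friend $K'$ of $9_{42}$ and showing $s(K')>0-\sqrt{0}=0$, which forces $sd_+(9_{42})>0$, hence $sd_+(9_{42})\ge 1$.

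The key steps in order are: (i) identify $9_{42}$ as $K_B(0)$ and import the $0$-special friendship $L(0)$ from the earlier proof, whose $G$-component $K_G$ was already computed to have $s(K_G)=2$; (ii) apply Theorem~\ref{thm: kfriend} with $k=0$, noting $s(K_G)=2>0=0-\sqrt{0}$, to conclude $sd_+(9_{42})>0$; (iii) combine with the $(1)$-class slice disk from step one to pin the value at exactly $1$. I expect the main obstacle to be confirming that the $k$-special friendship used for $m\in\{0,1,2\}$ in the previous proposition specializes correctly at $m=0$ to a genuine $0$-special friendship with $R$ being $(-2)$-slice in $\#^2\mathbb{CP}^2$, and that the hypothesis $s(K')>k-\sqrt k$ is read with the convention $\sqrt{0}=0$; once that bookkeeping is verified the argument closes immediately, so the proposition is essentially a clean $k=0$ instance of the friendship obstruction rather than requiring any new computation.
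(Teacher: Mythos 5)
Your argument is correct, but it is not the paper's proof --- indeed Remark \ref{rm:942} explicitly frames Proposition \ref{prop:942} as an \emph{alternative} to exactly the route you take. Your lower bound is the $m=0$ case of the preceding proposition: the $0$-special friendship $L(0)$, the friend $K_G$ with $s(K_G)=2>0=0-\sqrt{0}$, and Theorem \ref{thm: kfriend}; your upper bound likewise reuses the class-$(1)$ disk obtained by positively twisting the ribbon knot $K_B(-1)=P(5,3,-3)$. All of this goes through (Theorem \ref{thm: kfriend} is not restricted to $k>0$, and $R$ is indeed $(-2)$-slice in $\#^{2}\mathbb{CP}^2$), so your proof is valid given the earlier proposition --- but it is a specialization of it rather than independent content. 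The paper's proof is shorter and more elementary on both sides: for the upper bound, $9_{42}$ is obtained by adding a positive twist to the Mazur pattern, hence is $1$-slice in a single $\CPbone$; for the lower bound, $\sigma(9_{42})=-2$ already obstructs $H$-sliceness in every $\CPbar{n}$, with no RBG machinery and no KnotJob computation. This also corrects a misstatement in your write-up: $9_{42}$ is neither thin nor alternating, and its signature does \emph{not} vanish, so it is false that the invariants of $9_{42}$ alone cannot separate $0$- from $1$-sliceness here --- $s$, $\tau$ and the $V_s$ give nothing, but the classical signature does, and that is precisely what the paper exploits. The trade-off: your friendship argument is the one that generalizes to $m=1,2$, where the signature obstruction is powerless; the paper's signature argument buys a self-contained proof of this one case that is independent of the $k$-special RBG apparatus.
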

\begin{proof}
Since the knot $9_{42}$ can be obtained by adding a positive twist to the Mazur pattern, the knot $9_{42}$ is $1$-slice in {\small $\CPbone$} and thus $sd_+(9_{42})\le 1$. Since the signature of $9_{42}$ is $-2$, the knot $9_{42}$ is not $H$-slice in any $\#^n\overline{\mathbb{CP}^2}$ and therefore $sd_+(9_{42})\neq 0$. 
\end{proof}
\begin{remark}
In \cite{SSS}, Sarkar–Scaduto–Stoffregen constructed a stable homotopy type for odd Khovanov homology, and they defined the concordance invariants  $r_{\pm}^{\alpha}$ and $s_{\pm}^{\alpha}$ using the mod-2 Steenrod algebra for both even and odd Khovanov homotopy type, generalizing \cite[Definition 1.2]{LSsinv} in the case of $\mathbb{Z}/2$-coefficients.  

One can compute using KnotJob \cite{KnotJob} that the invariants $r^{\Sq^1_{\odd}}_+$, $s^{\Sq^1_{\odd}}_+$ and $s^{\Sq^2_{\even}}_+$ evaluate to $2$ for the knot $9_{42}$. This shows that these invariants do not obey the same adjunction inequality as the $s$-invariants in Theorem \ref{prop:sinv}. 
\end{remark}
We are left with $8$ knots (namely $7_4$, $8_{18}$, $9_5$, $9_{10}$, $9_{13}$, $9_{35}$, $9_{38}$ and $9_{49}$), whose slicing degrees are still unknown. 
For the $2$-bridge knots $7_4$, $9_5$ and $9_{10}$, we can use the invariants $\Gamma_K(s)$ to raise the lower bounds as in Example \ref{ex: 7495} and Example \ref{910}.

Finally, one can check that the mirrors of the knots in Table \ref{08}, \ref{09} (except for $8_{18}$) can be unknotted by changing negative crossings to positive crossings, and therefore their slicing degrees are all zero.

\begin{table}
\begin{center}
\begin{tabular}{||c | c || c | c || c | c || c | c ||} 
 \hline 
 $K$ & $sd_+(K)$ & $K$ & $sd_+(K)$ & $K$ & $sd_+(K)$ & $K$ & $sd_+(K)$ \\ [0.5ex] 
 \hline\hline
 $0_1$ & $0$ & $7_2$ & $4$  & $8_4$ & $4$ & $8_{13}$ & $0$ \\ 
 \hline
 $3_1$ & $4$ & $7_3$ & $8$  & $8_5$ & $8$ & $8_{14}$ & $4$\\
 \hline
 $4_1$ & $0$ &$7_4$ & $[5,8]$ & $8_6$ & $4$ & $8_{15}$ & $8$ \\
 \hline
 $5_1$ & $8$ & $7_5$ & $8$ &  $8_7$ & $4$ & $8_{16}$ & $4$ \\
  \hline
 $5_2$ & $4$ & $7_6$ & $4$ &  $8_8$ & $0$ & $8_{17}$ & $0$ \\
  \hline
 $6_1$ & $0$ & $7_7$ & $0$ &  $8_9$ & $0$ & $8_{18}$ & $[0,4]$ \\
  \hline
 $6_2$ & $4$ & $8_1$ & $0$ &  $8_{10}$ & $4$ & $8_{19}$ & $9$ \\
  \hline
 $6_3$ & $0$ & $8_2$ & $8$ &  $8_{11}$ & $4$ & $8_{20}$ & $0$ \\
  \hline
 $7_1$ & $12$ & $8_3$ & $0$ &  $8_{12}$ & $0$ & $8_{21}$ & $4$ \\
 \hline
\end{tabular}
\end{center}
\caption{Slicing degrees of knots up to $8$ crossings}
\label{08}
\end{table}

\begin{table}
\begin{center}
\begin{tabular}{||c | c || c | c || c | c || c | c || c | c || c | c ||  c | c ||} 
 \hline 
 $K$ & $sd_+$ & $K$ & $sd_+$ & $K$ & $sd_+$ & $K$ & $sd_+$ & $K$ & $sd_+$ & $K$ & $sd_+$ & $K$ & $sd_+$\\ [0.5ex] 
 \hline\hline
 $9_1$ & $16$ & $9_8$ & $4$  & $9_{15}$ & $4$ & $9_{22}$ & $4$ & $9_{29}$ & $4$ & $9_{36}$ & $8$ & $9_{43}$ & $8$\\ 
 \hline
 $9_2$ & $4$ & $9_9$ & $12$  & $9_{16}$ & $12$ & $9_{23}$ & $8$ & $9_{30}$ & $0$ & $9_{37}$ & $0$& $9_{44}$ & $0$\\
 \hline
 $9_3$ & $12$ &$9_{10}$ & $[9,12]$ & $9_{17}$ & $4$ & $9_{24}$ & $0$ & $9_{31}$ & $4$ & $9_{38}$ & $[8, 12]$& $9_{45}$ & $4$\\
 \hline
 $9_4$ & $8$ & $9_{11}$ & $8$ &  $9_{18}$ & $8$ & $9_{25}$ & $4$ & $9_{32}$ & $4$ & $9_{39}$ & $4$& $9_{46}$ & $0$\\
  \hline
 $9_5$ & $[6,8]$ & $9_{12}$ & $4$ &  $9_{19}$ & $0$ & $9_{26}$ & $4$ & $9_{33}$ & $0$ & $9_{40}$ & $4$& $9_{47}$ & $4$\\
  \hline
 $9_6$ & $12$ & $9_{13}$ & $[8,12]$ &  $9_{20}$ & $8$ & $9_{27}$ & $0$ & $9_{34}$ & $0$ & $9_{41}$ & $0$& $9_{48}$ & $4$\\
  \hline
 $9_7$ & $8$ & $9_{14}$ & $0$ &  $9_{21}$ & $4$ & $9_{28}$ & $4$ & $9_{35}$ & $[4,8]$ & $9_{42}$ & $1$& $9_{49}$ & $[8,12]$\\
  \hline
\end{tabular}
\end{center}
\caption{Slicing degrees of knots with $9$ crossings}
\label{09}
\end{table}
%%%%%%%%%%%%%%%%%%%%%%%%%%%%%%%%%%%%%%%%%%%%%%%%%%%%%%%%%%%%%%%%%%%%%%%%%%%%%%%%%%%%%%%%%%%%%%%%%%%%%%%%%%%%%%%%%%%%%%%%%%%%
\bibliography{slicing_degree}
\bibliographystyle{custom}

\end{document}